\newcommand{\RR}{\mathbf{R}}
\newcommand{\NN}{\mathbf{N}}
\newcommand{\ZZ}{\mathbf{Z}}
\newcommand{\TT}{\mathbf{T}}
\newcommand{\bM}{\mathbf{M}}
\newcommand{\EE}{\mathbb{E}}
\newcommand{\PP}{\mathbb{P}}
\newcommand{\mL}{\mathcal{L}}
\newcommand{\mN}{\mathcal{N}}
\newcommand{\mC}{\mathcal{C}}
\newcommand{\mM}{\mathcal{M}}
\newcommand{\mF}{\mathcal{F}}
\newcommand{\mX}{\mathcal{X}}
\newcommand{\mS}{\mathcal{S}}
\newcommand{\mB}{\mathcal{B}}
\newcommand{\mA}{\mathcal{A}}
\newcommand{\mH}{\mathcal{H}}
\newcommand{\mf}[1]{\mathfrak{#1}}
\newcommand{\para}{\varolessthan}
\newcommand{\rpara}{\varogreaterthan}
\newcommand{\reso}{\varodot}
\newcommand{\ve}{\varepsilon}
\newcommand{\vt}{\vartheta}
\newcommand*{\ud}{\mathrm{\,d}}
\renewcommand{\div}{\operatorname{div}}
\newcommand{\opnorm}{\@ifstar\@opnorms\@opnorm}
\newcommand{\@opnorms}[1]{%
  \left|\mkern-1.5mu\left|\mkern-1.5mu\left|
   #1
  \right|\mkern-1.5mu\right|\mkern-1.5mu\right|
}
\newcommand{\@opnorm}[2][]{%
  \mathopen{#1|\mkern-1.5mu#1|\mkern-1.5mu#1|}
  #2
  \mathclose{#1|\mkern-1.5mu#1|\mkern-1.5mu#1|}
}
\tikzstyle{terminator} = [rectangle, draw, text centered, rounded corners, minimum height=2em]
\tikzstyle{process} = [rectangle, draw, text centered, minimum height=2em]
\tikzstyle{decision} = [diamond, draw, text centered, minimum height=2em]
\tikzstyle{data}=[trapezium, draw, text centered, trapezium left angle=60, trapezium right angle=120, minimum height=2em]
\tikzstyle{connector}  = [draw, very thick]
\tikzstyle{connectorG} = [draw, green!70!gray, very thick]
\tikzstyle{arrow} = [very thick,->,>=stealth]
\def\CN{\mathcal{N}}
\let\f\frac
\def\tf#1#2{{\textstyle{\frac{#1}{#2}}}}
\newtheorem{assumption}{Assumption}[section]
\DeclareSymbolFont{timesoperators}{T1}{ptm}{m}{n}
\renewcommand{\operator@font}{\mathgroup\symtimesoperators}
\newcommand{\eqdef}{\stackrel{\mbox{\tiny\rm def}}{=}}
\def\fin{\mathrm{fin}}
\def\dash{\leavevmode\unskip\kern0.18em--\penalty\exhyphenpenalty\kern0.18em}
\def\slash{\leavevmode\unskip\kern0.15em/\penalty\exhyphenpenalty\kern0.15em}
\def\sotimes{\mathbin{\otimes_{\!s}}}
\let\f\frac
\def\ft#1#2{\textstyle{\frac{#1}{#2}}}
\def\nablas{\nabla_{\mathrm{sym}}}
\def\scal#1{\langle#1\rangle}
\begin{document}

\title{Global existence for perturbations of the 2D stochastic Navier--Stokes
equations with space-time white noise}

\author{Martin Hairer$^1 $ and Tommaso Rosati$^2$}
\institute{Imperial College London, UK, \email{m.hairer@imperial.ac.uk} 
   \and University of Warwick, UK, \email{t.rosati@warwick.ac.uk} }

\maketitle

\begin{abstract}
   We prove global in time well-posedness for perturbations of the 2D
   stochastic Navier--Stokes equations 
   \begin{equs}
          \partial_t u + u \cdot \nabla u & =  \Delta u - \nabla p + \zeta + \xi
          \;, \qquad u (0, \cdot) = u_{0} \;,\\
          \div (u) & =  0 \;,
   \end{equs}
   driven by additive space-time white noise $ \xi $, with perturbation $ \zeta
$ in the H\"older--Besov space $\mC^{-2 + 3\kappa} $, periodic boundary conditions and initial condition $
   u_{0} \in \mC^{-1 + \kappa} $ for any $ \kappa >0 $. The proof relies on an energy
   estimate which in turn builds on a dynamic high-low frequency decomposition and tools
   from paracontrolled calculus.
   Our argument uses
   that the solution to the linear equation is a $ \log$--correlated field,
   yielding a double exponential growth bound on the solution. Notably, our method does
   not rely on any explicit knowledge of the invariant measure to the SPDE,
   hence the perturbation $ \zeta $ is not restricted to the Cameron--Martin space of the
   noise, and the initial condition may be anticipative. Finally, we introduce
   a notion of weak solution that leads to well-posedness for all initial data $
   u_{0}$ in $ L^{2} $, the critical space of initial conditions.
\end{abstract}

\setcounter{tocdepth}{2}
\tableofcontents

\section{Introduction}
The aim of this article is to study the global in time well-posedness of the 2D
stochastic Navier-Stokes (SNS) equations
\begin{equs}\label{eqn-stochastic-ns-original}
   \partial_t u + u \cdot \nabla u  & =  \Delta u - \nabla p+ \Pi_{\times} (\zeta +
   \xi) \;, \qquad u (0, \cdot) = u_{0} (\cdot)\;, \\
    \div (u) & =  0 \;,
\end{equs}
for $ (t, x) \in [0, \infty) \times \TT^{2} $, with $ \TT^{2} $ the $ 2
$D torus and where \(\xi = (\xi_{i})_{i=1,2}\)
is a two-dimensional space-time white noise, namely a Gaussian generalised random
field which formally has the covariance
\[ \mathbb{E} [\xi_i (t, x) \xi_j (s, y)] = \delta_{t, x} (s, y) \delta_{i,j}\;. \]
The symbol $ \zeta $ denotes a perturbation belonging to $ \mC^{-2 + \kappa} $ (on parabolic space-time) and $
\Pi_{\times} $ is the projection on mean-free
functions $ \Pi_{\times} f = f - \int_{\TT^{2} } f(x) \ud x $ (introduced
   merely for simplicity, as the zero mode decouples from all others).
Our main result shows that there exists a null set $\CN$ such that for all
realisations of the noise $ \xi $ outside $\CN$, any initial condition
$ u_{0} \in L^{2} \cup \mC^{-1 + \kappa} $ and any perturbation $ \zeta \in
C([0, \infty]; \mC^{-2 + \kappa}) $, for arbitrary $ \kappa > 0 $, there exists
a unique solution to \eqref{eqn-stochastic-ns-original} \emph{for all times}.
For initial data in $ L^{2} $, this result requires the introduction of a suitable notion of weak
solution, while for initial data in $ \mC^{-1 + \kappa} $ we consider mild
solutions in the sense of Da Prato and Debussche.

Indeed, in a by now classical work, Da Prato and Debussche
\cite{DaPrato-Debusshe-2DNS-2002} establish the local
well-posedness of \eqref{eqn-stochastic-ns-original} (a similar approach
was taken earlier by Bourgain \cite{Bourgain1996} in a related context),  a first step in the
development of solution theories for singular SPDEs. 
Both the study of local
and global solutions to singular SPDEs has seen enormous progress in recent
years. In particular, with respect to global in time well-posedness we can
highlight at least three different lines of research.
On the one hand, a number of equations, including
\eqref{eqn-stochastic-ns-original} with $ \zeta = 0 $ but also
Burgers' equation and SQG equations, admit an
explicit invariant measure, in our case the Gaussian field
\begin{equ}
X = \int_{-\infty}^{0} P_{t -s} \mathbf{P} \Pi_{\times} \xi \ud s \;, 
\end{equ}
with $ P_{t} $ the
heat semigroup and $ \mathbf{P} $ the Leray projection. 
Explicit knowledge
of the invariant measure $\mu$ allows, under rather weak technical assumptions, 
to deduce global well-posedness for the equation
for $\mu$-almost all initial data \cite{DaPrato-Debusshe-2DNS-2002}, a result that
can be strengthened to all initial data, almost surely (with the null set possibly depending on
the initial condition), if the law of the solution
satisfies the strong Feller property, see for example
\cite[Theorem 4.10]{HairerStuart07SPDEs}
or \cite{zhu2017strong}. 
Approaches building 
on explicit knowledge of the invariant measure
appear to fail in our setting though since, as soon as $ \zeta \not\in
L^{2}_{\mathrm{loc}}([0, \infty) \times \TT^{2} ) $, the Cameron--Martin space
of the noise, the law of the solution has no obvious link
to the law of $ X $.

Second, and very relevant to our setting are a number of recent works by
Hofmanov\`a, Zhu and Zhu \cite{hofmanova2022class, hofmanova2022non, hofmanova2021globalST,
hofmanova2021global}. There, the authors establish global in time existence of
invariant solutions to equations such as the $ 3 $D Navier--Stokes equations
with space time white noise or the SQG equations within certain parameter
ranges. These results build on the equations being
super-critical and on
convex integration tools that allow to construct infinitely many
solutions at once. 

Finally, the results closest to this work both in their methods and in their
motivation concern equations linked to stochastic quantisation. For models
such as $ \Phi^{4}_{d} $ or (Euclidean) Yang--Mills, one aims to give meaning to a probability measure on a
space of distributions given by some formal expression. The idea then is to consider the
corresponding Langevin process (noisy gradient flow), which is typically a singular stochastic PDE.
If one can give a meaning to and then prove well-posedness and unique ergodicity for that SPDE, 
the desired measure can then be \textit{defined} as its (unique) invariant measure.
In the case of the $ \Phi^{4} $ models in any subcritical regime, global well-posedness 
 has been established
\cite{MourratWeber17Phi4,ChandraMoinat19,GubinelliHofmanova21Phi4,GubinelliHofmanova19Global},
making use of the strongly coercive effect of the nonlinearity. In the case of Yang--Mills 
however, while local well-posedness has recently been established
\cite{chandra2022stochastic,chandra2022langevin, chevyrev2022stochastic},
global in time well-posedness remains as challenging as it is interesting.
Morally the Yang--Mills nonlinearity should behave similarly 
to the Navier--Stokes nonlinearity, providing at least heuristically an
energy estimate and not a negative drift as is the case for the $
\Phi^{4}_{d} $ model.

In view of these considerations, establishing global well-posedness 
for equations such as Equation~\eqref{eqn-stochastic-ns-original} is
particularly interesting. Arguably, the drawback of our approach is that the
exact regularity
of $ \xi $ plays a role, and we are not able to rule out finite time explosion
if one consider an even slightly more irregular noise. On the other hand we
provide a pathwise argument for global well posedness: in particular,
no finite time explosion holds for every realisation of the noise outside a
null set, uniformly over all initial conditions and perturbations. In addition
and to he best of our knowledge for the first time,
we establish well-posedness of the equation also for $ L^{2} $ initial data. Of
course, the choice of the initial condition is intuitively a local rather then
a global question, and indeed we expect this part of our result to extend to a
broader class of equations. The link between global well-posedness and
well-posedness for critical initial data is that both rely on an energy
estimate and some kind of weak formulation of the equation.

The technique used in the present work is to introduce a dynamical 
high-low frequency decomposition, which splits the solution into an
irregular, but small, component and a more regular, but arbitrarily large
component. This
is in spirit similar to the approach taken by Gallagher and Planchon
\cite{gallagher2002global} to
establish well-posedness of the deterministic Navier--Stokes equations with
critical initial data and integrability index $ p >2 $ where the 
energy of the initial condition is allowed to be infinite. In our setting, even for
smooth initial data, the $ L^{2} $ norm
of the solution is infinite at any
positive time: this motivates our division of scales, so that our efforts
concentrate towards establishing an energy estimate for the large scale
component of the solution. In the literature on singular SPDEs, similar
decompositions have appeared in the study of $ \Phi^{4}_{d} $ models by Mourrat
and Weber \cite{MourratWeber17Phi4} and in particular also by Gubinelli and
Hofmanovà \cite{GubinelliHofmanova19Global}, where the authors introduce a
time-independent paracontrolled structure similar to ours, in order to
obtain global well-posedness in space. As a matter of fact, an argument with a somewhat similar flavour 
already appears in Nelson's original construction of the $\Phi^4_2$ measure
\cite{Nelson} (see \cite[Chapter 9]{hairer2021introduction} for a modern
account).

Ultimately, to establish the lack of finite time blow-up, we rely on a careful study of
a certain quadratic form linked to a singular operator. The latter requires a
finite, but solution-dependent, logarithmic renormalisation, leading us to the
following (very heuristic) bound:
\begin{equ}
   \partial_{t} \| u_{t} \|\lesssim \log{( \| u_{t}
   \|)} \| u_{t} \| \;,
\end{equ}
for an appropriate norm $ \| \cdot \| $. Hence we obtain a quantitative
estimate with double-exponential growth of the type 
\begin{equ}
   \| u_{t} \| \leqslant \exp (\exp (c_{t} \cdot t)) \;,
\end{equ}
where the quantity $ c_t > 0 $ depends on the noise up to time $ t $, so in
particular the growth estimate is more than double exponential.

Let us conclude with a final remark. The original local well-posedness result
by Da Prato and Debussche did not require any tools from singular SPDEs (paracontrolled 
calculus \cite{GubinelliImkellerPerkowski2015}, regularity structures
\cite{Hairer2014}, etc). However,
both our well-posedness result for critical initial conditions and our global in time
well-posedness result build on the deeper understanding of the fine structure of the
solution provided by these tools. In this instance, we will use
paracontrolled calculus for our analysis.

\subsection*{Acknowledgments}

This article was written in vast majority while TR was employed at
Imperial College London.
Financial support through the Royal Society research professorship of MH, grant number
RP\textbackslash R1\textbackslash 191065, is gratefully
acknowledged.

\subsection*{Notations}

We let $\NN = \{0, 1,2,3,\dots\}$, $ \NN_{+} = \NN \setminus \{ 0 \}$, and $ \ZZ_{*} = \ZZ \setminus \{ 0
\} $. Given a function said to depend on `space' and `time', we will always assume that the spatial variable
takes values in the $ 2 $-dimensional torus  $ \TT^2 =\RR^2 /\ZZ^2$.
Given $d \in \NN_+$ and a vector $v  \in \RR^{d}$ we write $ | v | $
for its Euclidean norm. We identify $\bM^d$, the space of $d \times d$ square
matrices with $\RR^d \otimes \RR^d$ in the usual way, and we set $u \sotimes v = \f12(u\otimes v + v\otimes u )$
Given two topological spaces $X,Y$ we write \(C(X;Y)\) for the space of
continuous functions from \(X\) to \(Y\).
For any \(k, d \in \NN\) if  \(O
\subseteq \RR^d\) we write \(C^{k}(\TT^{2} ; O)\) for the space of $k$ times
differentiable maps \(\varphi \colon \TT^{2} \to O\) (the derivatives
being continuous).
The gradient $\nabla$ and divergence $\div$ are defined as usual and,
for $\varphi \in C^1 (\TT^2 ; \bM^2)$ we set
\[ \mathrm{div} (\varphi) (x) = \Big( \sum_{i = 1}^2 \partial_i \varphi_{i, j} (x)
   \Big)_{j=1,2} \in C (\TT^2 ; \RR^2) . \]
while, for $\varphi \in C^1 (\TT^2 ; \RR^2)$ we define
$\nabla \varphi, \nablas \varphi \in C(\TT^{2}; \bM^{2})$ by
\[ (\nabla \varphi)_{i, j} = \partial_i \varphi_j\;, \qquad (\nablas
   \varphi)_{i,j} = \ft12(\partial_{i} \varphi_{j} + \partial_{j} \varphi_{i}) \;. \]
When its arguments are functions taking values in a Hilbert space, 
$\scal{\cdot,\cdot}$ denotes the corresponding $L^2$-scalar product.
Finally, the function spaces that we will need throughout the paper are 
described in Section~\ref{sec:function-spaces}. Let us merely note
that we write $ \| \varphi \| = \| \varphi \|_{L^{2}}$
for the $ L^{2} $ norm of a function $ \varphi $.

\subsubsection*{Conventions}

When the domain and target space of a function are clear from context, we will
omit them from our notations, writing for instance simply \(C^{k}\) or $L^p$.
Given a set $ \mX $ and two functions $ f, g \colon \mX \to \RR $, we write
\begin{equ}
f \lesssim g 
\end{equ}
if there exists a constant $ C> 0 $ such that $ f(x) \leqslant C g(x) $ for all
$ x \in \mX $ (similarly $ f \gtrsim g $ or $ f \simeq g$, the latter if both inequalities
hold). In order to lighten the notation and reduce the number of letters used
to denote constants, we will allow the exact value of generic
constants $
C( \vt) $ depending on some parameter $ \vt$ from a parameter set $ \Theta $,
to change from line to line.

\section{Main results}

Throughout this work the following assumptions are in force.
\begin{assumption}\label{assu:setting}
   We fix a (small enough) constant $ \kappa > 0 $.
   \begin{enumerate}
      \item \textbf{(Noise)} Let \((\Omega, \mF, \PP)\) be a probability space
supporting a space-time white noise \(\xi \colon \Omega \to (\mS^{\prime}(\RR \times \TT))^{2}\),
namely a random variable such that the \(\xi_{i}(\varphi)\) are jointly centered Gaussian for
\(i=1,2\) and \(\varphi \in \mS(\RR \times \TT^{2})\), with covariance
\[ \EE \big[ \xi_{i}(\varphi) \xi_{j}(\varphi^{\prime})  \big] = 1_{\{i=j\}}
\int_{\RR \times \TT^{2}} \varphi(t, x) \varphi^{\prime}(t, x) \ud t \ud x.\]
      
      \item \textbf{(Perturbation)} One has $ \zeta \in
\mC_{\mathrm{parab}}^{-2+3\kappa} (\RR \times \TT^2; \RR^2)$.

      \item \textbf{(Initial condition)} One has $ u_{0} \in 
         \mC^{-1 + \kappa} \cup L^{2} $ such that $ \div (u_{0}) = 0 $ and $ \Pi_{\times} u_{0} = 0 $.
   \end{enumerate}
\end{assumption}

Here the space $\mC^{-2+3\kappa}_{\mathrm{parab}}(\RR \times \TT^2; \RR^2)$ denotes the parabolically scaled 
Hölder--Besov space of space-time distributions as in \cite[Definition
3.7]{Hairer2014} with $\mf{s} = (2,1)$. This space satisfies that if $\zeta \in
\mC^{-2+3\kappa}_{\mathrm{parab}}$, then also $(t,x) \mapsto \zeta (t,x)
1_{[0,\infty)} (t) \in \mC^{-2+3\kappa}_{\mathrm{parab}}$ and the convolution
with the heat kernel $\int_0^t P_{t-s} \Pi_{\times} \zeta \ud s \in C([0,
\infty); \mC^{2 \kappa})$. We loose a $\kappa$ 
in spatial regularity (which we think of as small) in order to obtain
continuity in time.
The factor $ 2 $ in front of $ \kappa $ is simply for later convenience.
Here and in the rest of the work $\mC^\alpha$, for
$\alpha \in \RR$,
refers to the spatial Hölder--Besov space $\mC^\alpha(\TT^2;\RR^2)$ as defined
in Appendix~\ref{sec:function-spaces}.

\begin{remark}\label{rem:perturb}
   Assumption~\ref{assu:setting} allows for perturbations $ \zeta
   $ that do not lie in the Cameron--Martin space $ L^{2}_{\mathrm{loc}}([0, \infty) \times
   \TT^{2}) $ of the noise $ \xi $: in particular for such $ \zeta $, the law
   of the solution is not absolutely continuous to the solution to the 2D SNS
   equations with space-time white noise, for which global existence for
   non-anticipative initial conditions is already understood. Our assumption allows, for instance, $ \zeta $ to be a
   realisation of a noise that is white in time but smoother than $
   \xi $ in space. In addition $ \zeta $ could depend on the realisation $
   \omega \in \Omega $ of the noise $ \xi $, since our argument is completely
   pathwise.
\end{remark}

\begin{remark}\label{rem:ic}
   For $ u_{0} \in \mC^{-1 + \kappa}$ we will prove the
   existence of global mild solutions. For $ u_{0} \in L^{2} $ we will
   introduce a suitable notion of weak solution and prove global
   well-posedness for such solutions. 
\end{remark}
To simplify the study of {\eqref{eqn-stochastic-ns-original}} it is convenient
to project onto the space of divergence-free functions, thus removing the
pressure from the equation. For $k =
(k_1, k_2) \in \ZZ^2$ write $ k^{\perp} = (k_2, - k_1)$
and define the Leray projection in terms of Fourier coefficients by:
\[ \mathbf{P} \varphi (x) = \sum_{k \in \ZZ^2_{*}} e^{2 \pi \iota k \cdot
   x} \left( \hat{\varphi} (k) \cdot \frac{k^{\perp}}{| k^{\perp} |} \right)
   \frac{k^{\perp}}{| k^{\perp} |} \;, \qquad \forall \varphi \in
\mS^{\prime}(\RR \times \TT^{2}; \RR^{2}) \;, \]
where $ v \cdot w$ denotes the scalar product in $\RR^{2} $.
Applying $\mathbf{P}$ to {\eqref{eqn-stochastic-ns-original}},
we obtain
\begin{equ}[eqn-stochastic-ns-projected]
\partial_t u + \mathbf{P} (u \cdot \nabla u)  =  \Delta u +
\mathbf{P} \Pi_{\times }  (\zeta + \xi) \qquad u
(0, \cdot) = u_0 (\cdot)\;,
\end{equ}
since by assumption $\div (u_0) = 0$.
Due to the irregularity of the noise, the solution $u$ to~\eqref{eqn-stochastic-ns-projected} does not lie in $L^2$, so the
non-linearity $ \mathbf{P} (u \cdot \nabla u) = \mathbf{P} \div (u^{\otimes 2})
$ is a priori ill-defined. The key insight of
\cite{DaPrato-Debusshe-2DNS-2002} (following earlier works such as
\cite{Bourgain1996}) was to consider the solution to
{\eqref{eqn-stochastic-ns-projected}} as a perturbation of the solution $X$ to
the linear equation:
\begin{equation}\label{e:X}
   \partial_t X = \Delta X + \mathbf{P} \Pi_{\times}\xi\;, \qquad X_{0} = 0 \;.
\end{equation}
Note that contrary to a setting common in the SPDE literature we do not
choose $ X_{0} $ so that the process $ t \mapsto X_{t}  $ is stationary. 
Instead, our choice of zero initial condition will be convenient later on to deal with initial data in $
L^{2}$. 
Gaussian computations guarantee that \eqref{e:X} admits a unique solution $ X \in C ([0, \infty) ;
\mC^{- \kappa}) $ for any $ \kappa > 0 $, implying that $ X $ is barely not a
function (these calculations are by now
classical, but see also Lemma~\ref{lem:stochastic-bds} for
similar bounds).
Setting $u = X + v$, $v$ should at least formally
solve
\begin{equ}[eqn:daPrato-Debussche]
    \partial_t v  = \Delta v + \mathbf{P} \div ((v + X)^{\otimes 2}) +
    \mathbf{P} \Pi_{\times} \zeta \;,  \quad v (0, \cdot) = u_0 (\cdot)\;.
\end{equ}
Indeed the term $\div (X^{\otimes 2}) $ is defined in $C( [0, \infty) ;
\mC^{- 1-\kappa}) $ for any $ \kappa > 0 $ as a field in the second Wiener
chaos (despite the
product being a-priori ill-posed, since $ X $ has negative regularity), and parabolic
regularity estimates guarantee that one can find, at least for smooth initial
conditions, a solution $ v $ to
\eqref{eqn:daPrato-Debussche} satisfying $ v \in C ( (0, \infty); \mC^{2\kappa} )
$, for $ \kappa > 0 $ small, as is captured by the following result.

\begin{theorem}[Da Prato, Debussche
{\cite{DaPrato-Debusshe-2DNS-2002}}]\label{thm:dpd}
There exists a null set \( \mN \subseteq \Omega\) such
that for any $ \omega \not\in \mN$ and $\kappa > 0 $ the following holds. For any
$ u_{0} \in \mC^{-1 + \kappa} $
there exists a
\(T^{\mathrm{fin}}(\omega,
u_{0} ) \in (0, \infty]\) and a unique \emph{maximal} mild solution \(v(\omega)\) to
\eqref{eqn:daPrato-Debussche} on $ [0, T^{\mathrm{fin}} (\omega, u_{0})) $, with $
v (\omega, 0, \cdot) = u_{0}(\cdot)$.
\end{theorem}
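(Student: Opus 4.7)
The plan is to run a pathwise Banach fixed-point argument on the mild formulation
\begin{equ}
v(t) = P_{t} u_{0} + \int_{0}^{t} P_{t-s} \mathbf{P} \div\bigl((v+X)^{\otimes 2}\bigr)(s)\, \ud s + \int_{0}^{t} P_{t-s} \mathbf{P} \Pi_{\times} \zeta(s)\, \ud s\;,
\end{equ}
following Da Prato and Debussche. First I would separate the randomness from the rest of the argument: Gaussian hypercontractivity together with Kolmogorov's continuity criterion (of the flavour of Lemma~\ref{lem:stochastic-bds}) yields a null set $\mN$ outside of which $X \in C([0,\infty); \mC^{-\kappa})$ and the renormalised Wick square $X^{\otimes 2} \in C([0,\infty); \mC^{-2\kappa})$ exist as second-chaos objects for every $\kappa > 0$. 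Everything that follows is then purely deterministic.

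Next, I would expand $(v+X)^{\otimes 2} = v^{\otimes 2} + v \otimes X + X \otimes v + X^{\otimes 2}$. The Wick term is given by the probabilistic input and $v^{\otimes 2}$ causes no trouble provided $v$ has a bit of positive spatial regularity. The mixed term $v \otimes X$ is the interesting one: by Bony's paraproduct decomposition it is continuous as a map $\mC^{\beta} \times \mC^{-\kappa} \to \mC^{-\kappa}$ as soon as $\beta > \kappa$. Accordingly, I would work in the time-weighted space
\begin{equ}
\mX_{T} = \Bigl\{ v \in C((0,T]; \mC^{2\kappa}) : \| v \|_{\mX_{T}} := \sup_{t \in (0,T]} t^{\alpha}\| v(t) \|_{\mC^{2\kappa}} < \infty \Bigr\}\;,
\end{equ}
with $\alpha \in (0,1)$ chosen so that the heat flow $t \mapsto P_{t} u_{0}$ of a datum $u_{0} \in \mC^{-1+\kappa}$ belongs to $\mX_{T}$, while simultaneously the Schauder bound for $P_{t-s}\mathbf{P}\div$ (which costs one derivative but gains a factor $(t-s)^{-\tfrac{1}{2}-}$ that is still integrable in $s$) leaves a positive power $T^{\theta}$ in front of each nonlinear contribution. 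A routine computation then produces estimates of the form
\begin{equ}
\| \Phi(v) \|_{\mX_{T}} \leqslant C_{0}(u_{0}, \xi, \zeta) + T^{\theta} Q(\| v \|_{\mX_{T}})\;, \quad \| \Phi(v) - \Phi(\tilde v) \|_{\mX_{T}} \leqslant T^{\theta} L\| v - \tilde v \|_{\mX_{T}}\;,
\end{equ}
where $\Phi$ denotes the right-hand side of the mild equation, $Q$ is a quadratic polynomial and $L$ depends linearly on $\|v\|_{\mX_{T}}, \|\tilde v\|_{\mX_{T}}$. For $T$ small enough depending on $\|u_{0}\|_{\mC^{-1+\kappa}}$ and on the stochastic data and $\zeta$ on $[0,T]$ this is a contraction on a closed ball, yielding a unique local mild solution.

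To produce the maximal solution, I would define $T^{\fin}(\omega, u_{0})$ as the supremum of those $T > 0$ for which a solution in $\mX_{T}$ exists. A standard blow-up alternative shows that if $T^{\fin} < \infty$ then $\| v(t) \|_{\mC^{2\kappa}}$ must diverge as $t \uparrow T^{\fin}$, for otherwise one could restart the contraction at some $t_{0}$ close to $T^{\fin}$ from the datum $v(t_{0}) \in \mC^{2\kappa}$ and extend the solution strictly past $T^{\fin}$, contradicting maximality. Uniqueness on $[0, T^{\fin})$ follows from local uniqueness by gluing. The main technical obstacle is calibrating the blow-up exponent $\alpha$ of the weighted norm so that both $P_{t} u_{0}$ lies in $\mX_{T}$ for arbitrary $u_{0} \in \mC^{-1+\kappa}$ and the divergence-form nonlinearity picks up a positive power of $T$ after the Schauder gain; once this balance is struck the remainder is bookkeeping with paraproduct estimates.
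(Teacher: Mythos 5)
Your route is the same one the paper has in mind: the theorem is quoted from Da Prato--Debussche, and the paper's own record of the argument (Proposition~\ref{prop:wp-phi}) is exactly a pathwise contraction in a time-weighted H\"older--Besov space $\mM^{\gamma}_{T}\mC^{\beta}$, with the stochastic objects $X$ and the Wick square constructed once and for all on a full-measure set, followed by the standard maximal-time/blow-up alternative. So conceptually there is nothing different here.

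There is, however, one concrete calibration that fails as you have set it up, and it is not mere bookkeeping. You measure $X$ in $\mC^{-\kappa}$, take $u_{0}\in\mC^{-1+\kappa}$ with the \emph{same} $\kappa$, and ask for solutions in $\mX_{T}$ with target regularity $2\kappa$. The linear part forces $\alpha\geqslant\tfrac12(2\kappa+1-\kappa)=\tfrac{1+\kappa}{2}>\tfrac12$, while the Duhamel term for $v^{\otimes 2}$ produces $\int_{0}^{t}(t-s)^{-\sigma}s^{-2\alpha}\ud s$, which diverges at $s=0$ whenever $2\alpha\geqslant 1$. So no admissible $\alpha$ exists for your chosen triple of exponents: you need the target regularity $\beta$ to satisfy $\beta<\kappa$ (so that $\alpha<\tfrac12$) and simultaneously $\beta>$ (regularity deficit of $X$) for the resonant part of $v\otimes X$. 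The fix is to decouple the parameters, using that $X\in\mC^{-\kappa'}$ for \emph{every} $\kappa'>0$: pick $\kappa'<\beta<\kappa$ (e.g.\ $\kappa'=\kappa/4$, $\beta=\kappa/2$), and intersect the resulting null sets over a countable family of parameters. This is precisely what the paper's Proposition~\ref{prop:wp-phi} encodes with its $\kappa$ versus $2\kappa$ bookkeeping (datum in $\mC^{-1+2\kappa}$, noise in $\mC^{-\kappa}$, solutions in $\mM^{\gamma/2}_{T}\mC^{3\kappa/2}$ with $\gamma=1-\kappa/2$, so the weight exponent stays below $\tfrac12$). With that adjustment your contraction, maximality and gluing arguments go through as stated.
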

The meaning of mild solutions is kept vague: we refer the reader to
Proposition~\ref{prop:wp-phi} and its (sketch of) proof. With the solution
being maximal we understand that if there
exists another \( \overline{v}(\omega)\) on an interval \([0, \overline{T}(\omega))\) that
solves \eqref{eqn:daPrato-Debussche} with the same initial condition $
u_{0}$, then 
\[ \overline{T}(\omega) \leqslant T^{\mathrm{fin}}(\omega, u_{0}) \;, \quad \overline{v} (\omega, t) =
v(\omega, t) \;, \quad \forall 0 \leqslant t < \overline{T}(\omega) \;.
\] 
We say that the maximal solution is \textit{global} for given \( \omega \) and
$ u_{0} $, if $ T^{\mathrm{fin}} (\omega, u_{0}) = \infty $.

\begin{remark}\label{rem:ic-space}
   By Sobolev embedding in dimension $ d =2 $, $ H^{\kappa} \subseteq \mC^{-1 + \kappa}
   $, so that mild solutions can deal with any initial condition with slightly better
   regularity than the critical space $ L^{2} $.
\end{remark}
Our main result concerns the existence of global solutions
for an arbitrary initial condition, almost surely.

\begin{theorem}[Global solutions]\label{thm:existence-uniqueness-global-solutions}
   There exists a null set $ \mN^{\prime} \subseteq \Omega $ such that
   $$T^{\mathrm{fin}}(u_{0}, \omega) = \infty$$ for all $\omega
      \not\in \mN^{\prime}, \kappa >0$  and  $u_{0} \in
      \mC^{- 1 + \kappa}, \zeta$ satisfying Assumption~\ref{assu:setting}.
\end{theorem}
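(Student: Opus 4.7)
The plan is to reduce Theorem~\ref{thm:existence-uniqueness-global-solutions} to a pathwise a priori bound on a suitable norm of the maximal mild solution provided by Theorem~\ref{thm:dpd}, and then to invoke a standard blow-up criterion to conclude that $T^{\mathrm{fin}}(\omega, u_{0}) = \infty$. Since the forthcoming estimate will depend on $(u_{0}, \zeta)$ only through their norms in Assumption~\ref{assu:setting}, the null set $\mN^{\prime}$ can be taken to depend only on the a.s.\ finite size of a finite family of Wick-ordered stochastic objects built from $X$, which gives the stated uniformity.

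Starting from the Da Prato--Debussche splitting $u = X + v$ of equation~\eqref{eqn:daPrato-Debussche}, I would perform a further paracontrolled ansatz $v = Y + w$, where $Y$ collects the components of $v$ that inherit the roughness of $X$ (morally, paraproducts of the form $v \para X$ together with the stochastic convolution against $\mathbf{P}\div(X \sotimes X)$), so that the remainder $w$ is regular enough to live in an $L^{2}$-type space. The crucial new ingredient, as announced in the introduction, is to superimpose on this paracontrolled structure a \emph{dynamic} high-low frequency cutoff at a dyadic scale $N = N(t)$ chosen as an increasing function of $\|w(t)\|$: the low-frequency component $w_{\leqslant N}$ is then analysed via a classical energy identity, while the high-frequency tail $w_{> N}$ is absorbed perturbatively using the smallness of $X_{> N}$ and of the higher-order stochastic objects in negative regularity norms.

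Testing the evolution of $w$ against $w_{\leqslant N}$, the nonlinear contribution from $\mathbf{P}\div(u^{\otimes 2})$ splits into (i) genuinely cubic terms in $w$, vanishing by incompressibility or absorbed by the dissipation $-\|\nabla w\|^{2}$; (ii) cross-terms linear in the noise, handled by paracontrolled calculus; (iii) the truly dangerous resonant interaction morally of the form $\scal{w, w \reso X}$. The $\log$-correlated nature of $X$ enters crucially here: using $\|X_{\leqslant N}(t)\|_{L^{\infty}} \lesssim \sqrt{\log N}$ together with analogous bounds for the Wick-ordered powers, and calibrating $N \sim \|w(t)\|^{\theta}$ for a small $\theta > 0$, turns powers of $N$ into $\log$-losses. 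After the finite, solution-dependent logarithmic renormalisation of the quadratic form associated with $w \reso X$, I expect a differential inequality of the form
\begin{equ}
\partial_{t} \|w(t)\|^{2} + \|\nabla w(t)\|^{2} \lesssim c_{t} \bigl( 1 + \log(1 + \|w(t)\|) \bigr) \|w(t)\|^{2}\;,
\end{equ}
where $c_{t}$ depends continuously on the noise data up to time $t$. Gronwall then yields the double-exponential bound $\|w(t)\| \leqslant \exp(\exp(c_{t}\, t))$ announced in the introduction, excluding finite-time blow-up.

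The hard part will be step (iii): proving that the singular quadratic form associated with $w \reso X$, once the correct scalar (solution-dependent) renormalisation has been subtracted, is bounded by a multiple of $(\log N)\|w\|^{2}$ uniformly in $N$. Balancing this logarithmic loss against the bound for $w_{> N}$, which deteriorates polynomially as $N$ grows, is precisely why a \emph{dynamic} choice of $N(t)$ is needed, and it is at this junction that the high-low decomposition and the paracontrolled ansatz must be dovetailed with care.
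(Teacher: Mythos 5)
Your outline follows the same route as the paper: Da Prato--Debussche plus a second iteration $v=Y+w$, a high-low decomposition whose cutoff grows with $\|w\|$, an $L^2$ energy estimate on the low-frequency part with a logarithmic (solution-dependent) renormalisation of the singular quadratic form, and a $\log$-Gronwall argument giving double-exponential growth. At that level of description the strategy is the right one. But as it stands there are two concrete gaps.

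First, the final step ``invoke a standard blow-up criterion'' is not available here: the a priori bound you produce controls $\|w_t\|_{L^2}$ (plus a dissipation integral on the low-frequency part), while the maximal mild solution of Theorem~\ref{thm:dpd}/Proposition~\ref{prop:wp-phi} requires data in $\mC^{-1+2\kappa}$, and $L^2$ is exactly critical, so boundedness of $\|w\|$ up to $T^{\mathrm{fin}}$ does not by itself let you restart the local theory. The paper has to close this loop by propagating a small amount of extra regularity: Lemma~\ref{lem:higher-reg} shows that the energy bound implies a uniform $H^{\ve}$ bound, and only then does Corollary~\ref{cor:explosion-condition} give the $L^2$ blow-up criterion that makes the energy estimate conclusive. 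Some such argument (or an equivalent persistence-of-regularity statement) is indispensable and is missing from your plan. Second, your cutoff $N(t)\sim\|w(t)\|^{\theta}$ is taken to vary continuously with the solution; differentiating $t\mapsto\|w^{\mL}_t\|^2$ then produces terms coming from the time derivative of the decomposition itself, which reintroduce precisely the ill-defined pairing you are trying to avoid. The paper circumvents this by discretising: $\lambda_t$ is piecewise constant, jumping only at stopping times $T_i$ where $\|w\|$ crosses integer levels (Definition~\ref{def:stop-time-and-lambda}), and the global statement is then obtained from the non-summable lower bound on $T_{i+1}-T_i$ of Lemma~\ref{lem:interval}. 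Relatedly, the ``high-frequency tail'' cannot be the plain Littlewood--Paley truncation $w_{>N}$, which is not small in any norm you control; the smallness comes from the paracontrolled structure, i.e.\ from defining $w^{\mH}=\mathbf{P}\div(w\para\mH_{\lambda_t}Q)$ (truncating the controller $Q$, not $w$) so that Lemma~\ref{lem:high-feq-ref} makes $w^{\mH}$ order one. Finally, note that the paper's logarithm does not come from a pathwise bound $\|X_{\leqslant N}\|_{L^\infty}\lesssim\sqrt{\log N}$ but from the renormalisation constant $\mf{r}_{\lambda}(t)\leqslant c\log\lambda$ of the Anderson-type operator $\mA^{\lambda}_t$ together with the resolvent bound of Proposition~\ref{prop:def-A}; if you want to run your $L^\infty$ version you would still need the second-chaos construction of the resonant products (Lemma~\ref{lem:stochastic-bds}), done along the fixed countable family $\{\lambda^i\}$ so that the null set does not depend on $u_0$.
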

The null set $ \mN^{\prime} $ is the one appearing in
Lemma~\ref{lem:prob-space}.
Theorem~\ref{thm:existence-uniqueness-global-solutions}
is proven at the very end of Section~\ref{sec:global-solutions}. Next we
consider initial
condition $ u_{0} \in L^{2} $. Note that in this case Theorem~\ref{thm:dpd}
does not guarantee even local in time well-posedness.

\begin{theorem}[Global high-low weak solutions]\label{thm:weak}
   For the same null set $ \mN^{\prime} \subseteq \Omega $ as in
   Theorem~\ref{thm:existence-uniqueness-global-solutions} the
   following holds. 
   For every $ \omega \not\in \mN^{\prime}, \kappa >0 $ and
   $ u_{0} \in L^{2}, \zeta $ satisfying Assumption~\ref{assu:setting} there
   exists a unique, global high-low weak
   solution to \eqref{eqn:daPrato-Debussche}, with initial condition $
   u_{0}$, in the sense of Definition~\ref{def:weak}.
\end{theorem}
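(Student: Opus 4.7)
The plan is to construct global high-low weak solutions by a vanishing regularisation of the initial data and to establish uniqueness via an energy argument tailored to the weak formulation. The overall structure should parallel the proof of Theorem~\ref{thm:existence-uniqueness-global-solutions}, reusing the a priori estimates but now applied to a low-frequency component that starts in $L^{2}$ rather than $\mC^{-1+\kappa}$.

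First I would approximate $u_{0} \in L^{2}$ by a sequence $u_{0}^{(n)} = \rho_{1/n} \ast u_{0}$ of smooth divergence-free, mean-zero fields with $u_{0}^{(n)} \to u_{0}$ in $L^{2}$. For each $n$ and each $\omega \not\in \mN^{\prime}$, Theorem~\ref{thm:existence-uniqueness-global-solutions} produces a global mild solution $v^{(n)}$. Using the high-low decomposition underlying that theorem, I write $v^{(n)} = Y^{(n)} + w^{(n)}$ with $Y^{(n)}$ the (small) paracontrolled high-frequency remainder and $w^{(n)}$ the more regular low-frequency component, initialising the decomposition so that $Y^{(n)}(0) = 0$ and $w^{(n)}(0) = u_{0}^{(n)}$; crucially this splitting only requires $L^{2}$ regularity of the datum, which is why $L^{2}$ is the natural space for the low component.

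The energy estimate that drives Theorem~\ref{thm:existence-uniqueness-global-solutions}—quadratic form of a singular operator plus a solution-dependent logarithmic renormalisation—then yields uniform-in-$n$ bounds for $w^{(n)}$ in $L^{\infty}_{t} L^{2} \cap L^{2}_{t} H^{1}$ (depending only on $\| u_{0} \|_{L^{2}}$ and on norms of the stochastic data off $\mN^{\prime}$) together with the quantitative smallness of $Y^{(n)}$. Standard Aubin–Lions compactness on $w^{(n)}$, combined with stability of paraproducts and of the renormalised products built from $\xi$, lets me extract a subsequential limit $(w, Y)$. Passing to the limit term by term against smooth divergence-free test functions in the weak formulation, including the singular paracontrolled and resonant contributions, shows that $v = Y + w$ satisfies Definition~\ref{def:weak}; weak continuity in time of $w$ ensures that the initial datum $u_{0}$ is attained in $L^{2}$.

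The main obstacle is uniqueness, which cannot be expected in the class of general weak solutions (this is already open deterministically for $L^{2}$ data) and is exactly what motivates the specific notion of high-low weak solution. Given two such solutions with the same initial condition and common stochastic input, I would form the difference of low components $\delta w = w^{(1)} - w^{(2)}$, which satisfies a linear equation whose coefficients involve the shared high-part $Y$ and both $w^{(i)}$. The regularity built into Definition~\ref{def:weak} should be exactly enough to test this equation against $\delta w$ itself; the dangerous quadratic form in $\delta w$ arising from the interaction with $Y$ is then controlled by the same log-renormalised coercivity bound used in the proof of Theorem~\ref{thm:existence-uniqueness-global-solutions}, and a Gronwall argument (tolerating the double-exponential growth of the background) yields $\delta w \equiv 0$ and hence uniqueness.
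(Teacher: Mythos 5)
Your existence step is broadly in the right spirit, but note two deviations from the paper: the paper regularises the \emph{noise} as well ($X^{n}=\mL_{n}X$, with its own renormalisation constants $\mf{r}^{n}_{\lambda}$), so that the approximating solutions are genuinely smooth and the energy identities need no further justification, and it must in addition extract a \emph{uniform} bound $\lambda^{n}_{t}\leqslant\overline{\lambda}_{T}$ on the dynamic frequency (via the stopping-time/$\sum 1/(i\log i)$ argument) in order to produce the fixed-$\lambda$ decomposition $w=w^{\mL,\lambda}+w^{\mH,\lambda}$, $w^{\mH,\lambda}=\div(w\para\mH_{\lambda}Q)$, valid for all $\lambda\geqslant\lambda_{T}$, which is what Definition~\ref{def:weak} actually demands. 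You also conflate $Y$ (the Da Prato--Debussche correction solving \eqref{eqn:Y}, common to all solutions) with the high-frequency paracontrolled component $w^{\mH,\lambda}$, and you speak of passing to the limit in ``resonant contributions'' in the weak formulation, whereas the weak formulation in Definition~\ref{def:weak} contains no renormalised or resonant products at all; these only enter the a priori estimates. These points are repairable, but as written the existence argument does not deliver the structure required by the definition.

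The genuine gap is in uniqueness. Your plan is to test the equation for $z=w-\overline{w}$ against itself and to control the singular quadratic form by ``the same log-renormalised coercivity bound'' from the proof of Theorem~\ref{thm:existence-uniqueness-global-solutions}. That tool does not apply here: Proposition~\ref{prop:def-A} controls $\mA^{\lambda_t}_t$ on paracontrolled functions whose decomposition uses the solution-adapted \emph{dynamic} frequency $\lambda_{t}\simeq(1+\|w_{T_i}\|)^{\mf a}$, while the difference of two HL solutions only comes with decompositions at a \emph{fixed} $\lambda$, and there is no renormalised structure attached to $z$. The paper's uniqueness proof (Lemma~\ref{lem:uniq-weak}) uses no renormalisation at all: it exploits the requirement in Definition~\ref{def:weak} that the decomposition exist for \emph{every} $\lambda\geqslant\lambda_{T}$, chooses $\lambda$ so large (relative to $\mathbf{N}^{\kappa}_{T}$) that $\|z\|_{H^{\alpha}}\leqslant 2\|z^{\mL,\lambda}\|_{H^{\alpha}}$ and $\|z^{\mH,\lambda}\|_{\mC^{1-3\kappa}_{4}}\lesssim\|z^{\mL,\lambda}\|_{H^{1-2\kappa}}$, and then estimates $\langle z^{\mL},\Delta z^{\mL}+\div(2(\mL_{\lambda}X)\sotimes z^{\mL})\rangle$ brutally via $\|\mL_{\lambda}X\|_{\infty}\lesssim\lambda^{2\kappa}\mathbf{N}^{\kappa}_{T}$, paying a fixed $\lambda$-dependent constant. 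Moreover, the real work is not the ``linear'' singular term but the quadratic terms $\div(z\sotimes(w+\overline{w}))$ and the commutator $C^{\para}(z,\mH_{\lambda}Q)$: closing the Gronwall argument requires exactly the integrability built into Definition~\ref{def:weak} ($w^{\mL,\lambda}\in L^{2}_{t}H^{1}$, $w^{\mH,\lambda}\in L^{2}_{t}\mC^{1-2\kappa}_{4}$), combined with Gagliardo--Nirenberg/interpolation to get a factor $\int_{0}^{t}(1+\opnorm{w_{s}}^{2}_{\lambda}+\opnorm{\overline{w}_{s}}^{2}_{\lambda})\ud s$ that is finite and small for small $t$. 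Your sketch does not address either of these points, so the uniqueness part as proposed would not go through. (As an aside, your remark that uniqueness of weak solutions for $L^{2}$ data is open even deterministically is inaccurate in 2D, where Leray--Hopf solutions are classically unique; the difficulty here comes from the roughness of the forcing, not from the dimension-generic non-uniqueness problem.)
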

This result follows from Lemma~\ref{lem:existence-weak} (existence) and
Lemma~\ref{lem:uniq-weak} (uniqueness).
The crux of the argument for both results lies in an energy estimate, based on a
dynamic high-low frequency decomposition: we will use classical energy
estimates for low frequencies and tools from singular SPDEs for high
frequencies.

\section{First steps}

To derive our energy estimate we start by iterating the Da Prato--Debussche
trick to improve as much as possible the regularity of the right-hand side.
The issue with using \eqref{eqn:daPrato-Debussche} to obtain an energy estimate
is that formally such an estimate would require us to make sense of the
pairing, which appears when differentiating in time the $ L^{2} $ norm $ t
\mapsto \| v \|_{L^{2}}^{2}$:
\begin{equ}
   \langle v,  \Delta v + \div ((v + X)^{\otimes 2}) +
   \Pi_{\times} \zeta\rangle = \langle  v,  \Delta v + \div (v^{\otimes 2} +
   X^{\otimes 2} + 2 v \sotimes X) + \mathbf{P} \Pi_{\times} \zeta \rangle \;.
\end{equ}
Since $ \zeta \in \mC^{-2 + 2 \kappa}, X \in \mC^{- \kappa} $ and, at least
locally, $ v \in \mC^{2 \kappa}$, none of the pairings 
\begin{equ}
   \langle v, \Delta v \rangle\;, \quad \langle v, \div (2 v \sotimes X)
   \rangle\;, \quad \langle v, \div (X^{\otimes 2}) \rangle \;, \quad \langle
   v, \mathbf{P} \Pi_{\times} \zeta \rangle 
\end{equ}
are well defined for generic elements of these spaces. We can 
improve the situation by introducing the solution $ Y $ to the linear equation
\begin{equ}[eqn:Y]
   \partial_{t} Y = \Delta Y + \mathbf{P} \div ( 2 X \sotimes Y + X^{\otimes 2})
   + \mathbf{P} \Pi_{\times} \zeta \;, \qquad Y(0, \cdot) = 0 \;,
\end{equ}
and then setting $ w = v - Y $ so that, setting $ D = 2 (X + Y) $, 
\begin{equ}[eqn:w]
   \partial_{t} w  = \Delta w + \mathbf{P} \div ( w^{\otimes 2} + D \sotimes w +
   Y^{\otimes 2})\;, \quad
   w(0, \cdot)  = u_{0} (\cdot) \;.
\end{equ}
Since the
worst term in \eqref{eqn:Y} is $ \mathbf{P} \Pi_{\times} \zeta $,
we have $ Y \in \mC^{2 \kappa} $, so that we expect $ w \in
\mC^{1 - \kappa} $, the worst term in \eqref{eqn:w} being given by $
\mathbf{P} \div( 2X \sotimes w) \in \mC^{-1- \kappa}$. If we now consider the
pairing
\begin{equ}[eqn:w-pairing]
   \langle w,\Delta w + \mathbf{P} \div ( w^{\otimes 2} + 2 (X + Y)\sotimes w +
   Y^{\otimes 2})\rangle \;,
\end{equ}
appearing in the time derivative of $\|w\|^2$, the only ill-defined term is 
\begin{equ}[eqn:qf]
   \langle w,\Delta w + \mathbf{P} \div ( 2 X \sotimes w ) \rangle\;.
\end{equ}
In fact, the main issue in deriving a-priori estimates on the solution is to give
meaning to this pairing. At a very heuristic (and ultimately wrong) level, we
would like to treat \eqref{eqn:qf}
as a random Dirichlet form. The problem with this approach is that the quadratic
form is not semi-bounded from below, which reflects the necessity of
renormalisation for the symmetrised version of the operator $ \varphi \mapsto
\Delta \varphi + 2 \div (X \sotimes \varphi) $. This problem will
be addressed by the already mentioned division of scales, so that at a fixed time we
will require only a finite (but solution-dependent!) logarithmic
renormalisation constant.

The second issue to address is how much regularity
can be found in this quadratic form: by comparison, the quadratic form
associated to the Laplacian guarantees one degree of regularity since $ \langle
f, \Delta f \rangle = - \| \nabla f \|^{2} $.
In our case, for $ \mu  \gg 1 $, the
following resolvent (here $ \mathrm{sym} $ stands for the symmetric part of the
operator) is expected to be a bounded operator
\begin{equ}
   ( \Delta  + \mathbf{P} \div (2 X \sotimes \cdot) - \mu
   )^{-1}_{\mathrm{sym}} \colon L^{2} \to H^{1 - \kappa}\;.
\end{equ}
In particular, because of the stochastic terms, it is expected to be less regularising than the
Laplacian alone (which is bounded into $ H^{2} $). Therefore, we could
expect the quadratic form above to be bounded from above as follows, for some
(random) $ c > 0 $
\begin{equ}
   \langle w,  \Delta w + \mathbf{P} \div (2 X  \sotimes w) \rangle
   \lesssim - \| w \|_{H^{1/2 - \kappa/2}}^{2} + c \mu \| w \|^{2} \;.
\end{equ}
This is a significant loss of regularity compared to the
Laplacian and such a bound would not be sufficient to deduce our result. To
solve this issue we observe that our argument only requires a fraction of
regularity to treat the singular term $ \div (X \sotimes w) $, so we split
the quadratic form into
\begin{equ}
   \langle w , \Delta w + \mathbf{P} \div (2 w \sotimes X) \rangle =
   \ft12 \langle w, \Delta w  \rangle + \langle w, \ft12 \Delta
   w + \mathbf{P} \div( 2 w \sotimes X) \rangle \;.
\end{equ}
The first term yields an $ H^{1} $ bound and the
second term is controlled by the division of scales. Note that this division is
extremely artificial and highlights that our argument is somewhat rough and
does not optimally capture the actual small scale structure of the solution.

\subsection{Intermezzo: collecting the stochastic terms}

In order to reduce the number of norms that we will later use in our bounds, it
will be convenient to collect all stochastic quantities as elements of a large
Banach space and use only one norm on that space. So far we have considered the
following time-dependent processes, with associated ``magnitude'' $
\mathbf{L}_{t} $:
\begin{equ}
   t \mapsto (X_{t} , Y_{t}) \in \mC^{- \kappa} \times
   \mC^{2 \kappa} \;, \qquad \mathbf{L}_{t}^{\kappa} = 1+ \sup_{0 \leqslant s \leqslant t}
   \left\{ \| X_{s} \|_{\mC^{- \kappa}}+ \| Y_{s} \|_{\mC^{2 \kappa}} \right\} \;.
\end{equ}
for $ t \in [0, \infty), \kappa > 0 $.
In addition, in Section~\ref{sec:sym-op} we will consider the time-dependent Anderson-type
operator $ \frac{1}{2} \Delta + 2 \nabla_{\mathrm{sym}}X_{t} $. We therefore additionally
consider the following ``enhanced noise'' process, for a given parameter $ \lambda \geqslant 1 $
and $ t \in [0, \infty) $:
\begin{equ}[e:defProc]
    t \mapsto
   (2 \nabla_{\mathrm{sym}} \mL_{\lambda} X_{t}, (2 \nabla_{\mathrm{sym}}
   \mL_{\lambda} X_{t}) \reso P^{\lambda}_{t} - \mf{r}_{\lambda} \mathrm{Id} )
   \;,
\end{equ}
taking values in $ \mC^{-1 - \kappa} \times \mC^{- \kappa} $ (see Definition~\ref{def:cut-off}
for the definition of the projection $\mL_\lambda$, \eqref{eqn:ren-const} for the definition of
$\mf{r}_{\lambda}$, Lemma~\ref{lem:stochastic-bds} for the definition of $P^{\lambda}_{t}$,
and 
   Section~\ref{sec:para} for the definition of the ``resonant product''
$\reso$). Then
we measure the magnitude of the enhanced noise together with the processes $
X $ and $ Y $ via
\begin{equ}[e:def-N]
   \mathbf{N}_{t}^{\kappa} = \mathbf{L}_{t}^{\kappa} + \sup_{0 \leqslant s
   \leqslant t}   \sup_{i \in \NN} \Big\{\| (2 \nabla_{\mathrm{sym}
      } \mL_{\lambda^{i}}X_{s} ) \reso P^{\lambda^{i}}_{s} -
   \mf{r}_{\lambda^{i}} \mathrm{Id} \|_{\mC^{- \kappa}} \Big\} \;,
\end{equ}
where $ \{ \lambda^{i} \}_{i \in \NN} $ is defined in \eqref{e:def-lambda}. 

\begin{lemma}\label{lem:prob-space}
   Let $ (\Omega, \mF, \PP) $ be the probability space as in
   Assumption~\ref{assu:setting}. There exist a null set $ \mN^{\prime} \subseteq
   \Omega $ such that 
   \begin{equ}
      \mathbf{N}_{t}^{\kappa}  (\omega) < \infty\;, \qquad \forall \omega \not\in
      \mN^{\prime}  \;, t \geqslant 0 \;, \kappa > 0\;.
   \end{equ}
\end{lemma}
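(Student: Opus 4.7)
The plan is to combine Wiener chaos moment estimates, Kolmogorov's continuity criterion and Borel--Cantelli, and to assemble $\mN^{\prime}$ as a countable union of the resulting exceptional sets. It suffices to show, for every $T \in \NN_+$ and every $\kappa$ in a fixed sequence $\kappa_n \downarrow 0$, that the restricted quantity $\mathbf{N}_T^{\kappa_n}$ is almost surely finite. Three stochastic objects must be treated separately: the linear process $X$, the auxiliary process $Y$, and the family of renormalised resonant products indexed by $i \in \NN$ appearing in \eqref{e:def-N}.

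First I would control $X$ using that the Fourier modes $\hat X_t(k)$, $k \in \ZZ^2_*$, are independent centred Gaussians with variance $(1 - e^{-2|k|^2 t})/(2|k|^2)$ uniformly bounded by $|k|^{-2}$. Standard Gaussian moment bounds on Littlewood--Paley blocks and an analogous bound for time increments $X_t - X_s$, combined with Kolmogorov's theorem, give $X \in C([0,T]; \mC^{-\kappa})$ almost surely with moments of every order. For $Y$, the source term in \eqref{eqn:Y} consists of the second-chaos distribution $\mathbf{P}\div X^{\otimes 2} \in C([0,\infty); \mC^{-1-\kappa})$ (the trace part of $X \otimes X$ is proportional to the identity and is killed by $\mathbf{P}\div$, so no renormalisation is needed), the deterministic source $\mathbf{P}\Pi_\times \zeta$, and the linear-in-$Y$ term $2\mathbf{P}\div(X \sotimes Y)$. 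Once one propagates $Y \in \mC^{2\kappa}$ the product $X \sotimes Y$ is classically defined, so Schauder estimates applied to the linear equation, combined with a Grönwall argument, yield $Y \in C([0,T]; \mC^{2\kappa})$ almost surely with moment bounds depending only on the norms of $X$ and $\zeta$.

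The main obstacle will be the uniform control over $i \in \NN$ of
\[ W_i(s) := (2 \nablas \mL_{\lambda^i} X_s) \reso P^{\lambda^i}_s - \mf{r}_{\lambda^i}\, \mathrm{Id}\;. \]
For each $i$, the choice of $\mf{r}_{\lambda^i}$ is precisely the logarithmically divergent expectation of the diagonal contribution to the resonant product, so that $W_i(s)$ lies in the second homogeneous Wiener chaos. The standard $2$D computation for Anderson-type resonant products should then provide a bound of the form $\EE \| \Delta_q W_i(s) \|_{L^\infty}^{2p} \lesssim C_p\, 2^{-\kappa p q}$ with $C_p$ \emph{independent} of $i \in \NN$ and $s \in [0,T]$, together with a matching modulus of continuity in $s$; Kolmogorov then upgrades this to $\EE \|W_i\|_{C([0,T]; \mC^{-\kappa})}^{2p} \leqslant C_{p,T}$, again uniformly in $i$. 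Choosing $p$ large enough that $\sum_i C_{p,T} M^{-2p} < \infty$ for every $M > 0$, Markov's inequality and Borel--Cantelli yield the almost sure finiteness of $\sup_i \|W_i\|_{C([0,T]; \mC^{-\kappa})}$. Intersecting over $T \in \NN_+$ and $\kappa_n \downarrow 0$ of the associated full-measure events produces the null set $\mN^{\prime}$.
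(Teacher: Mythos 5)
Your overall plan (chaos moment bounds on Paley blocks, Kolmogorov in time, and a countable union over $T \in \NN_+$ and $\kappa_n \downarrow 0$) is in the right spirit, and your treatment of $X$ and $Y$ is fine; the paper itself handles those ``along similar lines'' and delegates the real work to Lemma~\ref{lem:stochastic-bds}. But the step that was the actual point of the lemma --- the supremum over $i \in \NN$ of $W_i = (2\nablas \mL_{\lambda^i}X)\reso P^{\lambda^i} - \mf{r}_{\lambda^i}\,\mathrm{Id}$ --- has a genuine gap. You derive $\EE\|W_i\|_{C([0,T];\mC^{-\kappa})}^{2p} \leqslant C_{p,T}$ with $C_{p,T}$ \emph{independent} of $i$, and then claim that Markov plus Borel--Cantelli give a.s.\ finiteness of $\sup_i \|W_i\|$ by ``choosing $p$ large enough that $\sum_i C_{p,T}M^{-2p} < \infty$''. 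That series is a sum over $i\in\NN$ of a quantity not depending on $i$, so it diverges no matter how large $p$ is; and no choice of $M$ helps. More fundamentally, uniform-in-$i$ moment bounds cannot by themselves control an infinite supremum: a sequence of i.i.d.\ standard Gaussians has uniformly bounded moments of all orders but an a.s.\ infinite supremum. Some quantitative decay in $i$, coming from the specific structure of the family $\{W_i\}$, is indispensable and is missing from your argument.

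The paper supplies exactly this in Lemma~\ref{lem:stochastic-bds}: it estimates the \emph{increments} $W_{i+1}-W_i$, using that $\mL_{\lambda^{i+1}} - \mL_{\lambda^i}$ only charges frequencies in an annulus of radius $\simeq \lambda^i$, which (after giving up a small power of regularity) produces a factor $(\lambda^i)^{-\kappa/4}$ in the second-moment bound, hence $\EE\|W_{i+1}-W_i\|^p_{\mB^{-\kappa}_{p,p}} \lesssim (\lambda^i)^{-\kappa p/2}$. Since $\lambda^i = (1+i)^{\mf a}$, these bounds are summable in $i$ for $p$ large, Borel--Cantelli applied to the increments gives a.s.\ convergence of $W_i$ (uniformly on compacts in time), and a.s.\ finiteness of $\sup_i\|W_i\|$ follows from convergence rather than from uniform bounds. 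To repair your proof you would need to add this increment (or some equivalent $i$-dependent) estimate; the uniform bound you state is necessary but not sufficient.
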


\begin{proof}
   For the terms involving $ \nabla_{\mathrm{sym}} X $ see
   Lemma~\ref{lem:stochastic-bds}. The bounds on all other terms follow along
   similar lines.
\end{proof}
For clarity, we collect all the stochastic terms required in our analysis in
the following table.

\begin{center}
\begin{tabular}{lll}
\toprule
\thead{Process} & \thead{Definition}  &\thead{Regularity} \\
\midrule
$ X $ & $ (\partial_{t} - \Delta) X = \mathbf{P} \Pi_{\times} \xi $ & $
\mC^{- \kappa}$ \\
$ Y $ & $ (\partial_{t} - \Delta) Y = \mathbf{P} \div (2 X \sotimes Y +
X^{\otimes 2}) + \mathbf{P} \Pi_{\times} \zeta$ & $ \mC^{2 \kappa} $ \\
$ Q $ & $ (\partial_{t} - \Delta) Q = 2 X $ & $ \mC^{2 - \kappa}$ \\
$ P $ & $ (- \frac{1}{2} \Delta +1 ) P = 2 \nabla_{\mathrm{sym}} X $ & $
\mC^{1 - \kappa}  $ \\
\bottomrule
\end{tabular}
\end{center}

\subsection{Recap: local well-posedness}
Before we move on, let us recall the local well-posedness result for
\eqref{eqn:w}. The proof of a very similar result can be found in
\cite{DaPrato-Debusshe-2DNS-2002} and is by now classical. For $ \gamma, T > 0 $ and $
\beta \in \RR $, we consider the Banach space
\begin{equ}
   \mM^{\gamma}_{T} \mC^{\beta} \subseteq C([0, T]; \mS^{\prime}
   (\TT^{2})) \;, \qquad \| f \|_{\mM^{\gamma}_{T} \mC^{\beta}} = \sup_{0 \leqslant t
   \leqslant T} t^{\gamma} \| f_{t} \|_{\mC^{\beta}} \;.
\end{equ}
We then say that $ w \in \mM^{\gamma} \mC^{\beta} $ is a mild solution to
\eqref{eqn:w} if
   \begin{equ}
      w_{t} = P_{t} w_{0} + \int_{0}^{t} P_{t -s}
      \mathbf{P} \div (w^{\otimes 2} + D \sotimes w + Y^{\otimes 2}
      ) \ud s \;,
   \end{equ}
   where the definition of the products $
   w^{\otimes2} $ and $ D \sotimes w $ has to be justified, depending on the
   choice of the parameters $ \beta$ and $\gamma $.

\begin{proposition}\label{prop:wp-phi}
Fix any $ 0 < \kappa < 1/2$, set $\gamma = 1 - \kappa/2 $, and assume that $ D \in C
([0, \infty) ; \mC^{- \kappa}) $ and $ Y^{\otimes 2} \in C ([0, \infty); \mC^{
2 \kappa}) $. Then for all $ u_{0} \in
   \mC^{-1 + 2\kappa} $,
   \eqref{eqn:w} admits a unique mild solution in the space $
   \mM^{\gamma/2}_{T^{\mathrm{fin}}} \mC^{3 \kappa/2}$, up to a maximal time $
   0 < T^{\mathrm{fin}}(\mathbf{L}_{t}^{\kappa}, u_{0})  \leqslant \infty$.
\end{proposition}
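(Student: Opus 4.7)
The plan is to run a Banach fixed point argument on the mild formulation
\begin{equ}
   \mathcal{M}(w)_{t} = P_{t} u_{0} + \int_{0}^{t} P_{t-s} \mathbf{P} \div\bigl( w_{s}^{\otimes 2} + D_{s} \sotimes w_{s} + Y_{s}^{\otimes 2} \bigr) \ud s
\end{equ}
inside a small ball of $\mM^{\gamma/2}_{T} \mC^{3\kappa/2}$, for $T$ chosen small enough in terms of $\mathbf{L}_{T}^{\kappa}$, $\|u_0\|_{\mC^{-1+2\kappa}}$ and $\|Y^{\otimes 2}\|_{C([0,T];\mC^{2\kappa})}$. The first task is to check that every product appearing in the nonlinearity is classically meaningful on this space: $w^{\otimes 2}$ is controlled in $\mC^{3\kappa/2}$ by $\|w\|_{\mC^{3\kappa/2}}^{2}$, while the crucial product $D \sotimes w$ is well-defined in $\mC^{-\kappa}$ since the sum of regularities $-\kappa + 3\kappa/2 = \kappa/2 > 0$ makes Bony's paraproduct decomposition converge, yielding a bounded bilinear map $\mC^{-\kappa}\times \mC^{3\kappa/2}\to \mC^{-\kappa}$. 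No paracontrolled structure is required at this stage, which is precisely why the local theory depends only on $\mathbf{L}_t^\kappa$ rather than on $\mathbf{N}_t^\kappa$.

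Next I invoke the standard Schauder estimates $\|P_{h} \mathbf{P} \div f\|_{\mC^{3\kappa/2}} \lesssim h^{-(3\kappa/2 + 1 - \alpha)/2}\|f\|_{\mC^{\alpha}}$ and $\|P_{t} u_{0}\|_{\mC^{3\kappa/2}} \lesssim t^{-\gamma/2}\|u_0\|_{\mC^{-1+2\kappa}}$, which is exactly why $\gamma = 1-\kappa/2$ was chosen. Combined with the bounds from the previous step, the nonlinear contributions produce time integrals of the form $\int_{0}^{t}(t-s)^{-a} s^{-b}\ud s$ with $a, b < 1$, handled by a Beta function computation. The key exponents are:
\begin{equs}
   w^{\otimes 2}:&\quad a = 1/2,\; b = \gamma,\quad \text{giving}\ t^{1/2 - \gamma/2}\lesssim T^{1/2-\gamma/2}\cdot t^{-\gamma/2},\\
   D \sotimes w:&\quad a = 1/2 + 5\kappa/4,\; b = \gamma/2,\quad \text{giving}\ t^{-\kappa}\lesssim T^{1/2 - 5\kappa/4}\cdot t^{-\gamma/2},\\
   Y^{\otimes 2}:&\quad a = (1-\kappa/2)/2,\quad \text{giving}\ t^{1/2+\kappa/4},
\end{equs}
all of which vanish as $T\to 0$ relative to the target weight $t^{-\gamma/2}$. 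This produces a self-mapping of a ball; bilinearity of $w^{\otimes 2}$ and $D\sotimes w$ then delivers the contraction estimate by the same Beta function computation applied to differences.

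Uniqueness of the mild solution in $\mM^{\gamma/2}_{T}\mC^{3\kappa/2}$ follows by the same contraction. To produce a \emph{maximal} solution, I paste together successive fixed point solutions: a standard continuation argument shows that the only obstruction to extending $w$ past a time $T^{\star}$ is $\limsup_{t\uparrow T^{\star}}\|w_{t}\|_{\mC^{3\kappa/2}} = +\infty$, which defines $T^{\mathrm{fin}}(\mathbf{L}_{t}^{\kappa}, u_{0})\in(0,\infty]$. The main obstacle I anticipate is purely bookkeeping, namely balancing the three singularity/weight exponents so that the convolution integrals converge and leave a strictly positive power of $T$ in front—this is what pins the admissible range of $\kappa$ (essentially $\kappa < 2/5$, comfortably implied by $\kappa < 1/2$) and the precise choices of the blow-up rate $\gamma/2$ and the target regularity $3\kappa/2$.
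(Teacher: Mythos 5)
Your scheme is the same one the paper has in mind: the paper does not spell out a proof but delegates it to the classical Da Prato--Debussche fixed point argument in the weighted space $\mM^{\gamma/2}_{T}\mC^{3\kappa/2}$, and your contraction-plus-continuation argument, with the products $w^{\otimes 2}$ and $D\sotimes w$ handled by Bony estimates (regularity sum $\kappa/2>0$) and the weight $\gamma/2$ matched to the $\mC^{-1+2\kappa}$ initial datum, is exactly that. Your exponent bookkeeping is also correct as far as it goes: the $D\sotimes w$ contribution carries the kernel singularity $(t-s)^{-1/2-5\kappa/4}$ and, after weighting, the factor $T^{1/2-5\kappa/4}$.

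There is, however, one genuine slip in the final parenthetical: the condition $\kappa<2/5$ is \emph{not} implied by $\kappa<1/2$ (the inequality goes the other way), so your argument as written only proves the proposition for $\kappa<2/5$, not on the full stated range $(0,1/2)$. This is not a cosmetic issue with your estimate that a cleverer bound would remove: the product $D\sotimes w$ can never be better than $\mC^{-\kappa}$, so mapping $\mathbf{P}\div(D\sotimes w)\in\mC^{-1-\kappa}$ into $\mC^{3\kappa/2}$ necessarily costs $(t-s)^{-(1+5\kappa/2)/2}$, which is non-integrable precisely when $\kappa\geqslant 2/5$; equivalently, $3\kappa/2$ then exceeds the best possible spatial regularity $1-\kappa$ of $w$, so the fixed point cannot close in the prescribed space. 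Since the paper fixes $\kappa$ ``small enough'' (Assumption~\ref{assu:setting}) the restriction is harmless for everything the proposition is used for, but you should either state the constraint $\kappa<2/5$ honestly (and note that the $1/2$ in the statement is loose), or lower the target regularity/adjust the weight if you insist on covering all $\kappa<1/2$; what you should not do is assert that $\kappa<1/2$ implies $\kappa<2/5$.
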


\begin{remark}\label{rem:final-phi}
By Lemma~\ref{lem:prob-space}, there exists a nullset \( \mN \subseteq
\mN^{\prime} \) such that $
\mathbf{L}^{\kappa}_{t}(\omega) < \infty  $ for all $ t \geqslant 0 $ and $
\omega \not\in  \mN $, so that the proposition above applies to our setting on $
\mN $.
   Moreover, the maximal local existence time $ T^{\mathrm{fin}} $ is the same
   as in Theorem~\ref{thm:dpd}, since mild solutions to
   \eqref{eqn:w} are
   equivalent to mild solutions to \eqref{eqn:daPrato-Debussche} through the
   mapping $ w \mapsto w + Y $.
\end{remark}

\section{A high-low energy estimate}

We now analyse the most problematic term in deriving an energy
estimate for $ w $, namely the quadratic form \eqref{eqn:qf}.
Since $ D \in \mC^{- \kappa} $,  we expect
 the solution $ w $ to be of regularity no better than $ \mC^{1 - \kappa} $, also for smooth initial
data. Hence to make sense of \eqref{eqn:qf} there is no chance in treating the two terms
$ \langle w, \Delta w \rangle $ and $ \langle w,
\mathbf{P} \div ( D \sotimes w) \rangle $ separately since both terms would be infinite. Instead, we have to exploit that there are cancellations
 between these two terms which make the quadratic form finite.
Before we continue, let us assume that  $
u_{0} \in L^{2} $: in any case Proposition~\ref{prop:wp-phi} guarantees
that $ w_{t} \in L^{2} $ for any $ t >0 $ up to the blow-up time $
T^{\mathrm{fin}} $, also if $ u_{0} $ has worse regularity.
\begin{assumption}\label{assu:L2IC}
   Throughout this section we work under the assumption that $ u_{0} \in
   L^{2} \cap \mC^{-1 + \kappa} $ for some $ \kappa >0 $.
\end{assumption}

\subsection{High frequency paracontrolled decomposition}\label{sec:hl-freq}
One way to observe the above mentioned cancellation is to look deeper into the structure of the
solution $ w $, using paracontrolled calculus to obtain a nonlinear expansion in 
terms of $ D $. Let us define \(w^{\sharp} \) by
\begin{equ}
  (\partial_{t} - \Delta) Q  = 2 X \;, \qquad Q_{0} = 0 \;, \qquad\label{e:Q}
	w  = \mathbf{P} \div( w \para Q) + w^{\sharp} \;,  
\end{equ}
where the paraproduct $ \para $ is defined in Section~\ref{sec:para}, and 
we note that $ Q \in \mC^{2 - \kappa} $.
Then $ w^{\sharp} $ solves
\begin{equ}[eqn:phi-sharp]
   \partial_{t} w^{\sharp} = \Delta w^{\sharp} + \mathbf{P} \div
   ( w^{\otimes 2} + D \sotimes w - 2 X \rpara w +
   C^{\para }(w, Q) +  Y^{\otimes 2}) \;,
\end{equ}
with the commutator $C^{\para }$ defined by
\begin{equs}[e:com]
   C^{\para}(f, g)& = (\partial_{t} - \Delta) (f \para g) - f \para
   (\partial_{t} - \Delta) g \\
   & = ( (\partial_{t}- \Delta)f ) \para g + \mathrm{Tr} [ (\nabla f)
   \para (\nabla g) ]\;.
\end{equs}
The term $ C^{\para}(w, Q) $ is expected to lie in \(\mathcal{C}^{1-
2\kappa}\) (see \cite[Lemma 2.8]{GubinelliPerkowski2017KPZ}, although here we are not using
the parabolically scaled paraproduct, so the estimate will follow along a different
line). Therefore, collecting all
regularities we expect that $ w^{\sharp} \in \mC^{1+ 2 \kappa} $,
since the worst regularity term in the divergence is given by $ Y \sotimes
w \in \mC^{2 \kappa} $, assuming $ \kappa $ sufficiently small (recall that $D = 2(X+Y)$). This
means that we have singled out the most irregular part of the solution $
w$ and we can now attempt to write an energy estimate for $
w^{\sharp} $. A na\"ive attempt will fail though, because now the pairing
\begin{equ}
   \langle w^{\sharp} ,  \Delta w^{\sharp} + \mathbf{P} \div
   ( w^{\otimes 2} + D \sotimes w - 2 X \rpara w +
   C^{\para }(w, Q) +  Y^{\otimes 2})\rangle 
\end{equ}
is cubic in the norm of $ w $, since the nonlinear term
\begin{equ}
   \langle w^{\sharp} , \mathbf{P} \div ( w^{\otimes 2}) \rangle
   =\langle w^{\sharp} , \mathbf{P} \div ( (w - w^{\sharp}
   )^{\otimes 2} + 2 w \sotimes (w - w^{\sharp})) \rangle 
\end{equ}
does not cancel out completely. On the other hand, if we knew that the
irregular part $ w - w^{\sharp} $ is of order one in some
appropriate norm, then we would be able to obtain an estimate that is quadratic
in the norm of $ w $, or in this case equivalently the norm of $ w^{\sharp} $.
This is our aim and the approach we will follow to “make” the
irregular part small is to take into account the paracontrolled structure only
in high frequencies, where “high” will be defined in terms of the $ L^{2} $ norm of $ w $.

\subsubsection{High and low frequency projections}
We start by introducing high and low frequency projections, together with
some simple estimates.

\begin{definition}\label{def:cut-off}
For any \( \lambda> 0 \), define the projections
\begin{equ}
  \mH_{\lambda}\colon \mS^{\prime}(\TT^{2}; \RR^{2}) \to \mS^{\prime} (\TT^{2};
\RR^{2}) \qquad    \mathcal{L}_{\lambda}  \colon \mS^{\prime}(\TT^{2}; \RR^{2}) \to
\mS(\TT^{2}; \RR^{2})
\end{equ}
by respectively
$\mH_{\lambda} w = \check{\mf{h}}_{\lambda} * w$, and $\mL_{\lambda}
w = w - \mH_{\lambda} w = \check{\mf{l}}_{\lambda} * w,$
where \( \check{\mf{h}}_{\lambda}, \check{ \mf{l}}_{\lambda} \in \mS^{\prime} (\TT^{2};
\RR^{2}) \) are defined as the
Fourier inverses 
\[ 
   \check{\mf{h}}_{\lambda}(x) = \mF^{-1} \big( \mf{h}(| \cdot |/ \lambda)
   \big)(x)\;, \qquad \check{\mf{l}}_{\lambda} (x) =  \mF^{-1} \big( \mf{l}(| \cdot |/ \lambda)
   \big)(x) \;.
\] 
for smooth functions \( \mf{h}, \mf{l} \colon [0, \infty) \to [0,
\infty) \) satisfying
\begin{equ}
\mf{h}(r) = 1 \;, \ \text{ if } r \geqslant 1 \;, \qquad \mf{h}(r)
= 0\;, \ \text{ if } r \leqslant \frac{1}{2} \;, \qquad \mf{l} = 1 -
\mf{h} \;.
\end{equ}
\end{definition}
The next result states that
we can gain regularity in low frequencies by paying a price in powers of $
\lambda $. The spaces $ \mC^{\alpha}_{p} $, for $ p \in [1, \infty] $ and $
\alpha \in \RR $ are the Besov spaces with integrability parameter $ p
$ introduced in Appendix~\ref{sec:function-spaces}.
\begin{lemma}\label{lem:low-freq}
  For any $p \in [1, \infty]$ and $\beta > \alpha$ one can estimate
  uniformly
  over $\lambda \geqslant 1$:
  \[ \| \mL_{\lambda} \varphi \|_{\mC^{\beta}_p} \lesssim \lambda^{
   \beta - \alpha} \| \varphi \|_{\mC^{\alpha}_p}\;, \qquad \forall \varphi \in
  \mC^{\alpha}_{p} \;. \]
\end{lemma}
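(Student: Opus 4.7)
The plan is to work in the Littlewood--Paley characterisation of the spaces $\mC^\alpha_p$, under which $\|\varphi\|_{\mC^\alpha_p} \simeq \sup_{j \geqslant -1} 2^{j\alpha} \|\Delta_j \varphi\|_{L^p}$, where $(\Delta_j)_j$ denotes the standard dyadic projectors on $\TT^2$ (see Appendix~\ref{sec:function-spaces}). The estimate then amounts to a concrete Bernstein-type inequality applied to the frequency-truncated function $\mL_\lambda \varphi$.

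First I would observe that, since the multiplier $\mf{l}(|k|/\lambda)$ is supported in $\{|k| \leqslant \lambda\}$, the operator $\mL_\lambda$ commutes with each $\Delta_j$ and annihilates all frequencies $|k| > \lambda$. Consequently there exists a universal constant $C_0$, depending only on the LP partition, such that $\Delta_j \mL_\lambda \varphi = 0$ as soon as $2^j \geqslant C_0 \lambda$. Next I would establish the uniform $L^p$ bound $\|\mL_\lambda \varphi\|_{L^p} \lesssim \|\varphi\|_{L^p}$, valid for all $p \in [1,\infty]$ and $\lambda \geqslant 1$: by Young's convolution inequality this reduces to $\|\check{\mf{l}}_\lambda\|_{L^1(\TT^2)} \lesssim 1$ uniformly in $\lambda$, which follows by scaling on $\RR^2$ (where $\check{\mf{l}}_\lambda(x) = \lambda^2 \check{\mf{l}}_1(\lambda x)$ has $L^1(\RR^2)$ norm independent of $\lambda$, by smoothness and compact frequency support of $\mf{l}$) together with Poisson summation to transfer the bound to $\TT^2$.

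Combining the two ingredients, for every $j$ with $\Delta_j \mL_\lambda \varphi \not\equiv 0$, i.e.\ $2^j \leqslant C_0 \lambda$, one writes
\[
2^{j\beta} \|\Delta_j \mL_\lambda \varphi\|_{L^p} = 2^{j(\beta-\alpha)}\bigl(2^{j\alpha} \|\mL_\lambda \Delta_j \varphi\|_{L^p}\bigr) \lesssim 2^{j(\beta-\alpha)} \|\varphi\|_{\mC^\alpha_p} \lesssim \lambda^{\beta-\alpha} \|\varphi\|_{\mC^\alpha_p},
\]
where the last step uses $\beta > \alpha$ to bound $2^{j(\beta-\alpha)}$ by its value $\sim \lambda^{\beta-\alpha}$ at the top scale $j \sim \log_2 \lambda$. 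Taking the supremum over $j$ yields the claim.

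The argument presents no serious obstacle; the only mildly technical point is the uniform $L^1$ bound on the kernel $\check{\mf{l}}_\lambda$, and the hypothesis $\beta > \alpha$ is essential since otherwise the sum over dyadic scales up to $\lambda$ is no longer dominated by its top term.
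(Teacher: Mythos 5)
Your proof is correct and follows essentially the same route as the paper: restrict to the dyadic blocks $2^j \lesssim \lambda$ surviving the projection, bound $\|\mL_\lambda \Delta_j \varphi\|_{L^p} \lesssim \|\Delta_j\varphi\|_{L^p}$ by Young's inequality, and use $\beta > \alpha$ to absorb $2^{j(\beta-\alpha)}$ into $\lambda^{\beta-\alpha}$ at the top scale. The only difference is that you spell out the uniform $L^1$ bound on the kernel $\check{\mf{l}}_\lambda$, which the paper leaves implicit in its appeal to Young's inequality.
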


\begin{proof}
   We can write estimate, for some $ c > 0 $ by Young's inequality for
   convolutions applied to $ \mL_{\lambda} \Delta_{j} \varphi =
   \check{\mf{l}}_{\lambda} * \Delta_{j} \varphi $
  \begin{equ}
     \| \mL_{\lambda} \varphi \|_{\mC^{\beta}_p}  =  \sup_{j \leqslant  
    \log_2 (\lambda) + c } 2^{j \beta} \| \Delta_j \varphi \|_{L^p} \lesssim 
    \lambda^{\beta - \alpha} \sup_{j \leqslant \log_2 (\lambda) + c } 2^{j
    \alpha} \| \Delta_j \varphi \|_{L^p} \;,
  \end{equ}
  from which the result immediately follows.
\end{proof}
Similarly we can gain powers of $ \lambda $ in high
frequencies by paying a price in regularity.
\begin{lemma}\label{lem:high-freq-reg-loss}
   For any $ p \in [1, \infty] $ and $\beta > \alpha$ one can estimate
   uniformly  $\lambda \geqslant 1$:
   \[ \| \mH_{\lambda} \varphi \|_{\mathcal{C}^{\alpha}_p} \lesssim
   \lambda^{\alpha- \beta} \| \varphi \|_{\mathcal{C}^{\beta}_p} \;, \qquad
\forall \varphi \in \mC^{\beta}_{p} \;. \]
\end{lemma}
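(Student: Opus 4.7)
The plan is to mirror the proof of Lemma~\ref{lem:low-freq}, this time exploiting that $\mH_{\lambda}$ kills low frequencies rather than keeping them. First I would note that $\mH_{\lambda}$ is a Fourier multiplier, so it commutes with the Littlewood--Paley blocks $\Delta_{j}$, giving $\Delta_{j} \mH_{\lambda}\varphi = \mH_{\lambda} \Delta_{j}\varphi$. Since the symbol $\mf{h}(|\cdot|/\lambda)$ vanishes for $|k| \leqslant \lambda/2$, while $\widehat{\Delta_{j}\varphi}$ is supported in an annulus of radius $\sim 2^{j}$, the product $\mf{h}(|k|/\lambda)\widehat{\Delta_{j}\varphi}(k)$ vanishes identically unless $2^{j} \gtrsim \lambda$, i.e.\ unless $j \geqslant \log_{2}(\lambda) - c$ for some absolute $c > 0$ depending only on the profile of $\mf{h}$ and the Littlewood--Paley partition.

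Next I would establish that $\mH_{\lambda}$ is bounded on each Littlewood--Paley block uniformly in $\lambda$ and $j$. Concretely, I would pick an auxiliary smooth compactly supported annular bump $\tilde{\rho}$ that equals $1$ on the support of the multiplier defining $\Delta_{j}$, so that $\Delta_{j}\varphi = \tilde{\rho}_{j} * \Delta_{j}\varphi$ with $\tilde{\rho}_{j}(x) = 2^{2j}\tilde{\rho}(2^{j}x)$, and then write
\begin{equ}
   \mH_{\lambda} \Delta_{j}\varphi = (\check{\mf{h}}_{\lambda} * \tilde{\rho}_{j}) * \Delta_{j}\varphi .
\end{equ}
Young's inequality then reduces the claim to the uniform bound $\|\check{\mf{h}}_{\lambda} * \tilde{\rho}_{j}\|_{L^{1}} \lesssim 1$, which follows by rescaling and the fact that the symbol $\mf{h}(|k|/\lambda)\widehat{\tilde{\rho}}(k/2^{j})$ is a Schwartz function at scale $\max(\lambda, 2^{j})$ with uniformly controlled seminorms (only the regime $2^{j} \sim \lambda$ is nontrivial, and there both scales are comparable). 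This yields $\|\mH_{\lambda}\Delta_{j}\varphi\|_{L^{p}} \lesssim \|\Delta_{j}\varphi\|_{L^{p}}$ uniformly.

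Finally I would combine the two observations. For any $j$ at which $\Delta_{j}\mH_{\lambda}\varphi$ is nonzero we have $j \geqslant \log_{2}(\lambda) - c$, and hence
\begin{equ}
   2^{j \alpha} \|\Delta_{j} \mH_{\lambda}\varphi\|_{L^{p}} \lesssim 2^{j(\alpha - \beta)} \cdot 2^{j \beta}\|\Delta_{j}\varphi\|_{L^{p}} \lesssim \lambda^{\alpha - \beta} \|\varphi\|_{\mC^{\beta}_{p}} ,
\end{equ}
where we used $\alpha < \beta$ to maximise $2^{j(\alpha - \beta)}$ at the smallest admissible $j \sim \log_{2}\lambda$. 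Taking the supremum over $j$ gives the desired estimate.

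The only genuine technical step is the uniform boundedness of $\mH_{\lambda}$ on frequency-localised functions; everything else is bookkeeping. I do not anticipate this to be a serious obstacle, since it is a standard Littlewood--Paley / Mikhlin-type fact that smooth radial cutoffs define uniformly bounded Fourier multipliers on $L^{p}$ for $p \in [1, \infty]$ when applied to single Paley blocks, and the authors implicitly used the analogous fact in the previous lemma.
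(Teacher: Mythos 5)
Your proof is correct and follows essentially the same route as the paper: commute $\mH_{\lambda}$ with the Paley blocks, observe that only blocks with $j \geqslant \log_{2}(\lambda) - c$ survive, prove a $\lambda$-uniform $L^{p}$ bound on each block, and trade $2^{j(\alpha-\beta)} \lesssim \lambda^{\alpha-\beta}$. The only cosmetic difference is in the uniform block bound, which the paper obtains more directly by writing $\mH_{\lambda} = \mathrm{Id} - \mL_{\lambda}$ and applying the triangle inequality together with Young's inequality for the rescaled low-pass kernel, whereas you go through an auxiliary annular bump and a Mikhlin-type estimate.
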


\begin{proof}
   As above, we can bound for some $ c > 0 $ 
  \begin{equ}
     \| \mH_{\lambda} \varphi \|_{\mathcal{C}^{\alpha}_p}  = 
     \sup_{j \geqslant - 1} 2^{j \alpha} \| \Delta_j \mH_{\lambda}
    \varphi \|_{L^p} =  \sup_{j \geqslant \log_2 (\lambda) - c} 2^{j \alpha} \|
    \Delta_j \mH_{\lambda} \varphi \|_{L^p} .
  \end{equ}
  Now we can simply estimate
  \begin{equs}
     \| \Delta_j \mH_{\lambda}  \varphi \|_{L^p}  =  \| \mH_{\lambda} \Delta_j
     \varphi\|_{L^p}
     & \leqslant  \| \Delta_j \varphi \|_{L^p} + \| \lambda^{d}
     (\mathcal{F}^{-1} \mf{h}) (\lambda \cdot) \ast \Delta_j \varphi 
    \|_{L^p} \\
    & \leqslant  2 \| \Delta_j \varphi \|_{L^p} \;.
  \end{equs}
  Plugging this into the previous bound we obtain:
  \begin{equ}
     \| \mH_{\lambda} \varphi \|_{\mathcal{C}^{\alpha}_p}  = 
     \sup_{j \geqslant  \log_2 (\lambda)-c} 2^{j \alpha} \| \Delta_j
     \mH_{\lambda} \varphi \|_{L^p} \lesssim  \lambda^{\alpha- \beta} \|
     \varphi \|_{\mathcal{C}^{\beta}_p} \;,
  \end{equ}
  as required.
\end{proof}

\subsubsection{Construction of the high-frequency paracontrolled
decomposition}
For a time-dependent frequency
level $ \lambda_{t} \geqslant 1 $ that
will be introduced later on and with $Q$ as in \eqref{e:Q}, 
let us define the high-frequency control \( Q^{\mH} \) and the high-frequency
component $ w^{\mH} $ of $ w $ by
\begin{equ}[eqn:def-phi-high]
   Q^{\mH}_{t} = \mH_{\lambda_{t} }Q_{t} \;, \qquad
w^{\mH} = \mathbf{P} \div( w \para Q^{\mH}) \;, \qquad
w^{\mL} = w - w^{\mH} \;.
\end{equ}
Clearly, $ w^{\mL} $ should be interpreted as the low frequency component of $
w $.
In view of the two lemmas above, the gain in this decomposition is that if the frequency $
\lambda_{t} $ is large, then the control $ Q^{\mH} $ is relatively small,
provided that we are willing to measure it with a worse regularity. 
In particular, we can bound by
Lemma~\ref{lem:high-freq-reg-loss}:
\begin{equ}[e:QH-bd]
   \| Q_{t}^{\mH} \|_{\mC^{2 - \kappa - \delta}} \lesssim \lambda_{t}^{-
   \delta} \| Q_{t} \|_{\mC^{2- \kappa}} \lesssim \lambda_{t}^{-  \delta} \|
   X_{t} \|_{\mC^{- \kappa}} \;.
\end{equ}
To make sure that $ w^{\mH} $ is of order one, independently of the size
of $ w $, it would be convenient to choose $ \lambda_{t} \simeq (1 + \|
w_{t} \|) $. Of course, such a decomposition eventually shifts
the problem to the analysis of the low-frequency component $ w^{\mL} $.
For this purpose, such a choice of $ t \mapsto \lambda_{t}
$ is not entirely convenient, because in deriving an equation for $
w^{\mL} $ we will end up differentiating $ \lambda_{t} $ in time, which
leads to a tedious term involving quantities such as the quadratic form
\eqref{eqn:qf}, which is precisely what we set out to avoid. Instead we consider a
discretised version of $ t \mapsto (1 + \| w_{t} \|)^{\mf{a}} $, for a
suitable $ \mf{a} > 0 $. 

\begin{definition}\label{def:stop-time-and-lambda}
   Fix a parameter $ \mf{a}> 0 $ and consider any initial condition $
   u_{0} $ satisfying Assumption~\ref{assu:L2IC}. Let us introduce the sequence of stopping times $ \{ T_{i} \}_{i \in \NN} $, with
\begin{equ}
   0 \eqdef T_{0} \leqslant  T_{1} \leqslant  \dots \leqslant  T_{i} \leqslant  \dots \;,
\end{equ}
defined for any $ \omega \in \Omega $ and $ u_{0} \in L^{2} \cap \mC^{-1 +
\kappa} $ as follows. 
For $ i \in \NN \setminus \{ 0 \} $ define
\begin{equ}[eqn:def-T]
   T_{i+1} (\omega, u_{0}) = \inf \{ t \geqslant T_{i}  \; \colon \; \| w_{t} \|
   \geqslant i+1 \} \wedge T^{\mathrm{fin}} (\omega, u_{0}) \;,
\end{equ}
with $ w $ solving \eqref{eqn:w} and $ T^{\mathrm{fin}} (\omega, u_{0}) $ as in
Theorem~\ref{thm:dpd}.
Then if we set
\begin{equ}[e:i0]
   i_{0}(u_{0} ) = \max \{ i \in \NN  \; \colon \; i \leqslant \| u_{0} \| \}
   \;, 
\end{equ}
it holds that
$T_{i} = 0$ if and only if $i \leqslant i_{0} (u_{0})$.
Finally, for any $ i \in \NN$ set
\begin{equ}[e:def-lambda]
   \lambda^{i} \eqdef (1 + i)^{\mf{a}}\;, \qquad \lambda_{t}
   \eqdef \begin{cases}
      (1 + \lceil \| u_{0} \| \rceil)^{\mf{a}} \;, & \text{ if } t = 0  \;,\\
      (1 + \| w_{T_{i}} \|)^{\mf{a}}\;, & \text{ else, for all } \, T_{i} \leqslant
      t < T_{i+1} \;.
   \end{cases}
\end{equ}
\end{definition}
Since $ u_{0}\in L^{2} $ by Assumption~\ref{assu:L2IC}, we have $
i_{0}(u_{0}) < \infty$. Moreover $ \lambda_{t} $ is defined so that 
$ \lambda_{t} = \lambda^{i} $ for all $ T_{i} \leqslant t <
T_{i+1} $ and $ i \geqslant i_{0}(u_{0}) $.

\begin{remark}\label{rem:a}
We have defined the discretised frequency level $ t \mapsto \lambda_{t} $ in
such a way that $ \lambda_{t} \in \{ \lambda^{i} \}_{i \in \NN} $. In
particular, it belongs to a fixed countable set \emph{independent of initial
conditions}, which will be of use in the approximation of the singular operator
in Section~\ref{sec:sym-op} (else the null set in
Lemma~\ref{lem:stochastic-bds} could depend on the initial condition $
u_{0} $).
   Moreover, we introduced the parameter $ \mf{a} $, because it turns out that $
   \mf{a} = 1 $ (arguably the most natural choice) is not enough for our
   purposes. Instead choosing any $ \mf{a} \in (2, 3] $ is sufficient. We left
   $ \mf{a}$ as a free parameter, so that the reader can follow at what point the
   condition $ \mf{a} > 2 $ is required.
\end{remark}
Next we make use of the structure we have introduced so far to control the
high-frequency term $ w^{\mH} $.

\begin{lemma}\label{lem:high-feq-ref}
   For any $ \delta > 0$ and $\omega \not\in \mN^{\prime} $ there exists a $ C(\delta) > 0 $ such that
   \begin{equ}
      \| w_{t}^{\mH}(\omega) \|_{H^{1 - 2 \kappa-  \delta}} \leqslant
      C(\delta) (1+ \| w_{t}(\omega)\|)^{1 - \mf{a} \delta}
      \mathbf{N}_{t}^{\kappa}(\omega) \;, \qquad \forall 0
     \leqslant t < T^{\mathrm{fin}}(\omega, u_{0}) \;.
   \end{equ}

\end{lemma}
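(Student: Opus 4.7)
The plan is to estimate $ w^{\mH} $ directly from its defining identity $ w^{\mH} = \mathbf{P}\div(w \para Q^{\mH}) $, trading the regularity gap afforded by $ \mH_{\lambda_t} $ for powers of $ \lambda_t^{-1} $ via Lemma~\ref{lem:high-freq-reg-loss}, and then relating $ \lambda_t $ to $ \| w_t \| $ through the stopping-time structure of Definition~\ref{def:stop-time-and-lambda}.

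First, boundedness of $ \mathbf{P} $ on Sobolev spaces combined with the one-derivative cost of $ \div $ gives $ \| w^{\mH}_t \|_{H^{\sigma - 1}} \lesssim \| w_t \para Q^{\mH}_t \|_{H^{\sigma}} $ for any $ \sigma \in \RR $. I would then apply the standard paraproduct bound
\begin{equ}
   \| f \para g \|_{H^{\alpha - \epsilon'}} \lesssim \| f \|_{L^2} \| g \|_{\mC^{\alpha}}\;,
\end{equ}
valid for any $ \alpha \in \RR $ and $ \epsilon' > 0 $ (with implicit constant depending on $ \epsilon' $), which follows from the block estimate $ \| \Delta_k (f \para g) \|_{L^2} \lesssim 2^{-k \alpha} \| f \|_{L^2} \| g \|_{\mC^{\alpha}} $ together with summation of the geometric series $ \sum_k 2^{-2k\epsilon'} $. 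Coupling this with Lemma~\ref{lem:high-freq-reg-loss} applied to $ Q_t \in \mC^{2-\kappa} $ gives, for $ \sigma + \epsilon' < 2 - \kappa $,
\begin{equ}
   \| w^{\mH}_t \|_{H^{\sigma - 1}} \lesssim \| w_t \| \cdot \lambda_t^{(\sigma + \epsilon') - (2 - \kappa)} \| Q_t \|_{\mC^{2-\kappa}}\;.
\end{equ}
Specialising to $ \sigma = 2 - 2\kappa - \delta $ and $ \epsilon' = \kappa $ produces an exponent $ -\delta $ on $ \lambda_t $ and the target regularity index $ 1 - 2\kappa - \delta $ on the left-hand side.

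Finally, the discretised frequency level satisfies $ \lambda_t \gtrsim (1 + \| w_t \|)^{\mf{a}} $: for $ t \in [T_i, T_{i+1}) $ with $ i \geqslant i_0(u_0) + 1 $, continuity of $ s \mapsto \| w_s \| $ forces $ \| w_{T_i} \| = i $, while $ \| w_t \| < i + 1 $ by definition, so $ \lambda_t = (1 + i)^{\mf{a}} \gtrsim (1 + \| w_t \|)^{\mf{a}} $; the boundary case $ i = i_0(u_0) $ is analogous via $ \lambda_t = (1 + \lceil \| u_0 \| \rceil)^{\mf{a}} $ and $ \| w_t \| \leqslant i_0(u_0) + 1 $. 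Combined with $ \| Q_t \|_{\mC^{2-\kappa}} \lesssim \mathbf{N}_t^{\kappa} $ (via Schauder estimates on $ (\partial_t - \Delta) Q = 2X $ together with $ \sup_{s \leqslant t} \| X_s \|_{\mC^{-\kappa}} \leqslant \mathbf{N}_t^{\kappa} $), this yields
\begin{equ}
   \| w^{\mH}_t \|_{H^{1 - 2\kappa - \delta}} \lesssim \| w_t \| (1 + \| w_t \|)^{-\mf{a}\delta} \mathbf{N}_t^{\kappa} \leqslant (1 + \| w_t \|)^{1 - \mf{a}\delta} \mathbf{N}_t^{\kappa}\;,
\end{equ}
as claimed. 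The main delicate point is the bound on $ \| Q_t \|_{\mC^{2-\kappa}} $: the naive Schauder integral $ \int_0^t (t-s)^{-1} \ud s $ diverges, so either a small loss of regularity must be absorbed into $ \epsilon' $ (so that one only works with $ \mC^{2-\kappa-\epsilon'} $), or an implicit $ Q $-contribution must be included in $ \mathbf{N}_t^{\kappa} $ as part of the "similar lines" mentioned in the proof of Lemma~\ref{lem:prob-space}. Everything else is a routine consequence of standard paraproduct analysis and the stopping-time construction.
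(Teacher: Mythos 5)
Your argument is correct and is essentially the paper's proof: bound $\|w^{\mH}_t\|$ through the paraproduct estimate $\|w_t\para Q^{\mH}_t\|\lesssim\|w_t\|\,\|Q^{\mH}_t\|_{\mC^{2-\kappa-\delta}}$ (Lemma~\ref{lem:parap} plus the embedding $\mC^{\alpha}_2\subseteq H^{\alpha-\kappa}$, which is exactly your block-summation step), gain $\lambda_t^{-\delta}$ from Lemma~\ref{lem:high-freq-reg-loss}, and use the stopping-time definition of $\lambda_t$ to convert this into $(1+\|w_t\|)^{1-\mf{a}\delta}$. The ``delicate point'' you flag about $\|Q_t\|_{\mC^{2-\kappa}}$ is handled in the paper simply by writing $\|Q_t\|_{\mC^{2-\kappa}}\lesssim\|X_t\|_{\mC^{-\kappa}}$ (cf.\ \eqref{e:QH-bd} and the table listing $Q\in\mC^{2-\kappa}$ among the stochastic objects controlled via Lemma~\ref{lem:prob-space}), so either of your proposed fixes is consistent with the intended reading and does not change the argument.
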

Observe that the formulation of the lemma above allows for $ t $ to depend on $
\omega $, and does not require it to be a stopping time. 

\begin{proof}
   The estimate follows from Lemma~\ref{lem:parap},
   Lemma~\ref{lem:high-freq-reg-loss}, the
   definition of $ w^{\mH} $ in \eqref{eqn:def-phi-high} and $
   \lambda_{t} $ in Definition~\ref{def:stop-time-and-lambda}, since
\begin{equs}
   \| w^{\mH}_{t} \|_{\mC^{1 - \kappa - \delta}_{2}} & \lesssim \| w_{t}\para
   Q^{\mH}_{t} \|_{\mC^{2 - \kappa - \delta}_{2}} \lesssim \|
   w_{t} \| \| Q^{\mH}_{t} \|_{\mC^{2 - \kappa - \delta}} \\
   & \lesssim \| w_{t} \| \lambda^{-
   \delta}_{t} \| Q_{t} \|_{\mC^{2 - \kappa }}
   \lesssim (1+ \| w_{t} \|)^{1 - \mf{a} \delta} \| X_{t} \|_{\mC^{- \kappa}} \;.
\end{equs}
Here, in the first estimate of the second line we made use of the continuous
embedding $ \mC^{\alpha}_{2} \subseteq H^{\alpha - \kappa}_{2} $ for any $
\kappa > 0 $.
\end{proof} 

\subsection{Structure of the low-frequency energy estimate}

In a nutshell, the question of global existence amounts then to proving that $ \lim_{i \to
\infty} T_{i} = \infty$ (of course we still have to prove that $
T^{\mathrm{fin}}  $ coincides with the blow-up time of the $ L^{2} $ norm). To
obtain such a result we now establish an $
L^{2} $ energy estimate on the low-frequency component $ w^{\mL} $.
Let us fix an $ i \in \NN, i > i_{0}$ and consider $ t \in [T_{i}, T_{i+1}) $, whenever
$ T_{i}< T_{i +1} $, with the stopping
times $ T_{i} $ defined as in \eqref{eqn:def-T}. We find, in analogy to
\eqref{eqn:phi-sharp}
\begin{equs}[e:wL]
   \partial_{t} w^{\mL} = \Delta w^{\mL} &+ \mathbf{P} \div
   (w^{\otimes 2} + D \sotimes w - 2 ( \mH_{\lambda_{t}} X ) \rpara w)\\
   & + \mathbf{P} \div( C^{\para }(w, Q^{\mH}) +  Y^{\otimes 2}) \;,
\end{equs}
with initial condition $ w^{\mL}_{0} = w_{0} - \mathbf{P} \div ( w_{0} \para
Q^{\mH}_{0}) = u_{0} $, so that since $D = 2(X+Y)$
\minilab{e:fourTerms}
\begin{equs}
   \partial_{t} \| w^{\mL} \|^{2} &=  2\langle w^{\mL}, \Delta
   w^{\mL} +  \div ( 2 ( \mL_{\lambda_{t}} X ) \sotimes
   w^{\mL}) \rangle \label{eqn:bd-operator} \\ 
   &\qquad +  2\langle w^{\mL},  \div ( 2
   (\mH_{\lambda_{t}} X) \sotimes w^{\mL} - 2
   (\mH_{\lambda_{t}} X ) \rpara w^{\mL} ) \rangle   \label{eqn:bd-interpolation} \\
   &\qquad + 2 \langle w^{\mL},  \div ( 2 X \sotimes w^{\mH} - 2
   (\mH_{\lambda_{t}} X ) \rpara w^{\mH} )\rangle\label{eqn:bd-3} \\ 
   &\qquad + 2 \langle w^{\mL},   \div
   (w^{\otimes 2} + 2 Y \sotimes w  +
   C^{\para }(w, Q^{\mH}) +  Y^{\otimes 2})\rangle \;, \label{eqn:bd-4}
\end{equs}
with $ C^{\para} $ as in \eqref{e:com}.
We will treat the four terms in \eqref{e:fourTerms} separately.
The term (\ref{eqn:bd-operator}) gives rise to a paracontrolled
quadratic form, which will need logarithmic (in $ \lambda_{t} $)
renormalisation. To bound the cubic term in
\eqref{eqn:bd-4} we use our decomposition, in combination with
Lemma~\ref{lem:high-feq-ref}.

Let us start with the quadratic form in \eqref{eqn:bd-operator}. Since
both $ X $ and $ w^{\mL} $ are divergence-free, we have
\begin{equs}
   \langle w^{\mL}, \tf{1}{2}  \Delta w^{\mL} + \div ( 2 (
   \mL_{\lambda_{t}} X_{t} ) \sotimes w^{\mL}) \rangle & = \langle w^{\mL},
   \tf{1}{2}\Delta w^{\mL} + [ 2 \nabla (\mL_{\lambda_{t}} X_{t}
   ) ] w^{\mL}\rangle \\
   & = \langle w^{\mL}, \tf{1}{2} \Delta
   w^{\mL} + [ 2 \nabla_{\mathrm{sym}} (\mL_{\lambda_{t}} X_{t}) ]
   w^{\mL}\rangle\;.
\end{equs}
The factor $ 1/2 $ in front of the Laplacian is not a typo, we will split the
Laplacian into two terms, see \eqref{e:lap-split} below. 
This leads us to consider the following time-dependent family of operators:
\begin{equ}
   \mA_{t} \eqdef \frac{1}{2} \Delta +  2 \nabla_{\mathrm{sym}} X_{t} -
   \infty\;,
   \qquad \forall t \geqslant 0 \;,
\end{equ}
where the ``$ \infty $'' indicates the necessity of renormalisation. More precisely, 
$ \mA_{t} $ will be constructed as the limit as $ \lambda \to\infty$ of the operators:
\begin{equ}[eqn:def-A]
   \mA_{t}^{\lambda} \eqdef \frac{1}{2} \Delta +
   2\nabla_{\mathrm{sym}} \mL_{\lambda} X_{t} - \mf{r}_{\lambda} (t)
   \mathrm{Id} \;, \qquad \forall t \geqslant 0 \;.
\end{equ}
Here $ \mf{r}_{\lambda}(t) $ is the renormalisation constant defined in
\eqref{eqn:ren-const} below (the fact that such a limit exists is the content of
Proposition~\ref{prop:def-A} and Lemma~\ref{lem:stochastic-bds}).
For clarity let us formally write the action of these operators in
components:
\begin{equ}[eqn:def-A-l]
   (\mA_{t}^\lambda w)_{i}  =  \sum_{j = 1}^{2} \delta_{i, j} \frac{1}{2}\Delta w_{j} +
   [2 (\nabla_{\mathrm{sym}} \mL_{\lambda} X_{t})_{i, j} - \mf{r}_{\lambda}(t) \delta_{i, j} ] w_{j} \;, \quad \forall w
   \in D( \mA_{t}) \;.
\end{equ}
This allows us to rewrite \eqref{eqn:bd-operator} as
\begin{equs}[e:lap-split]
   \langle w^{\mL}, \Delta w^{\mL} + & \div ( 2 (
   \mL_{\lambda_{t}} X_{t} ) \sotimes w^{\mL}) \rangle \\
& = - \frac{1}{2}
   \| w^{\mL} \|_{H^{1}}^{2} +  \langle
   w^{\mL}, \mA_{t}^{\lambda_{t}} w^{\mL}\rangle +
   \mf{r}_{\lambda_{t}} \| w^{\mL} \|^{2} \;.
\end{equs}
The remaining terms in \eqref{e:fourTerms} will be treated as perturbations of this term. 
At this point we can thus already provide the heuristics of our approach, assuming
\eqref{eqn:bd-interpolation}--\eqref{eqn:bd-4} vanish.
In this case we are left with the following bound, from the
definition of $ \mf{r}_{\lambda_{t}} (t) $ in
\eqref{eqn:ren-const} (where the constant \( c >0 \) appears) for $ t \in
[T_{i}, T_{i+1}) $
\begin{equs}
   \partial_{t} \| w^{\mL} \|^{2} & = - \| w^{\mL} \|_{H^{1}}^{2} + 2 \langle w^{\mL},
   \mA_{t}^{\lambda_{t}} w^{\mL} \rangle + 2 \mf{r}_{\lambda_{t}} (t) \|
   w^{\mL} \|^{2} \\
   & \leqslant  - \| w^{\mL} \|_{H^{1}}^{2} + 2 \langle w^{\mL},
   \mA_{t}^{\lambda_{t}} w^{\mL} \rangle + \mf{a}c \log{(1 + \| w_{T_{i}}
   \|^{2})} \|
   w^{\mL} \|^{2} \;.
\end{equs}
In addition, by Proposition~\ref{prop:def-A}, there exists a continuous map $
\mathbf{m} \colon \RR_{+} \to \RR_{+} $ such that the operator $ \mA_{t} -
\mathbf{m}_{t} $ is negative. By Lemma~\ref{lem:high-feq-ref}
with $ \delta = 1/ \mf{a} $ we additionally have $ \| w^{\mH} \| \simeq 1  $ so that $ \| w \| \simeq \|
w^{\mL} \| + 1$, and we conclude
\begin{equ}
   \partial_{t} \| w^{\mL} \|^{2}  \leqslant 2 \mathbf{m}_{t}
   \| w^{\mL} \|^{2} +  c \log{(1 + \| w^{\mL}_{T_{i}}
   \|^{2})} \| w^{\mL} \|^{2} \;.
\end{equ}
Roughly, this calculation shows that the norm grows at most like the solution
to the ODE
\begin{equ}
   \dot{z}_{t} = c \log{ (z_{t})} z_{t} \;,
\end{equ}
for some $ c >0 $, which has double-exponential growth but does not blow up in finite time.
More rigorously, we obtain that for $ t \in [T_{i}, T_{i+1}) $ 
\begin{equ}
   \| w^{\mL}_{t} \|^{2} \leqslant \| w_{T_{i}}^{\mL}
   \|^{2} \exp \left\{ \int_{T_{i}}^{t} 2 \mathbf{m}_{s} +  c \log{(1 + \|
   w^{\mL}_{T_{i}} \|^{2})} \ud s \right\} \;.
\end{equ}
Now for $ t \geqslant 0 $ write $
\overline{\mathbf{m}}_{t} = \max \{ \mathbf{m}_{s}  \; \colon \; s \in
[0, t] \} $. If for the sake of our argument we assume that the blow-up
time $ T^{\mathrm{fin}} $ coincides with the blow-up time of the $
L^{2} $ norm of $ w $ (we will prove this in Corollary~\ref{cor:explosion-condition}),
then our aim is to prove that $ T_{i} \uparrow \infty $. In particular, if $
T^{\mathrm{fin}} < \infty $, we would have $
T^{i} < T^{\mathrm{fin}} < \infty $ for all $ i \in \NN $. On the other hand,
we can bound for any $ i \geqslant 1 $, by using that $
\| w^{\mL}_{T_{i}} \|^{2} \simeq i^{2}$ and $ \Delta_{i} = T_{i+1} - T_{i}$ 
\begin{equ}
   (i +1)^{2} \leqslant i^{2} \exp \left\{ \Delta_{i} \left[
      2 \overline{\mathbf{m}}_{T^{\mathrm{fin}}} +  c \log{(2
   i^{2})} \right] \right\} \;,
\end{equ}
whence
\begin{equ}
\Delta_{i} \geqslant  \frac{\log{ \left( \frac{(i + 1)^{2}}{i^{2}} \right)}
   }{ 2 \overline{\mathbf{m}}_{T^{\mathrm{fin}}} +  c \log{(2
      i^{2})} }\gtrsim \f1{i\log i}  \;.
\end{equ}
Since this quantity isn't summable, $\sum_i \Delta_i = \infty$ and
we have found a contradiction to the assumption that $ T_{i} < T^{\mathrm{fin}} < \infty $ for all
$ i $.

\subsection{Energy estimate bounds}

The next sections are devoted to making rigorous the argument sketched above. We start by
obtaining the full energy
estimate, taking into consideration the rest terms which we
have ignored so far.

\begin{proposition}\label{prop:nrg-estimate}
   Fix $ t \mapsto \lambda_{t} $ as in
   Definition~\ref{def:stop-time-and-lambda} with $ \mf{a}= 3 $. There exists a
$ k \in \NN $, and a $ \kappa_{0} > 0 $ such that for some $ C > 0 $ and 
   all $ \kappa \in (0, \kappa_{0}  ) $ we can estimate uniformly over
   $ i \in \NN $ and $ t \in [T_{i} , T_{i +1}) $:
   \begin{equs}
      \partial_{t} \| w^{\mL} \|^{2} \leqslant  & - \| w^{\mL}
      \|_{H^{1}}^{2} +  2\langle w^{\mL},
      \mA_{t}^{\lambda_{t}} w^{\mL} \rangle  
+  2 \mf{r}_{\lambda} (t) \|
   w^{\mL} \|^{2} +    C  \lambda_{t}^{\frac{1}{3}}
(\mathbf{N}_{t}^{\kappa})^{k} \cdot \| w^{\mL} \|_{H^{1- \frac{3}{2}
\kappa}}\\
& +  C (\mathbf{N}_{t}^{\kappa})^{k} (\| w^{\mL} \|_{H^{1 -
\frac{3}{2} \kappa}}+ \| w^{\mL} \|_{H^{1- \frac{3}{2}
\kappa}}^{2})   \;,
   \end{equs}
   with $ \mf{r}_{\lambda} (t) $ defined by \eqref{eqn:ren-const}.
\end{proposition}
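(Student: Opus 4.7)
The starting point is the decomposition \eqref{e:fourTerms} of $\partial_{t}\|w^{\mL}\|^{2}$ into the four summands \eqref{eqn:bd-operator}--\eqref{eqn:bd-4}. I will bound each of them separately, matching them against the five contributions on the right-hand side of the proposition.

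The operator term \eqref{eqn:bd-operator} is already handled by identity \eqref{e:lap-split}: after doubling, it produces exactly $-\|w^{\mL}\|_{H^{1}}^{2} + 2\langle w^{\mL}, \mA_{t}^{\lambda_{t}}w^{\mL}\rangle + 2\mf{r}_{\lambda_{t}}(t)\|w^{\mL}\|^{2}$, which accounts for the first three summands in the claimed estimate. All remaining terms must therefore be controlled by the two error contributions. For the ``cubic'' remainder \eqref{eqn:bd-4}, the key observation is that $w^{\mL}$ is divergence-free (as the difference of two Leray-projected objects), so expanding $w^{\otimes 2} = (w^{\mL})^{\otimes 2} + 2 w^{\mL}\sotimes w^{\mH} + (w^{\mH})^{\otimes 2}$ makes the pure $(w^{\mL})^{\otimes 2}$ piece vanish by the standard Navier--Stokes cancellation. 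The mixed and pure-$w^{\mH}$ pieces are then bounded via Lemma~\ref{lem:high-feq-ref} with $\delta = 1/\mf{a} = 1/3$, which (for $\mf{a}=3$) gives $\|w^{\mH}\|_{H^{1-2\kappa-1/3}}\lesssim (\mathbf{N}_{t}^{\kappa})^{k}$ uniformly, and after integration by parts and duality produce contributions of the form $(\mathbf{N}_{t}^{\kappa})^{k}\|w^{\mL}\|_{H^{1-\frac{3}{2}\kappa}}^{2}$ and $(\mathbf{N}_{t}^{\kappa})^{k}\|w^{\mL}\|_{H^{1-\frac{3}{2}\kappa}}$. The $Y\sotimes w$, $C^{\para}(w,Q^{\mH})$ and $Y^{\otimes 2}$ pieces are treated by standard bilinear paraproduct estimates and the commutator estimate \cite[Lemma 2.8]{GubinelliPerkowski2017KPZ}, each enjoying at least $\mC^{1-2\kappa}$ regularity and hence falling into the same two error categories.

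The two paracontrolled remainders \eqref{eqn:bd-interpolation} and \eqref{eqn:bd-3} are where the $\lambda_{t}^{1/3}$ factor originates. In both cases one uses the decomposition $fg = f\para g + f\reso g + f\rpara g$ to write the symmetrised product $\mH_{\lambda_{t}}X \sotimes \cdot$ minus the right paraproduct $\mH_{\lambda_{t}}X \rpara \cdot$ as a sum of left paraproducts and resonant products, for which Lemma~\ref{lem:parap} gives clean bounds. Integrating the divergence by parts onto $w^{\mL}$ produces $\|w^{\mL}\|_{H^{1-\frac{3}{2}\kappa}}$ times a negative-regularity norm of the remainder, which is controlled by $\|\mH_{\lambda_{t}}X\|_{\mC^{-\kappa-\delta}}\lesssim \lambda_{t}^{-\delta}\mathbf{N}_{t}^{\kappa}$ (via Lemma~\ref{lem:high-freq-reg-loss}) combined with an $L^{2}$-type norm of $w^{\mL}$. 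Since $t \in [T_{i},T_{i+1})$ forces $\|w^{\mL}\|_{L^{2}} \lesssim 1 + \|w_{t}\| \lesssim (1+i)\simeq \lambda_{t}^{1/\mf{a}}=\lambda_{t}^{1/3}$, this yields the expected $\lambda_{t}^{1/3}(\mathbf{N}_{t}^{\kappa})^{k}\|w^{\mL}\|_{H^{1-\frac{3}{2}\kappa}}$ contribution. Term \eqref{eqn:bd-3} is similar, except the second factor is now $w^{\mH}$, whose size is of order $(\mathbf{N}_{t}^{\kappa})^{k}$ by Lemma~\ref{lem:high-feq-ref}, so no extra $\lambda_{t}$ power appears and the contribution sits in the second error category.

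The main obstacle is bookkeeping in the paraproduct/commutator step: each term has several natural estimates (trading between $\lambda_{t}^{-\delta}$ gained from the high-frequency cutoff, and the regularity budget of $w^{\mL}$ and $X$), and one has to check that a single consistent choice of $\delta$ fits with $\mf{a}=3$, gives the fractional Sobolev norm $H^{1-\frac{3}{2}\kappa}$ on $w^{\mL}$, and produces no worse than $\lambda_{t}^{1/3}$. The choice $\mf{a}=3$ (as flagged in Remark~\ref{rem:a}) is dictated precisely by this balance: a larger $\mf{a}$ would make the $w^{\mH}$-dependent terms well-behaved but spoil the $\lambda_{t}^{1/\mf{a}}$ estimate on $\|w^{\mL}\|$, while $\mf{a}\leq 2$ makes the $\lambda_{t}$-exponent in the worst paraproduct term too large to close the subsequent double-exponential argument.
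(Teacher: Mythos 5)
Your overall skeleton matches the paper's (the decomposition \eqref{e:fourTerms}, the identity \eqref{e:lap-split} for \eqref{eqn:bd-operator}, the divergence-free cancellation of the $(w^{\mL})^{\otimes 2}$ part, and Lemma~\ref{lem:high-feq-ref} to keep $w^{\mH}$ of size $\mathbf{N}_t^\kappa$), but there are two genuine gaps. The first and most serious is your treatment of the commutator: $C^{\para}(w,Q^{\mH})$ is \emph{not} a benign term of regularity $\mC^{1-2\kappa}$ with controlled norm. By \eqref{e:com} it contains $((\partial_t-\Delta)w)\para Q^{\mH}$, and substituting \eqref{eqn:w} this includes $[\mathbf{P}\div(w^{\otimes 2})]\para Q^{\mH}$, which is quadratic in $w$; paired against $w^{\mL}$ it is cubic in $w$ and does not vanish by any divergence-free cancellation, so it cannot simply be filed into error terms that are at most quadratic in $\|w^{\mL}\|_{H^{1-\frac32\kappa}}$. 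The paper's Lemma~\ref{lem:bd-4} spends most of its effort exactly here: it trades the smallness $\|Q^{\mH}\|_{\mC^{2-\kappa-\gamma}}\lesssim\lambda_t^{-\gamma}\mathbf{N}_t^\kappa\simeq(1+\|w\|)^{-\gamma\mf{a}}\mathbf{N}_t^\kappa$ against interpolation exponents (the choice $\gamma=1/2$, $\eta=3/4+2\kappa$, with the constraints $2p+q\leqslant 2$ and $3-2p-q\leqslant\gamma\mf{a}$), and this is both where the condition $\mf{a}>2$ bites and where the exponent $1-\frac32\kappa$ in the statement actually originates. Your proposal omits this mechanism entirely, so as written the cubic commutator contribution is unaccounted for.

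The second gap is the bookkeeping for \eqref{eqn:bd-interpolation} and \eqref{eqn:bd-3}, where your attribution of the $\lambda_t^{1/3}$ factor does not survive scrutiny. For \eqref{eqn:bd-interpolation} you propose to pay $\lambda_t^{-\delta}$ on $\mH_{\lambda_t}X$ and bound the remaining factor of $w^{\mL}$ in $L^2$ (using $\|w^{\mL}\|\lesssim\lambda_t^{1/3}$); but the remainder contains the resonant product $(\mH_{\lambda_t}X)\reso w^{\mL}$, which by Lemma~\ref{lem:parap} requires strictly positive total regularity and hence cannot be estimated with only an $L^2$ norm of $w^{\mL}$ (let alone with $\mH_{\lambda_t}X$ measured in $\mC^{-\kappa-\delta}$). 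The correct bound here is the paper's Lemma~\ref{lem:interp}: quadratic in $\|w^{\mL}\|_{H^{\eta}}$ with no $\lambda_t$ at all. Conversely, you assert that \eqref{eqn:bd-3} produces no power of $\lambda_t$, but \eqref{eqn:bd-3} involves the \emph{full} $X$, not $\mH_{\lambda_t}X$, so after removing the paraproduct/resonant high-frequency part one is left with the genuine product $(\mL_{\lambda_t}X)\sotimes w^{\mH}$; this is precisely where the paper invokes Lemma~\ref{lem:low-freq} to upgrade $\mL_{\lambda_t}X$ to $\mC^{1/3-\kappa}$ at the price of $\lambda_t^{1/3}$, which is the true source of the $\lambda_t^{1/3}\,(\mathbf{N}_t^{\kappa})^{k}\|w^{\mL}\|_{H^{1-\frac32\kappa}}$ term in the statement (cf.\ Lemma~\ref{lem:bd-rest} and Remark~\ref{rem:factor-3}). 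Since the final inequality only holds because these two sources of loss are handled as in Lemmas~\ref{lem:interp}, \ref{lem:bd-rest} and \ref{lem:bd-4}, your proposal as it stands does not close.
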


\begin{remark}\label{rem:on-a}
   The value $ \mf{a} = 3 $ is arbitrary: for example the calculations
   below allow for any $ \mf{a} \in (2, \infty) $. Our choice $
   \mf{a} = 3 $ guarantees that $ \lambda^{\frac{1}{3}}_{t} \lesssim 1 + \|
   w_{t} \| $, see also Remarks~\ref{rem:a} and~\ref{rem:factor-3}.
\end{remark}

\begin{proof}
   This estimate follows from the bound \eqref{e:lap-split} for the term in
   \eqref{eqn:bd-operator}, together with
   Lemmas~\ref{lem:interp},~\ref{lem:bd-rest}, and~\ref{lem:bd-4} below for 
   \eqref{eqn:bd-interpolation}--\eqref{eqn:bd-4}. The regularity $ 1 -
\frac{3}{2} \kappa $ is the worst one appearing in all estimates, and comes
from Lemma~\ref{lem:bd-4}.
\end{proof}
In the rest of this section we collect the bounds that lead to the energy estimate
in Proposition~\ref{prop:nrg-estimate}. We start with a bound on the term
\eqref{eqn:bd-interpolation}.

\begin{lemma}\label{lem:interp}
   Fix $ \lambda_{t} $ as in Definition~\ref{def:stop-time-and-lambda} for any $
   \mf{a} \in [2, \infty) $. There exists a $ \kappa_{0}> 0 $ and an
   $ \eta  \in (0, 1 - \kappa_{0}) $ such that for
   all $ \kappa < \kappa_{0} $ we have the bound
   \begin{equ}
      \langle w^{\mL},  \div ( 2
   (\mH_{\lambda_{t}} X) \sotimes w^{\mL} - 2
   (\mH_{\lambda_{t}} X ) \rpara w^{\mL} ) \rangle \lesssim
   \mathbf{N}_{t}^{\kappa} \| w^{\mL} \|_{H^{\eta}}^{2} 
   \end{equ}
\end{lemma}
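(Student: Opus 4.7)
The plan is to exploit the cancellation built into $2 f \sotimes g - 2 f \rpara g$: in the Bony trichotomy $f_{i} g_{j} = f_{i} \para g_{j} + f_{i} \rpara g_{j} + f_{i} \reso g_{j}$, the $\rpara$ contributions from $f \otimes g + g \otimes f$ are exactly cancelled, leaving behind only paraproduct and resonant pieces. These obey standard Bony estimates, and neither paracontrolled machinery nor the gain $\lambda_{t}^{-\delta}$ afforded by the high-frequency projection $\mH_{\lambda_{t}}$ is needed here; the presence of $\mH_{\lambda_{t}}$ simply reflects the splitting adopted in~\eqref{e:fourTerms}.

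Setting $f = \mH_{\lambda_{t}} X$ and $M = 2 f \sotimes w^{\mL} - 2 f \rpara w^{\mL}$, I would first use $\|f\|_{\mC^{-\kappa}} \leqslant \|X\|_{\mC^{-\kappa}} \leqslant \mathbf{N}_{t}^{\kappa}$ together with the standard Besov bounds
\begin{equ}
  \|f \para g\|_{H^{\alpha - \kappa}} + \|f \reso g\|_{H^{\alpha - \kappa}} \lesssim \|f\|_{\mC^{-\kappa}} \|g\|_{H^{\alpha}}\;,
\end{equ}
valid for any $\alpha > \kappa$ (the strict inequality being required only by the resonant term), to conclude $\|M\|_{H^{\alpha - \kappa}} \lesssim \mathbf{N}_{t}^{\kappa} \|w^{\mL}\|_{H^{\alpha}}$.

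Next, integration by parts together with $L^{2}$ duality between $H^{\alpha - \kappa}$ and $H^{\kappa - \alpha}$ gives
\begin{equ}
  |\langle w^{\mL}, \div M\rangle| = |\langle \nabla w^{\mL}, M\rangle| \lesssim \|w^{\mL}\|_{H^{1 + \kappa - \alpha}} \|M\|_{H^{\alpha - \kappa}} \lesssim \mathbf{N}_{t}^{\kappa} \|w^{\mL}\|_{H^{1 + \kappa - \alpha}} \|w^{\mL}\|_{H^{\alpha}}\;.
\end{equ}
Equalising the two Sobolev exponents via $\alpha = \eta \eqdef \tf{1}{2}(1 + \kappa)$ yields the claimed bound, and one checks that $\eta \in (\kappa, 1 - \kappa_{0})$ whenever $\kappa_{0} < \tf{1}{3}$ and $\kappa$ is small enough relative to $\kappa_{0}$.

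The only point requiring genuine care is a bookkeeping verification that, under the vector/matrix conventions implicit in the paracontrolled ansatz $w - w^{\sharp} = \mathbf{P} \div( w \para Q)$, the components of $2 f \sotimes g - 2 f \rpara g$ are indeed free of $\rpara$ contributions after symmetrisation in $(i,j)$; beyond this notational check the argument is routine.
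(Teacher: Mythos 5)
Your proposal is correct and follows essentially the same route as the paper: the $\sotimes$ minus $\rpara$ combination leaves exactly the $\para$ and $\reso$ contributions, which are bounded in $H^{\eta-\kappa}$ by $\mathbf{N}_t^{\kappa}\|w^{\mL}\|_{H^{\eta}}$ via the standard Bony estimates, and the divergence is absorbed by duality under the same threshold $\eta \geqslant \tfrac12(1+\kappa)$ that the paper imposes. The ``bookkeeping'' you flag is already settled by the paper's conventions, since its paraproducts are defined directly through $\sotimes$, so the decomposition $\varphi \sotimes \psi = \varphi \para \psi + \varphi \reso \psi + \varphi \rpara \psi$ holds componentwise by definition.
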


\begin{proof}
   We can bound for any $ \eta \in (0, 1)  $
   \begin{equ}
      \|(\mH_{\lambda_{t}} X) \reso 
      w^{\mL} +  (\mH_{\lambda_{t}} X ) \para w^{\mL}
      \|_{H^{\eta - \kappa }} \lesssim \mathbf{N}_{t}^{\kappa} \| w^{\mL}
      \|_{H^{\eta}}\;,
   \end{equ}
   so that overall
   \begin{equ}
      \langle w^{\mL},  \div ( 2 (\mH_{\lambda_{t}} X) \sotimes
      w^{\mL} - 2 (\mH_{\lambda_{t}} X ) \rpara w^{\mL} ) \rangle
      \lesssim \mathbf{N}_{t}^{\kappa} \| w^{\mL} \|_{H^{\eta}}^{2}  \;,
   \end{equ}
provided that 
$\eta \geqslant \frac{1}{2} (1+ \kappa)$,
   which is the desired bound.
\end{proof} 
Next we pass to an estimate of \eqref{eqn:bd-3} and the first term of \eqref{eqn:bd-4}.

\begin{lemma}\label{lem:bd-rest}
   Fix $ \lambda_{t} $ as in Definition~\ref{def:stop-time-and-lambda} for any $
   \mf{a} \in (2, \infty) $. There exists a $ \kappa_{0}(\mf{a}) > 0 $ and an
   $ \eta (\mf{a}) \in (0, 1 - \kappa_{0}) $ such that for
   all $ \kappa < \kappa_{0} (\mf{a}) $ we have the bounds
   \begin{equs}
      \langle w^{\mL},    \div ( 2 X \sotimes w^{\mH} - 2
   (\mH_{\lambda_{t}} X ) \rpara w^{\mH} ) \rangle & \lesssim \| w^{\mL}
   \|_{H^{\eta}} \lambda_{t}^{\frac{1}{3}} ( \mathbf{N}_{t}^{\kappa}
   )^{2}   \;, \\ 
   \langle w^{\mL}, \div
   (w^{\otimes 2} )\rangle & \lesssim \| w^{\mL} \|_{H^{\eta}} (\|
   w^{\mL} \|_{H^{\eta}} + \mathbf{N}_{t}^{\kappa} ) (\mathbf{N}_{t}^{\kappa})^{2} \;.
   \end{equs}
\end{lemma}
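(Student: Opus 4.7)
The plan for both estimates is to use duality, $|\langle w^{\mL}, \div F\rangle| \lesssim \|w^{\mL}\|_{H^{\eta}} \|F\|_{H^{1-\eta}}$ for $\eta$ slightly below $1$ (heuristically $\eta = 1 - \tfrac{3}{2}\kappa$ as in Proposition~\ref{prop:nrg-estimate}), then bound $\|F\|_{H^{1-\eta}}$ via Bony's decomposition and the regularity of $w^{\mH}$ from Lemma~\ref{lem:high-feq-ref}. Throughout we freely use $\mathbf{N}_t^{\kappa} \geqslant 1$ to absorb constants.

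\emph{First estimate.} I would apply Bony's decomposition componentwise to $2 X \sotimes w^{\mH} = X \otimes w^{\mH} + w^{\mH} \otimes X$, producing para, rpara, and reso contributions for each tensor ordering. The subtraction $2(\mH_{\lambda_t} X) \rpara w^{\mH}$ is designed precisely to remove the structurally problematic rpara piece from $X \otimes w^{\mH}$, up to a low-frequency remainder $2(\mL_{\lambda_t} X) \rpara w^{\mH}$ that is controllable via Lemma~\ref{lem:low-freq} (which trades the missing high-frequency support for a polynomial cost in $\lambda_t$ compensated by a regularity gain). The remaining terms $X \para w^{\mH}$, $X \reso w^{\mH}$, and the full Bony decomposition of $w^{\mH} \otimes X$ are standard para/resonant products whose bounds only require $X \in \mC^{-\kappa}$ and $w^{\mH}$ with strictly positive regularity, ensuring that every resonant product has positive sum of regularities. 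I would then apply Lemma~\ref{lem:high-feq-ref} with a small $\delta > 0$, yielding $\|w^{\mH}\|_{H^{1-2\kappa-\delta}} \lesssim (1 + \|w\|)^{1 - \mf{a}\delta} \mathbf{N}_t^{\kappa}$; on $[T_i, T_{i+1})$ one has $1 + \|w\| \leqslant 2(1 + \|w_{T_i}\|) = 2 \lambda_t^{1/\mf{a}}$, so $(1+\|w\|)^{1-\mf{a}\delta} \lesssim \lambda_t^{1/\mf{a}}$. Combined with $\|X\|_{\mC^{-\kappa}} \lesssim \mathbf{N}_t^{\kappa}$, this gives $\|F\|_{H^{1-\eta}} \lesssim \lambda_t^{1/\mf{a}} (\mathbf{N}_t^{\kappa})^2$, which is the claimed bound for $\mf{a} = 3$.

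\emph{Second estimate.} Here I would split $w = w^{\mL} + w^{\mH}$ and expand
\[
w^{\otimes 2} = (w^{\mL})^{\otimes 2} + 2\, w^{\mL} \sotimes w^{\mH} + (w^{\mH})^{\otimes 2}\;.
\]
The key observation is that $w^{\mH} = \mathbf{P}\div(w \para Q^{\mH})$ is divergence-free (lying in the image of $\mathbf{P}$), and so is $w^{\mL} = w - w^{\mH}$. A standard integration by parts then annihilates the most dangerous cubic term, $\langle w^{\mL}, \div((w^{\mL})^{\otimes 2}) \rangle = 0$. The two remaining pairings are linear or quadratic in $w^{\mH}$ and are handled by duality together with the 2D Sobolev product estimate $\|fg\|_{H^{s}} \lesssim \|f\|_{H^{s_1}}\|g\|_{H^{s_2}}$ available for $s_1, s_2 \geqslant 0$, $s_1 + s_2 > 1$, $s \leqslant \min(s_1, s_2)$. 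For this estimate I would apply Lemma~\ref{lem:high-feq-ref} with $\delta = 1/\mf{a}$ (allowed since $\mf{a} > 2$, so in particular $\delta < 1 - 2\kappa$), making the prefactor $(1+\|w\|)^{1-\mf{a}\delta}$ a constant and producing $\|w^{\mH}\|_{H^{1-2\kappa-1/\mf{a}}} \lesssim \mathbf{N}_t^{\kappa}$ with no $\lambda_t$ cost. Summing the cross and quadratic contributions gives a bound of shape $\|w^{\mL}\|_{H^{\eta}}^{2} \mathbf{N}_t^{\kappa} + \|w^{\mL}\|_{H^{\eta}} (\mathbf{N}_t^{\kappa})^{2}$, which is absorbed into the stated estimate using $\mathbf{N}_t^{\kappa} \geqslant 1$.

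The main obstacle is the consistent choice of $\eta$, $\delta$, and $\kappa_0$ so that all Bony and Sobolev product estimates close simultaneously: small $\delta$ is needed in the first estimate (to extract the factor $\lambda_t^{1/\mf{a}}$), while $\delta \approx 1/\mf{a}$ is needed in the second (to avoid any $\lambda_t$ factor). The compatibility of these two regimes with the requirement that $w^{\mH}$ still has strictly positive regularity is precisely what forces $\mf{a} > 2$, in agreement with Remark~\ref{rem:a}.
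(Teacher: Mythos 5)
Your second estimate follows the paper's own route (the divergence-free cancellation of $\langle w^{\mL},\div((w^{\mL})^{\otimes 2})\rangle$, then Lemma~\ref{lem:high-feq-ref} with $\delta = 1/\mf{a}$ for the terms involving $w^{\mH}$, which is exactly where $\mf{a}>2$ enters) and is essentially fine, modulo the fact that your stated Sobolev product rule requires $s\le s_1+s_2-1$ rather than merely $s_1+s_2>1$; with $\eta$ close to $1$ this is harmless. The first estimate, however, has two genuine problems. First, your claim that ``the full Bony decomposition of $w^{\mH}\otimes X$'' is standard and needs only $X\in\mC^{-\kappa}$ plus positive regularity of $w^{\mH}$ is false: the piece of that ordering with $X$ in high frequency (i.e.\ $w^{\mH}\para X$, equivalently $X\rpara w^{\mH}$ after symmetrisation) has regularity $-\kappa<0$, so after applying $\div$ it cannot be paired against $w^{\mL}\in H^{\eta}$ with $\eta<1$. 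It is rescued only because the counterterm $2(\mH_{\lambda_t}X)\rpara w^{\mH}$ is built from the symmetrised product $\sotimes$ and therefore removes the high-frequency-$X$ contribution of \emph{both} orderings; as written, your accounting leaves this term unbounded. After the subtraction the surviving terms are $X\para w^{\mH}+X\reso w^{\mH}+(\mL_{\lambda_t}X)\rpara w^{\mH}$, which is what must be estimated.

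Second, the $\lambda_t$ bookkeeping: you attribute the factor $\lambda_t^{1/3}$ to Lemma~\ref{lem:high-feq-ref} with small $\delta$, via $(1+\|w\|)^{1-\mf{a}\delta}\lesssim\lambda_t^{1/\mf{a}}$. This reproduces the stated power only when $\mf{a}\ge 3$ (the lemma is asserted for all $\mf{a}\in(2,\infty)$), and, more importantly, it ignores that the remainder $(\mL_{\lambda_t}X)\rpara w^{\mH}$ needs its own power of $\lambda_t$: by Lemma~\ref{lem:low-freq} one pays $\lambda_t^{(1-\eta)+\kappa}$ to lift $\mL_{\lambda_t}X$ from $\mC^{-\kappa}$ to the positive regularity $1-\eta$ needed for the duality pairing, and combined with your $\lambda_t^{1/\mf{a}}$ from the small-$\delta$ bound on $w^{\mH}$ the total exceeds the admissible power (recall that downstream one needs $\lambda_t^{1/3}\lesssim 1+\|w_t\|$, i.e.\ a power at most $1/\mf{a}$). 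The paper closes the argument with the opposite allocation: take $\delta=1/2$ in Lemma~\ref{lem:high-feq-ref}, so $\|w^{\mH}\|_{H^{1/2-2\kappa}}\lesssim\mathbf{N}_t^{\kappa}$ carries no $\lambda_t$ at all, bound $(\mH_{\lambda_t}X)\para w^{\mH}+(\mH_{\lambda_t}X)\reso w^{\mH}$ in $H^{1/2-3\kappa}$ by $(\mathbf{N}_t^{\kappa})^2$, and bound the \emph{entire} low-frequency product $(\mL_{\lambda_t}X)\sotimes w^{\mH}$ in $H^{1/3-\kappa}$ by $\lambda_t^{1/3}(\mathbf{N}_t^{\kappa})^2$ using $\|\mL_{\lambda_t}X\|_{\mC^{1/3-\kappa}}\lesssim\lambda_t^{1/3}\|X\|_{\mC^{-\kappa}}$, so that the whole $\lambda_t^{1/3}$ comes from the low-frequency piece and only forces $\eta>2/3+\kappa$. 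Your plan could be repaired by a similar rebalancing of $\delta$ and of where the $\lambda_t$ power is spent, but as written it does not close.
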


\begin{remark}\label{rem:factor-3}
   The factor $ \lambda^{\frac{1}{3}}_{t} $ above could be replaced by 
   $ \lambda^{q}_{t} $ for an arbitrary $ q < 1/2$. Eventually we will
   need $ \lambda_{t}^{q} \lesssim 1 + \| w_{t}\|$, which is the case
   if $ \mf{a} \leqslant q^{-1}  $. Since any $ 2 < \mf{ a} \leqslant 3 $ will be
   sufficient for our needs we have fixed $ q =1/3 $.
\end{remark}

\begin{proof}
   \textit{First bound.} By Lemma~\ref{lem:high-feq-ref} with $ \delta =
   1/2 $ and the assumption $ \mf{a} \geqslant 2 $ one has
   \begin{equ}
      \|  (\mH_{\lambda_{t}}  X ) \reso w^{\mH} + 
     (\mH_{\lambda_{t}} X ) \para w^{\mH}  \|_{H^{1/2 - 3 \kappa}}
      \lesssim \mathbf{N}_{t}^{\kappa} \| w^{\mH}  \|_{H^{1/2 - 2 \kappa}}  \lesssim
     (\mathbf{N}_{t}^{\kappa})^{2} \;.
  \end{equ}
  Applying in addition Lemma~\ref{lem:low-freq} we obtain
  \begin{equ}
     \| (\mL_{\lambda_{t}} X ) \sotimes w^{\mH}  \|_{H^{1/3 -  \kappa}} 
     \lesssim \| \mL_{\lambda_{t}} X \|_{\mC^{1/3 - \kappa}} \|
     w^{\mH} \|_{H^{1/2 - 2 \kappa}} \lesssim
     \lambda_{t}^{\frac{1}{3}} ( \mathbf{N}_{t}^{\kappa} )^{2}\;,
   \end{equ}
   assuming that $ \kappa_{0} > 0 $ is sufficiently small so that $ 5/6 - 3
\kappa > 0 $.
   From the latter two estimates we can deduce as desired
   \begin{equ}
      \langle w^{\mL},  \div ( 2 X \sotimes w^{\mH} - 2
      (\mH_{\lambda_{t}} X ) \rpara w^{\mH} ) \rangle \lesssim \| w^{\mL}
      \|_{H^{\eta}}  \lambda_{t}^{\frac{1}{3}} (
      \mathbf{N}_{t}^{\kappa})^{2} \;,
   \end{equ}
   for any $ \eta \in ( 2/3+ \kappa, 1)  $, assuming that $ \kappa_{0} $ is small
so that $ 1/2 - 3 \kappa \geqslant 1/3 - \kappa $.

   \textit{Second bound.} Here, since $ w^{\otimes 2} =
   ( w^{\mL})^{\otimes 2} + ( w^{\mH})^{\otimes 2} + 2
   w^{\mL} \sotimes w^{\mH}$ and $
   w^{\mL} $ is divergence free, we find
   \begin{equ}
      \langle w^{\mL}, \div ( w^{\otimes 2}) \rangle  = \langle
      w^{\mL} , \div ( (w^{\mH} )^{\otimes 2} + 2
      w^{\mL} \sotimes w^{\mH})\rangle  \;.
   \end{equ}
   For the term involving $ (w^{\mH} )^{ \otimes 2} $ we
   estimate by Lemma~\ref{lem:parap} and Sobolev embeddings in dimension $ d =2 $
   \begin{equ}
      \| (w^{\mH})^{\otimes 2} \|_{H^{1/4 - 3 \kappa}} \lesssim \|
      w^{\mH} \|_{\mC_{4}^{1/4 - 2 \kappa}}^{2} \lesssim \|
      w^{\mH} \|^{2}_{\mC_{2}^{3/4- 2 \kappa}} \lesssim \|
      w^{\mH} \|^{2}_{H^{3/4 - 2 \kappa}} \;.
   \end{equ}
   To close this first bound we apply Lemma~\ref{lem:high-feq-ref} with $ \delta =
   1/4 $ in order to deduce
   \begin{equ}[e:boundwH]
      \| (w^{\mH})^{\otimes 2} \|_{H^{1/4 - 3 \kappa}} \lesssim \|
      w \| ( \mathbf{N}_{t}^{\kappa})^{2} \;,
   \end{equ}
   since $ \mf{a} \geqslant 2 $. Therefore we have obtained
   \begin{equ}
      \langle w^{\mL}, \div ( (w^{\mH})^{\otimes 2} ) \rangle
      \lesssim \| w^{\mL} \|_{H^{3/4 + 3 \kappa}} \| w \|
      (\mathbf{N}_{t}^{\kappa})^{2} \lesssim \| w^{\mL} \|_{H^{3/4 + 3 \kappa}} (\|
      w^{\mL} \| + \mathbf{N}_{t}^{\kappa} ) ( \mathbf{N}_{t}^{\kappa})^{2}\;,
   \end{equ}
   where in the last step we used once again Lemma~\ref{lem:high-feq-ref} with
   $ \delta = 1/2 $.

   For the term involving $ w^{\mL} \sotimes w^{\mH} $ we
   proceed similarly, only this time we make use of the fact that $
   \mf{a} > 2 $ (with a strict inequality!). We use
   Lemma~\ref{lem:high-feq-ref} with $ \delta = 1/ \mf{a} $ and Sobolev
   embeddings to bound
   \begin{equ}
      \| w^{\mH} \|_{\mC_{4}^{1/2 - 1/ \mf{a} - 2 \kappa}} \lesssim \|
      w^{\mH} \|_{H^{1 - 1/ \mf{a}- 2 \kappa} } \lesssim
      \mathbf{N}_{t}^{\kappa} \;, \qquad \| w^{\mL} \|_{\mC_{4}^{1/2 - 1/
      \mf{a}- \kappa}}
      \lesssim \| w^{\mL} \|_{H^{1 -1/ \mf{a}- \kappa}} \;.
   \end{equ}
   Now, if $ 1/2 - 1/ \mf{a} - \kappa > 0 $ -- which is the case since $
   \mf{a} > 2 $, assuming $ \kappa_{0}(\mf{a}) $ is sufficiently small -- we conclude that
   \begin{equ}
      \| \div( w^{\mL} \sotimes w^{\mH}) \|_{H^{- 1/2 - 1/
      \mf{a} - 2 \kappa}} \lesssim \| w^{\mL} \|_{H^{1 - 1/
      \mf{a} - \kappa}} \mathbf{N}_{t}^{\kappa}  \;.
   \end{equ}
   Therefore
   \begin{equ}
      \langle w^{\mL}, \div ( w^{\mL} \sotimes w^{\mH})
      \rangle \lesssim \| w^{\mL} \|_{H^{1/2 + 1/ \mf{a} + 2 \kappa}} \|
      w^{\mL} \|_{H^{1 - 1/ \mf{a}- \kappa}} \mathbf{N}_{t}^{\kappa} \lesssim \|
      w^{\mL} \|_{H^{\eta}}^{2} \mathbf{N}_{t}^{\kappa} \;,
   \end{equ}
   provided $ \kappa >0  $ is sufficiently small with respect to $
   \mf{a} $ and $ \eta \geqslant (1/2 + 1/ \mf{a} + 2 \kappa) \vee (1 - 1/
   \mf{a} - \kappa ) $. The proof is complete.
\end{proof} 
We are left with the last terms in \eqref{eqn:bd-4}.

\begin{lemma}\label{lem:bd-4}
Fix $ \lambda_{t} $ as in Definition~\ref{def:stop-time-and-lambda} for any $
   \mf{a} \in (2, \infty) $.
   There exists a $ \kappa_{0}(\mf{a}) > 0 $ and an $ \eta (\mf{a}) \in (0, 1 -
   \kappa_{0}) $ such that for any $ \kappa \in (0, \kappa_{0} (\mf{a})) $ 
   \begin{equs}
      \langle w^{\mL},  &\div
   (2 Y \sotimes w   +
   C^{\para }(w, Q^{\mH}) +  Y^{\otimes 2})\rangle \\
   & \lesssim  \|
   w^{\mL} \|_{H^{1 -  \frac{3}{2} \kappa}} (\| w^{\mL} \|_{H^{2 \kappa}} +
   \mathbf{N}_{t}^{\kappa} ) \mathbf{N}_{t}^{\kappa} + (\|
         w^{\mL} \|^{2}_{H^{\eta}} + \| w^{\mL}
      \|_{H^{\eta}} \mathbf{N}_{t}^{\kappa}  ) (\mathbf{N}_{t}^{\kappa} )^{2} \;.
   \end{equs}
\end{lemma}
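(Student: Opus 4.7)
The plan is to bound the three contributions $\div(Y^{\otimes 2})$, $\div(2 Y \sotimes w)$ and $\div(C^{\para}(w, Q^{\mH}))$ separately in an appropriate negative Sobolev norm and to pair each with $w^{\mL}$ by duality, transferring the divergence onto $w^{\mL}$. The claim has two groups of terms: the first group, with the factor $\|w^{\mL}\|_{H^{1-\frac{3}{2}\kappa}}$, should come from the $Y \sotimes w$ contribution, while the second group, involving $\|w^{\mL}\|_{H^{\eta}}$ together with at least $(\mathbf{N}_{t}^{\kappa})^{2}$, should collect the $Y^{\otimes 2}$ term and the commutator. The choice of $\eta<1-\kappa_0$ is dictated at the end by the worst regularity appearing in the individual bounds.

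For $\div(Y^{\otimes 2})$, the fact that $Y \in \mC^{2\kappa}$ together with Lemma~\ref{lem:parap} gives $\|Y^{\otimes 2}\|_{\mC^{2\kappa}} \lesssim (\mathbf{N}_{t}^{\kappa})^{2}$, which after integration by parts yields the term $\|w^{\mL}\|_{H^{\eta}} (\mathbf{N}_{t}^{\kappa})^{2}$. For $\div(2 Y \sotimes w)$ I would split $w = w^{\mL} + w^{\mH}$. The low-frequency piece yields, by the paraproduct estimates of Lemma~\ref{lem:parap}, a bound $\|Y \sotimes w^{\mL}\|_{H^{2\kappa}} \lesssim \mathbf{N}_{t}^{\kappa}\|w^{\mL}\|_{H^{2\kappa}}$, which after pairing with $\div w^{\mL}$ produces exactly $\|w^{\mL}\|_{H^{1-\frac{3}{2}\kappa}}\|w^{\mL}\|_{H^{2\kappa}}\mathbf{N}_{t}^{\kappa}$. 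For the high-frequency piece I would invoke Lemma~\ref{lem:high-feq-ref} with $\delta = 1/\mf{a}$: since $\mf{a}>2$, the factor $(1+\|w_{t}\|)^{1-\mf{a}\delta}$ is bounded by a constant, so $\|w^{\mH}\|_{H^{1-2\kappa-\delta}} \lesssim \mathbf{N}_{t}^{\kappa}$, and duality produces $\|w^{\mL}\|_{H^{1-\frac{3}{2}\kappa}}(\mathbf{N}_{t}^{\kappa})^{2}$.

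The main obstacle is the commutator term. Using the identity
\begin{equ}
  C^{\para}(w, Q^{\mH}) = \bigl((\partial_{t}-\Delta)w\bigr) \para Q^{\mH} + \mathrm{Tr}\bigl[(\nabla w) \para (\nabla Q^{\mH})\bigr]\;,
\end{equ}
I would substitute $(\partial_{t}-\Delta)w$ from \eqref{eqn:w}, so that the first summand becomes $\mathbf{P} \div(w^{\otimes 2} + D \sotimes w + Y^{\otimes 2}) \para Q^{\mH}$. The key gain is that $Q^{\mH}_{t} \in \mC^{2-\kappa-\delta}$ with a $\lambda_{t}^{-\delta}$ gain from Lemma~\ref{lem:high-freq-reg-loss}, so paraproduct estimates applied to each of the three pieces of the right-hand side of \eqref{eqn:w} place $((\partial_{t}-\Delta)w)\para Q^{\mH}$ in a Hölder--Besov space of positive regularity (of the order $1-2\kappa-\delta$), at the price of a $\lambda_{t}^{-\delta}$ factor. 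The cubic subterm $\mathbf{P}\div(w^{\otimes 2})\para Q^{\mH}$ is the most delicate: here I would split $w = w^{\mL}+w^{\mH}$ once more and invoke \eqref{e:boundwH} together with Lemma~\ref{lem:high-feq-ref} to control $w^{\mH}$ by $\mathbf{N}_{t}^{\kappa}$, so that the worst remaining contribution is quadratic in $\|w^{\mL}\|_{H^{\eta}}$. The trace term is treated analogously, with $\nabla Q^{\mH} \in \mC^{1-\kappa-\delta}$ (again with a $\lambda_{t}^{-\delta}$ gain) and $\nabla w$ of regularity $-\kappa$.

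Finally, one collects all terms, uses that thanks to the choice $\mf{a}>2$ every surviving factor $\lambda_{t}^{\delta}$ with $\delta = 1/\mf{a}$ is dominated by $1+\|w_{t}\|$ (which in turn is bounded by $\|w^{\mL}\|+\mathbf{N}_{t}^{\kappa}$ via Lemma~\ref{lem:high-feq-ref}), and picks $\eta \in (0,1-\kappa_{0})$ large enough to dominate all the intermediate Sobolev exponents produced. The most delicate step is the choice of $\delta$ in the paraproduct bounds of the commutator, where the $\lambda_{t}^{-\delta}$ gain must be precisely traded against the $(1+\|w\|)^{\mf{a}\delta}$ loss coming from the quadratic nonlinearity; this is why $\mf{a}>2$ (strictly) is required.
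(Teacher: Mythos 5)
Your route coincides with the paper's: the same three-way splitting and duality pairing, the same treatment of $Y^{\otimes 2}$ and $2Y\sotimes w$ (the paper bounds $\|Y\sotimes w\|_{H^{\frac74\kappa}}\lesssim \mathbf{N}_{t}^{\kappa}(\|w^{\mL}\|_{H^{2\kappa}}+\mathbf{N}_{t}^{\kappa})$ via Lemma~\ref{lem:high-feq-ref}, exactly as you propose), and for the commutator the same identity \eqref{e:com}, substitution of \eqref{eqn:w}, and use of the $\lambda_{t}^{-\gamma}$ gain in $\|Q^{\mH}\|_{\mC^{2-\kappa-\gamma}}$ from \eqref{e:QH-bd}.

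There is, however, a genuine gap at the heart of the commutator estimate. After splitting $w=w^{\mL}+w^{\mH}$ and controlling $w^{\mH}$ by $\mathbf{N}_{t}^{\kappa}$, the surviving contribution $\langle w^{\mL},\div([\mathbf{P}\div((w^{\mL})^{\otimes2})]\para Q^{\mH})\rangle$ is \emph{cubic} in $w^{\mL}$, of the form $\|w^{\mL}\|_{H^{\gamma+2\kappa}}\,\|w^{\mL}\|_{H^{1/2+\kappa}}^{2}\,\lambda_{t}^{-\gamma}\mathbf{N}_{t}^{\kappa}$, not quadratic as you assert at that point; and one cannot simply ``pick $\eta$ large enough to dominate the intermediate exponents'', since $\|w^{\mL}\|_{H^{\eta}}^{3}$ is not admissible on the right-hand side and the factor $\lambda_{t}^{-\gamma}\simeq(1+\|w_{t}\|)^{-\gamma\mf{a}}$ only compensates powers of the $L^{2}$ norm, never of $\|w^{\mL}\|_{H^{\eta}}$. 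The missing step, which is the substance of the paper's proof, is the interpolation $\|w^{\mL}\|_{H^{1/2+\kappa}}\lesssim\|w^{\mL}\|_{H^{\eta}}^{p}\|w^{\mL}\|^{1-p}$ and $\|w^{\mL}\|_{H^{\gamma+2\kappa}}\lesssim\|w^{\mL}\|_{H^{\eta}}^{q}\|w^{\mL}\|^{1-q}$ with $p=(1/2+\kappa)/\eta$, $q=(\gamma+2\kappa)/\eta$, subject to the two \emph{simultaneous} constraints $2p+q\leqslant 2$ (so at most two powers of $\|w^{\mL}\|_{H^{\eta}}$ remain) and $3-2p-q\leqslant\gamma\mf{a}$ (so the leftover $L^{2}$ powers are absorbed by $\lambda_{t}^{-\gamma}$, using $\|w^{\mL}\|\lesssim\|w\|+\mathbf{N}_{t}^{\kappa}$). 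One must then verify compatibility with $\eta<1$: the first constraint forces $\eta\geqslant(1+\gamma+4\kappa)/2$, so $\gamma$ must be large enough that $\gamma\mf{a}\geqslant1$ yet small enough that $\eta<1$; the paper takes $\gamma=1/2$ and $\eta=3/4+2\kappa$, giving $2p+q=2$ and $3-2p-q=1\leqslant\mf{a}/2$. Your closing remark about trading $\lambda_{t}^{-\delta}$ against a $(1+\|w\|)^{\mf{a}\delta}$ loss points in this direction, but without this exponent bookkeeping the claimed quadratic bound does not follow. Incidentally, the strictness $\mf{a}>2$ is not what makes this trade work (it already closes at $\mf{a}=2$); it is needed so that $\|w^{\mH}\|_{H^{1/2+2\kappa}}\leqslant\|w^{\mH}\|_{H^{1-1/\mf{a}-2\kappa}}\lesssim\mathbf{N}_{t}^{\kappa}$ for small $\kappa$.
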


\begin{proof}
   \textit{The $ 2 Y \sotimes w + Y^{\otimes 2}  $ term.} We can estimate via
   Lemma~\ref{lem:parap} and Lemma~\ref{lem:high-feq-ref}
   with $ \delta = 1/2 $, since $ \mf{a} \geqslant 2 $:
   \begin{equ}
      \| Y \sotimes w \|_{H^{\frac{7}{4} \kappa}} \lesssim \| Y 
      \sotimes w  \|_{\mC^{2 \kappa}_{2}} \lesssim \| Y  \|_{\mC^{2 \kappa}}
      \| w \|_{H^{2 \kappa}} \lesssim \mathbf{N}_{t}^{\kappa} ( \|
      w^{\mL} \|_{H^{2 \kappa}} + \mathbf{N}_{t}^{\kappa}) \;.
   \end{equ}
   Hence, since $ \frac{7}{4} \kappa - \frac{3}{2} \kappa  > 0 $ 
   \begin{equ}
      \langle w^{\mL},  \div ( Y \sotimes w)
      \rangle \lesssim \| w^{\mL} \|_{H^{1 -  \frac{3}{2} \kappa}} (\|
      w^{\mL} \|_{H^{2 \kappa}} + \mathbf{N}_{t}^{\kappa})
      \mathbf{N}_{t}^{\kappa} \;.
   \end{equ}
   Similarly for the $ Y^{\otimes 2} $ term 
   \begin{equ}
      \langle w^{\mL}, \div ( Y^{\otimes 2}) \rangle \lesssim
      \| w^{\mL} \|_{H^{1-2\kappa}} ( \mathbf{N}_{t}^{\kappa})^{2} \;.
   \end{equ}

\textit{The commutator term.} 
   Here by definition we have
   \begin{equ}
      C^{\para}(w, Q^{\mH}) = ( (\partial_{t}- \Delta) w)  \para
      Q^{\mH} + \mathrm{Tr} [ (\nabla w) \para (\nabla Q^{\mH}) ] \;.
   \end{equ}
   As for the first term, from \eqref{eqn:w}
   \begin{equ}[e:support]
      ( (\partial_{t}- \Delta) w)  \para
      Q^{\mH} = \left[ \mathbf{P} \div (w^{\otimes 2} + D
      \sotimes w + Y^{\otimes 2}) \right] \para Q^{\mH} \;.
   \end{equ}
   Let us start with the quadratic part, which has the worse homogeneity. By
   Sobolev embeddings we obtain
$      \| w^{\otimes 2} \|_{L^{2}} \lesssim \| w
      \|_{L^{4}}^{2} \lesssim \| w
      \|^{2}_{H^{1/2}}$
   so that for any $ \gamma > 0 $, by Lemma~\ref{lem:parap} and
\eqref{e:QH-bd}
   \begin{equs}
      \| [ \mathbf{P} \div ( w^{\otimes 2})] \para Q^{\mH}
      \|_{H^{1 - 2 \kappa - \gamma}} & \lesssim \| w^{ \otimes 2}
      \|_{L^{2}} \| Q^{\mH}  \|_{\mC^{2 - \kappa - \gamma}}\\
      & \lesssim \| w \|_{H^{1/2}}^{2} \| Q^{\mH} \|_{\mC^{2 - \kappa -
      \gamma}} \lesssim \| w \|_{H^{1/2}}^{2}
      \lambda_{t}^{- \gamma} \mathbf{N}_{t}^{\kappa} \;.
   \end{equs}
   Therefore we find
   \begin{equs}
      \langle w^{\mL}, \div \left( [ \mathbf{P} \div
      ( w^{\otimes 2} )] \para Q^{\mH} \right) \rangle & \lesssim \|
      w^{\mL} \|_{H^{2 \kappa + \gamma}} \| [ \mathbf{P} \div ( w^{\otimes 2})] \para Q^{\mH}
      \|_{H^{1 - 2 \kappa - \gamma}} \\
      & \lesssim\| w^{\mL} \|_{H^{2 \kappa + \gamma}}\| w
      \|_{H^{1/2 + \kappa}}^{2} \lambda_{t}^{- \gamma}
      \mathbf{N}_{t}^{\kappa} \;.
   \end{equs}
Let us now use the decomposition $ w = w^{\mH} + w^{\mL} $, so that we can
further bound
   \begin{equs}
      \langle w^{\mL},  \div & \left( [ \mathbf{P} \div
      ( w^{\otimes 2} )] \para Q^{\mH} \right) \rangle \\
      & \lesssim \|
      w^{\mL} \|_{H^{\gamma+ 2 \kappa }} (\|
      w^{\mL} \|^{2}_{H^{1/2+ \kappa}}+ \| w^{\mH} \|^{2}_{H^{1/2 + \kappa}} )(1+\| w \|)^{-
   \gamma \mf{a}} \mathbf{N}_{t}^{\kappa} \\
      & \lesssim \| w^{\mL} \|_{H^{\gamma + 2 \kappa }} (\|
      w^{\mL} \|^{2}_{H^{1/2+ \kappa}}+ (\mathbf{N}_{t}^{\kappa})^{2} )(1 + \| w \|)^{-
      \gamma \mf{a}} \mathbf{N}_{t}^{\kappa} \;,
   \end{equs}
   where in the last step we used Lemma~\ref{lem:high-feq-ref} with $ \delta =
   1/ \mf{a} $ together with
   the assumption $ \mf{a} > 2 $ so that, provided $ \kappa_{0}(\mf{a}) > 0 $ is sufficiently
   small: $$ \| w^{\mH}
   \|_{H^{1/2 + 2 \kappa}} \leqslant \| w^{\mH} \|_{H^{1 - 1/
   \mf{a} - 2\kappa}} \lesssim \mathbf{N}_{t}^{\kappa} \;.$$ 
Next, we can use interpolation to
   bound, for any $ \eta \geqslant (1/2 + 2 \kappa) \wedge (\gamma + 2 \kappa) $:
   \begin{equs}
      \| w^{\mL} \|_{H^{1/2 +  \kappa}} & \lesssim \| w^{\mL}
      \|_{H^{\eta}}^{p (\eta, \kappa)} \| w^{\mL} \|^{1 - p (\eta,
      \kappa)} \;, \qquad p (\eta, \kappa) = \frac{1/2 + \kappa}{\eta} \in
      (0, 1) \;, \\
      \| w^{\mL} \|_{H^{\gamma + 2 \kappa}} & \lesssim \| w^{\mL}
      \|_{H^{\eta}}^{q (\eta, \kappa)} \| w^{\mL} \|^{1 - q (\eta,
      \kappa)} \;, \qquad q (\eta, \kappa) = \frac{\gamma + 2\kappa}{\eta} \in
      (0, 1) \;.
   \end{equs}
Hence we obtain
\begin{equs}
 \| w^{\mL} \|_{H^{\gamma + 2 \kappa }} \|
      w^{\mL} \|^{2}_{H^{1/2+ \kappa}}& (1 + \| w \|)^{-
      \gamma \mf{a}} \\
&  \lesssim \| w^{\mL} \|_{H^{\eta}}^{q} \|
      w^{\mL} \|^{2p}_{H^{\eta}} \| w^{\mL} \|^{3 - 2 p - q}(1 + \| w \|)^{-
      \gamma \mf{a}}  \;.
\end{equs}
To eventually find a useful estimate we must pick $ \eta, \gamma $ such that
for all \(\kappa\) small
\begin{equs}
2 p(\eta, \kappa)+ q(\eta, \kappa) \leqslant 2 \;, \qquad 3 -2 p(\eta,
\kappa)-q(\eta, \kappa)
\leqslant \gamma \mf{a} \;.
\end{equs}
For example, if we fix $ \gamma = 1/2 $ and $ \eta = 3/4 + 2 \kappa $, then the
first inequality is satisfied:
\begin{equs}
2 p + q = \frac{1 + \gamma + 4 \kappa }{\eta} = \frac{3/2 + 4 \kappa}{3/4 + 2
\kappa} = 2 \frac{3/2 + 4 \kappa}{3/2 + 4 \kappa} = 2\;,
\end{equs}
and the second inequality as well, since for $ \mf{a} \geqslant 2 $:
\begin{equs}
3 - 2 p - q =  1 \leqslant \gamma \mf{a} \;.
\end{equs}
With such a choice, we can finally bound
\begin{equs}
  \langle w^{\mL},  \div \left( [ \mathbf{P} \div
      ( w^{\otimes 2} )] \para Q^{\mH} \right) \rangle  \lesssim \|
         w^{\mL} \|^{2}_{H^{\eta}} \mathbf{N}_{t}^{\kappa} + \| w^{\mL}
\|_{H^{\eta}} (\mathbf{N}^{\kappa}_{t})^{3} \;,
\end{equs}
   which is of the required order. 
Now we can proceed to the last two terms in
   \eqref{e:support}. First of all, since $ D = 2(X + Y) $, we have
   \begin{equs}
      \| D \sotimes w + Y^{\otimes 2}\|_{H^{- \kappa}} & \lesssim \| D
      \|_{\mC^{- \kappa}} \| w \|_{H^{2 \kappa}} +
(\mathbf{N}_{t}^{\kappa})^{2} \\
& \lesssim
      \mathbf{N}_{t}^{\kappa} \| w
      \|_{H^{2 \kappa}} + (\mathbf{N}_{t}^{\kappa})^{2} \;.
   \end{equs}
   Therefore
   \begin{equs}
      \| [ \mathbf{P} \div (D \sotimes w + Y^{\otimes 2})] \para
      Q^{\mH}  \|_{H^{1 - 2 \kappa}} & \lesssim \| D \sotimes w + Y^{\otimes
      2}\|_{H^{- \kappa}} \| Q^{\mH} \|_{H^{2 - \kappa}}\\
      & \lesssim ( \mathbf{N}_{t}^{\kappa})^{2} \| w \|_{H^{2 \kappa}} +
      (\mathbf{N}_{t}^{\kappa})^{3} \;.
   \end{equs}
   Plugging this into the desired inner product we conclude
   \begin{equs}
      \langle w^{\mL},  \div \left( [ \mathbf{P} \div (D
      \sotimes w + Y^{\otimes 2})] \para Q^{\mH} \right) \rangle & \lesssim
      \| w^{\mL} \|_{H^{ 2\kappa}} ( (\mathbf{N}_{t}^{\kappa})^{2} \| w
      \|_{H^{2 \kappa}} + (\mathbf{N}_{t}^{\kappa})^{3}) \\
      & \lesssim \| w^{\mL} \|_{H^{ 2\kappa}} ( (\mathbf{N}_{t}^{\kappa})^{2} \|
      w^{\mL} \|_{H^{2 \kappa}} + (\mathbf{N}_{t}^{\kappa})^{3}) \;,
   \end{equs}
   where in the last step we made use of Lemma~\ref{lem:high-feq-ref}. This
   concludes the proof.
\end{proof}

\section{Global solutions}\label{sec:global-solutions}

As in the previous section, we work under Assumption~\ref{assu:L2IC} and assume
that the initial condition $ u_{0} $ to \eqref{eqn:w} satisfies
\begin{equ}
   u_{0} \in L^{2} \cap \mC^{-1 + \kappa}\;,
\end{equ}
for some $ \kappa > 0 $.
The objective of this section is to build on Proposition~\ref{prop:nrg-estimate} to obtain
global well-posedness for \eqref{eqn:daPrato-Debussche}. 
The first step is to apply some
interpolation inequalities to obtain a bound on the distance between the
successive stopping times $ T_{i +1} - T_{i} $. We start with a corollary of
Proposition~\ref{prop:nrg-estimate}.
\begin{corollary}\label{cor:unifrm-bd}
   In the setting of Proposition~\ref{prop:nrg-estimate}, for some $ \kappa_{0}
   > 0 $ there exists a constant $ C_{1} >0 $ and increasing
   continuous maps $C_{2}, C_{3} \colon \RR_{+} \to (0, \infty) $ such that for
   all $ \kappa \leqslant \kappa_{0} $ we can
   estimate uniformly over $ i \in \NN, i \geqslant i_{0} $ and $ t \in [T_{i}, T_{i+1}) $
   \begin{equ}
      \partial_{t} \| w^{\mL}_{t} \|^{2}+ \frac{1}{2} \| w^{\mL}_{t} \|_{H^{1}}^{2} 
      \leqslant C_{1} \log{
      (\lambda_{t})} \| w^{\mL}_{t} \|^{2} + C_{2} (\mathbf{N}^{\kappa}_{t}) \|
      w^{\mL}_{t} \|^{2} + C_{3}(\mathbf{N}^{\kappa}_{t}) \;,
   \end{equ}
   In particular, we can estimate
   \begin{equs}
      \sup_{T_{i} \leqslant t < T_{i+1}} &\| w^{\mL}_{t} \|^{2} + 
      \frac{1}{2} \int_{T_{i}}^{T_{i+1}} \|
      w^{\mL}_{s} \|_{H^{1}}^{2} \ud s \\
      & \leqslant \left( \| w^{\mL}_{T_{i}} \|^{2} +  C_{3} (
      \mathbf{N}^{\kappa}_{T_{i+1}}) \right) \cdot \exp \{
      (T_{i+1} - T_{i}) [C_{2} ( \mathbf{N}^{\kappa }_{T_{i +1}}) + C_{1} \log{(\lambda_{T_{i}})}] \}\;.
   \end{equs}
\end{corollary}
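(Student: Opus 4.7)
The strategy is to start from Proposition~\ref{prop:nrg-estimate} and convert each term on its right-hand side into one of three admissible shapes: (i) something absorbed into $-\tfrac12\|w^{\mL}\|_{H^1}^2$ on the left, (ii) a term $C_1\log(\lambda_t)\|w^{\mL}\|^2$, or (iii) a remainder $C_2(\mathbf{N}^\kappa_t)\|w^{\mL}\|^2+C_3(\mathbf{N}^\kappa_t)$. The renormalisation term $2\mf{r}_{\lambda_t}(t)\|w^{\mL}\|^2$ is already of type (ii) by \eqref{eqn:ren-const}, while the quadratic form $\langle w^{\mL},\mA_t^{\lambda_t}w^{\mL}\rangle$ is of type (iii), using an upper bound on the symmetric part of $\mA_t^{\lambda_t}$ controlled by $\mathbf{N}^\kappa_t$ as built into \eqref{e:def-N}.

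The delicate term is $C\lambda_t^{\frac{1}{3}}(\mathbf{N}^\kappa_t)^k\|w^{\mL}\|_{H^{1-\frac{3}{2}\kappa}}$. The choice $\mf{a}=3$ gives the identity $\lambda_t^{1/3}=1+\|w_{T_i}\|\leqslant 1+\|w_t\|$ on $[T_i,T_{i+1})$. Combining $\|w_t\|\leqslant\|w^{\mL}_t\|+\|w^{\mH}_t\|$ with Lemma~\ref{lem:high-feq-ref} (applied with $\delta=1/\mf{a}=1/3$) yields $\|w^{\mH}_t\|\lesssim\mathbf{N}^\kappa_t$, so the offending term is bounded by $C(\mathbf{N}^\kappa_t)^{k+1}(1+\|w^{\mL}_t\|)\|w^{\mL}\|_{H^{1-\frac{3}{2}\kappa}}$. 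The interpolation
\begin{equation*}
\|w^{\mL}\|_{H^{1-\frac{3}{2}\kappa}}\lesssim\|w^{\mL}\|_{H^1}^{1-\frac{3}{2}\kappa}\|w^{\mL}\|^{\frac{3}{2}\kappa}
\end{equation*}
together with Young's inequality at conjugate exponents $\bigl(\frac{2}{1-\frac{3}{2}\kappa},\frac{2}{1+\frac{3}{2}\kappa}\bigr)$ produces a contribution $\leqslant\tfrac14\|w^{\mL}\|_{H^1}^2$ absorbed on the left plus a term of type (iii) (the $\|w^{\mL}\|^2$ exponent works out to exactly $2$ after the exponent arithmetic because of the perfectly matched power $1$ in $1+\|w^{\mL}_t\|$). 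The remaining rest terms $(\mathbf{N}^\kappa_t)^k\bigl(\|w^{\mL}\|_{H^{1-\frac{3}{2}\kappa}}+\|w^{\mL}\|_{H^{1-\frac{3}{2}\kappa}}^2\bigr)$ are handled by the same interpolation--Young procedure, absorbing another $\tfrac14\|w^{\mL}\|_{H^1}^2$. Summing the contributions yields the stated differential inequality.

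For the integrated bound, fix $i\geqslant i_0$ and $t\in[T_i,T_{i+1})$. By Definition~\ref{def:stop-time-and-lambda} the frequency $\lambda_s=\lambda_{T_i}$ is constant on this interval, while $s\mapsto\mathbf{N}^\kappa_s$ is nondecreasing, so $C_2(\mathbf{N}^\kappa_s),C_3(\mathbf{N}^\kappa_s)$ may be replaced by their values at $T_{i+1}$. Setting $f(t)=\|w^{\mL}_t\|^2+\tfrac12\int_{T_i}^t\|w^{\mL}_s\|_{H^1}^2\,ds$ and using $\|w^{\mL}_t\|^2\leqslant f(t)$ reduces the differential inequality to $f'\leqslant Af+B$ with $A=C_1\log(\lambda_{T_i})+C_2(\mathbf{N}^\kappa_{T_{i+1}})$ and $B=C_3(\mathbf{N}^\kappa_{T_{i+1}})$. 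After enlarging $C_2$ so that $A\geqslant 1$, the scalar Gronwall inequality gives $f(t)\leqslant(f(T_i)+B)\,e^{A(t-T_i)}$, which is the desired bound once we take the supremum in $t\in[T_i,T_{i+1})$. The main obstacle is the treatment of the $\lambda_t^{1/3}$ prefactor; the rest amounts to interpolation bookkeeping and scalar Gronwall.
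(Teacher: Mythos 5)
Your argument follows the paper's proof essentially step for step: the quadratic-form bound $\langle w^{\mL},\mA_t^{\lambda_t}w^{\mL}\rangle\leqslant\mathbf{m}(\mathbf{N}_t^{\kappa})\|w^{\mL}\|^2$ (Proposition~\ref{prop:def-A}), the bound $\mf{r}_{\lambda_t}(t)\leqslant c\log\lambda_t$ from \eqref{eqn:ren-const}, the replacement $\lambda_t^{1/3}\lesssim 1+\|w_t\|\lesssim 1+\|w^{\mL}_t\|+\mathbf{N}_t^{\kappa}$ using $\mf{a}=3$ and Lemma~\ref{lem:high-feq-ref} with $\delta=1/\mf{a}$, absorption of the $H^{1-\frac32\kappa}$ norms into $\frac12\|w^{\mL}\|_{H^1}^2$ by interpolation and Young, and a Gronwall argument on $[T_i,T_{i+1})$ where $\lambda$ is constant and $t\mapsto\mathbf{N}_t^{\kappa}$ is nondecreasing. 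The only deviations (spelling out the Young exponents, and running Gronwall on $f(t)=\|w^{\mL}_t\|^2+\frac12\int_{T_i}^t\|w^{\mL}_s\|_{H^1}^2\,\mathrm{d}s$ instead of the variation-of-constants formula) are cosmetic.
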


\begin{proof}

From Proposition~\ref{prop:nrg-estimate} we have that for some $ C >0 $
   \begin{equs}
      \partial_{t} \| w^{\mL} \|^{2} \leqslant  & - \| w^{\mL}
      \|_{H^{1}}^{2} +  2\langle w^{\mL},
      \mA_{t}^{\lambda_{t}} w^{\mL} \rangle  
+  2 \mf{r}_{\lambda} (t) \|
   w^{\mL} \|^{2} +    C  \lambda_{t}^{\frac{1}{3}}
(\mathbf{N}_{t}^{\kappa})^{k} \cdot \| w^{\mL} \|_{H^{1- \frac{3}{2}
\kappa}}\\
& +  C (\mathbf{N}_{t}^{\kappa})^{k} (\| w^{\mL} \|_{H^{1 -
\frac{3}{2} \kappa}}+ \| w^{\mL} \|_{H^{1- \frac{3}{2}
\kappa}}^{2})   \;,
   \end{equs}
Regarding the quadratic form associated to $ \mA^{\lambda_{t}}_{t}$, it follows from 
   Proposition~\ref{prop:def-A} below that there exists an $ \mathbf{m} 
   (\mathbf{N}^{\kappa}_{t}) $ such that 
   \begin{equ}
      \langle w^{\mL}, \mA^{\lambda_{t}}_{t} w^{\mL} \rangle  \leqslant
      \mathbf{m} (\mathbf{N}^{\kappa}_{t}) \| w^{\mL} \|^{2} \;.
   \end{equ}
Regarding $ \mf{r}_{\lambda}(t) $ we find for some $ c >0 $ that $
\mf{r}_{\lambda} (t) \leqslant c \cdot \log{(\lambda_{t})} $, see
\eqref{eqn:ren-const}. For the term
involving \( \lambda_{t} \) we estimate $\lambda_{t}^{\frac{1}{3}} \lesssim (1+
\| w_{t} \|) $ for $ t \in [T_{i}, T_{i+1})$ since we have assumed that $
\mf{a} = 3 $.
Hence overall we find $ C_{1}, C_{2}, C_{3} $ as in the statement of the
corollary, such that
\begin{equs}
   \partial_{t} \| w_{t}^{\mL} \|^{2} 
   \leqslant - \frac{1}{2}  \| w^{\mL} \|_{H^{1}}^{2}+ (  C_{1}
\log{(\lambda_{t})}+ C_{2}( \mathbf{N}^{\kappa}_{t}) ) \| w^{\mL} \|^{2}  +
C_{3} (\mathbf{N}_{t}^{\kappa}) \;.
\end{equs}
Here we repeatedly use interpolation and Young's inequality for products so
that for any $
\eta, \ve \in (0, 1) $ there exists a $ C(\ve, \eta)> 0 $ such that
\begin{equ}
   \| w^{\mL} \|_{H^{\eta}}^{2} \leqslant  \ve \| w^{\mL} \|_{H^{1}}^{2} +
   C (\ve, \eta) \| w^{\mL} \|^{2} \;.
\end{equ}

As for the second estimate, we find for any $ t \in [T_{i}, T_{i+1}) $ and $
\mu = C_{2}(\mathbf{N}^{\kappa}_{T_{i+1}}) + C_{1}
\log{(\lambda_{T_{i}})} $:
\begin{equ}
   \| w^{\mL}_{t} \|^{2} \leqslant  e^{(t - T_{i}) \mu} \| w^{\mL}_{T_{i}}
   \|^{2}
    +  \int_{T_{i}}^{t}   - \frac{1}{2} e^{(t - s) \mu } \| w^{\mL}_{s} \|_{H^{1}}^{2}
    + e^{(t - s) \mu } C_{3}(\mathbf{N}^{\kappa}_{t})\ud s \;,
\end{equ}
so that
\begin{equs}
   \| w^{\mL}_{t} \|^{2}+  \frac{1}{2} \int_{T_{i}}^{t}
   \| w^{\mL}_{s} \|_{H^{1}}^{2}\ud s 
   &\leqslant  \| w^{\mL}_{t} \|^{2}+ \frac{1}{2} \int_{T_{i}}^{t} e^{(t - s) \mu }
   \| w^{\mL}_{s} \|_{H^{1}}^{2} \ud s \\
   &\leqslant e^{(T_{i+1} - T_{i}) \mu} \| w^{\mL}_{T_{i}}
   \|^{2} + C_{3} ( \mathbf{N}^{\kappa}_{T_{i+1}}) \int_{0}^{t} e^{(t -s)
   \mu} \ud s \\
   &\leqslant e^{(T_{i+1} - T_{i}) \mu} \| w^{\mL}_{T_{i}}
   \|^{2} + C_{3} ( \mathbf{N}^{\kappa}_{T_{i+1}}) \mu^{-1}e^{(T_{i+1} -
   T_{i}) \mu }\;,
\end{equs}
which implies the desired result.
\end{proof}
To complete the $ L^{2} $ estimate, we must control the jump of the norm at the
stopping times $ T_{i} $. 
\begin{lemma}\label{lem:interval}
   In the setting of Corollary~\ref{cor:unifrm-bd}, consider $ i \in
   \NN_{+} $ such that $ i \geqslant i_{0} (u_{0}) $, with $ i_{0}
(u_{0}) $ as in \eqref{e:i0}, and fix $ t > 0 $. Then if $ T_{i+1} <
T^{\mathrm{fin}} \wedge t $ there exists a constant $ C (\mathbf{N}_{t}^{\kappa}) > 0 $ such that
   \begin{equ}
      T_{i+1} - T_{i} \geqslant \frac{1}{C (\mathbf{N}_{t}^{\kappa})(1 + \log{(1 +i)}
      )} \cdot \log{ \left( \frac{i^{2} + 2 i - C(\mathbf{N}_{t}^{\kappa})}{
      i^{2}+ C(\mathbf{N}_{t}^{\kappa})}  \right) } \;.
   \end{equ}
\end{lemma}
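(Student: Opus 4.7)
The plan is to invert the exponential estimate of Corollary~\ref{cor:unifrm-bd} by pairing it with the defining property of the stopping times $T_{i}, T_{i+1}$ and with Lemma~\ref{lem:high-feq-ref}, which controls the gap between the full norm $\|w_{t}\|$ and the low-frequency norm $\|w^{\mL}_{t}\|$.

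First I would convert the bounds on $\|w\|$ given by Definition~\ref{def:stop-time-and-lambda} into bounds on $\|w^{\mL}\|$. Applying Lemma~\ref{lem:high-feq-ref} with $\delta = 1/\mf{a} = 1/3$ yields $\|w^{\mH}_{t}\|_{L^{2}} \lesssim \|w^{\mH}_{t}\|_{H^{1-2\kappa - 1/3}} \leqslant C(\mathbf{N}_{t}^{\kappa})$ uniformly in $t \in [0, T^{\mathrm{fin}})$, so at time $T_{i}$ one has $\|w^{\mL}_{T_{i}}\|^{2} \leqslant (i + C(\mathbf{N}_{t}^{\kappa}))^{2} \leqslant i^{2} + C(\mathbf{N}_{t}^{\kappa})$, after absorbing linear-in-$i$ terms into the log factor in the conclusion. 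On the other hand, since $T_{i+1} < T^{\mathrm{fin}} \wedge t$ and $\|w_{s}\|$ must reach level $i+1$ by the time $T_{i+1}$, at least up to right-limits one has $\|w_{T_{i+1}}\| \geqslant i+1$, whence $\|w^{\mL}_{T_{i+1}}\|^{2} \geqslant (i+1 - C(\mathbf{N}_{t}^{\kappa}))^{2} \geqslant i^{2} + 2i - C(\mathbf{N}_{t}^{\kappa})$.

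Next I plug these two bounds into the second inequality in Corollary~\ref{cor:unifrm-bd}, evaluated at $T_{i+1}^{-}$. Since $\lambda_{T_{i}} = (1+i)^{\mf{a}}$ with $\mf{a}=3$, one has $\log(\lambda_{T_{i}}) = 3\log(1+i)$, and so the corollary yields
\begin{equ}
   i^{2} + 2i - C(\mathbf{N}_{t}^{\kappa}) \leqslant \bigl(i^{2} + C(\mathbf{N}_{t}^{\kappa})\bigr) \exp\bigl\{ (T_{i+1}-T_{i}) \bigl[C_{2}(\mathbf{N}^{\kappa}_{T_{i+1}}) + 3 C_{1}\log(1+i) \bigr] \bigr\} \;,
\end{equ}
after enlarging $C(\mathbf{N}_{t}^{\kappa})$ to absorb the additive term $C_{3}(\mathbf{N}^{\kappa}_{T_{i+1}})$ (recall $i \geqslant i_{0}$ and $\mathbf{N}^{\kappa}_{s}$ is monotone in $s$, so $\mathbf{N}^{\kappa}_{T_{i+1}} \leqslant \mathbf{N}^{\kappa}_{t}$). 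Taking logarithms, dividing, and renaming the constants into a single $C(\mathbf{N}_{t}^{\kappa}) \gtrsim C_{2}(\mathbf{N}_{t}^{\kappa}) + C_{1}$ gives exactly the stated lower bound on $T_{i+1} - T_{i}$.

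The only subtle point is the continuity argument used to pass from $\|w_{s}\| \leqslant i+1$ on $[T_{i}, T_{i+1})$ to $\|w_{T_{i+1}}\| \geqslant i+1$. This is what I expect to be the main obstacle: one needs that the map $s \mapsto \|w_{s}\|$ is sufficiently regular near $T_{i+1}$ to ensure the infimum in \eqref{eqn:def-T} is attained, or at least that the energy estimate propagates continuously up to $T_{i+1}^{-}$. In practice, since $w \in \mM^{\gamma/2} \mC^{3\kappa/2}$ on $[T_{i}, T_{i+1})$ by Proposition~\ref{prop:wp-phi}, Corollary~\ref{cor:unifrm-bd} delivers a uniform bound on $\|w^{\mL}_{s}\|$ on the whole half-open interval and one simply passes to the limsup as $s \uparrow T_{i+1}$ on both sides; since the exponential estimate is continuous in the upper limit and since $\|w^{\mH}_{s}\|_{L^{2}}$ is uniformly bounded by $C(\mathbf{N}_{t}^{\kappa})$, the jump of $\|w_{s}\|$ at $T_{i+1}$ is at most a noise-controlled constant, which can be absorbed into $C(\mathbf{N}_{t}^{\kappa})$ exactly as above.
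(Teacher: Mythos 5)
Your overall strategy is the paper's: invert the exponential estimate of Corollary~\ref{cor:unifrm-bd} on $[T_i,T_{i+1})$ and convert the stopping-time levels $\|w_{T_i}\|\simeq i$, $\|w_{T_{i+1}}\|\geqslant i+1$ into bounds on $\|w^{\mL}\|$ via Lemma~\ref{lem:high-feq-ref}; the treatment of $\log\lambda_{T_i}=\mf{a}\log(1+i)$, the monotonicity of $\mathbf{N}^{\kappa}$, and the left limit at $T_{i+1}$ are all fine. The gap is in your choice $\delta=1/\mf{a}$ in Lemma~\ref{lem:high-feq-ref}. That choice only gives the order-one bound $\|w^{\mH}_t\|\lesssim\mathbf{N}_t^{\kappa}$, so squaring yields $\|w^{\mL}_{T_i}\|^{2}\leqslant i^{2}+2C(\mathbf{N}_t^{\kappa})\,i+C^{2}$ and $\|w^{\mL}_{T_{i+1}-}\|^{2}\geqslant i^{2}+2\bigl(1-C(\mathbf{N}_t^{\kappa})\bigr)i+\dots$, i.e.\ error terms that are \emph{linear in $i$} with an $\mathbf{N}$-dependent coefficient. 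These cannot be ``absorbed into the log factor'': the lemma's ratio is $(i^{2}+2i-C)/(i^{2}+C)$ with only constant corrections, and this precise form is what makes $\log(\cdot)\sim 2/i$ and hence $\sum_i(T_{i+1}-T_i)$ divergent in the proof of Theorem~\ref{thm:existence-uniqueness-global-solutions}. With your bounds the logarithm behaves like $2\bigl(1-2C(\mathbf{N}_t^{\kappa})\bigr)/i$, which is negative as soon as $C(\mathbf{N}_t^{\kappa})>1/2$, so the step as written proves nothing (and a correspondingly weakened conclusion would no longer suffice downstream).

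The fix, which is exactly what the paper does, is to take $\delta=2/\mf{a}$ (admissible because $\mf{a}>2$ guarantees $2/\mf{a}<1-2\kappa$ for small $\kappa$, so $H^{1-2\kappa-\delta}\hookrightarrow L^{2}$). Then Lemma~\ref{lem:high-feq-ref} gives the \emph{decaying} bound $\|w^{\mH}_t\|\lesssim\mathbf{N}_t^{\kappa}(1+\|w_t\|)^{-1}\lesssim\mathbf{N}_t^{\kappa}\,i^{-1}$ on $[T_i,T_{i+1})$, whence
\begin{equ}
   \| w^{\mL}_{T_{i+1}-} \| \geqslant (i+1) - c\,\mathbf{N}_t^{\kappa}(i+1)^{-1}\;,\qquad
   \| w^{\mL}_{T_i} \| \leqslant i + c\,\mathbf{N}_t^{\kappa}\,i^{-1}\;,
\end{equ}
and squaring produces numerator $i^{2}+2i-O(\mathbf{N}_t^{\kappa})$ and denominator $i^{2}+O\bigl((\mathbf{N}_t^{\kappa})^{2}\bigr)+C_{3}(\mathbf{N}_t^{\kappa})$ with no linear-in-$i$ error, which is what the stated inequality requires. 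With this single correction the rest of your argument goes through as in the paper.
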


\begin{proof}
   We can use Corollary~\ref{cor:unifrm-bd} to bound
\begin{equ}
   T_{i +1} - T_{i} \geqslant \frac{1}{C_{2} (\mathbf{N}_{t}^{\kappa}) +
   \log{(\lambda_{T_{i}})}} \cdot \log{ \left( \frac{\|
   w^{\mL}_{T_{i+1}-} \|^{2}}{ \| w^{\mL}_{T_{i}} \|^{2} +
   C_{3} (\mathbf{N}_{t}^{\kappa})} \right)} \;.
\end{equ}
Now, since $ T_{i +1} < T^{\mathrm{fin}} $, from the definition of the stopping
time we have for some $ c > 1 $
\begin{equs}
   \| w^{\mL}_{T_{i+1}-} \| & \geqslant \| w_{T_{i+1}} \| - \|
   w^{\mH}_{T_{i+1} -} \| \geqslant (i+1) - c \mathbf{N}_{t}^{\kappa}(i +1)^{-1}  \;, \\
   \| w^{\mL}_{T_{i}} \| & \leqslant \| w_{T_{i}} \|+ \| w
   ^{\mH}_{T_{i}} \|\leqslant i + c \mathbf{N}_{t}^{\kappa} i^{-1}  \;,
\end{equs}
by Lemma~\ref{lem:high-feq-ref} with $ \delta = 2/ \mf{a} \in (0, 1 - 2\kappa) $
(since $ \mf{a} > 2 $, for $ \kappa > 0 $ sufficiently small).
Recall here that since $\lambda_t$ jumps at $t = T_i$ by \eqref{e:def-lambda}, we also have jumps in
the definitions of $w^\mL$, see \eqref{eqn:def-phi-high}.
Hence we obtain that 
\begin{equ}
   T_{i +1} - T_{i} \geqslant \frac{1}{C_{2} (\mathbf{N}_{t}^{\kappa}) +
   \mf{a}\log{(1+i)}} \cdot \log{ \left( \frac{ (i + 1)^{2} - 2
         c \mathbf{N}_{t}^{\kappa} }{ i^{2} + 2 c \mathbf{N}_{t}^{\kappa}+ c^{2}
   (\mathbf{N}_{t}^{\kappa})^{2} i^{-2} + C_{3} (\mathbf{N}_{t}^{\kappa})  } \right)} \;,
\end{equ}
from which the result follows.
\end{proof}
The previous lemma gives us a control on the explosion time of the $
L^{2} $ norm. Next we show that if $ T^{\mathrm{fin}} < \infty $, then
$ \lim_{t \uparrow T^{\mathrm{fin}}} \| w^{\mL}_{t} \| = \infty $, meaning
that the explosion of the $ L^{2} $ norm is a necessary (and of course
sufficient) condition for finite-time blow-up. For this purpose we require
higher regularity estimates.

\begin{lemma}\label{lem:higher-reg}
   In the setting of Corollary~\ref{cor:unifrm-bd} there exists a $ \kappa_{0}
   > 0 $ such that the following holds for any $ \kappa \in (0,
   \kappa_{0})$ and $ \ve \in (0, \kappa) $ and. Fix any $ M>1, T > 0 $ such that
   \begin{equ}
      \| w^{\mL}_{0} \|_{H^{\ve}}^{2} + \sup_{0 \leqslant t \leqslant T \wedge
      T^{\mathrm{fin}}} \| w^{\mL}_{t}
      \|^{2} + \int_{0}^{T \wedge T^{\mathrm{fin}}} \| w^{\mL}_{t}
      \|_{H^{1}}^{2}\ud t \leqslant M \;.
   \end{equ}
   Then there exists a $ C (T, M, \mathbf{N}_{T}^{\kappa} ) \in (0, \infty) $ such that
   \begin{equ}
      \sup_{ 0 \leqslant t \leqslant T \wedge T^{\mathrm{fin}}} \|
      w^{\mL}_{t} \|_{H^{\ve}}^{2} \leqslant C(T, M, \mathbf{N}_{T}^{\kappa}) \;.
   \end{equ}
\end{lemma}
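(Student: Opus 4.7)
The plan is to derive a differential inequality for $\|w^\mL\|_{H^\varepsilon}^2$ in the same spirit as Proposition~\ref{prop:nrg-estimate}, and then close it with Grönwall's lemma, leveraging the fact that $\|w^\mL\|_{H^1}^2$ is already integrable in time by assumption. Concretely, testing \eqref{e:wL} against $(-\Delta)^\varepsilon w^\mL$ produces
\begin{equ}
\tf12\partial_t \|w^\mL\|_{H^\varepsilon}^2 + \|w^\mL\|_{H^{1+\varepsilon}}^2 = \sum_{k=1}^{4} J_k\;,
\end{equ}
where the $J_k$ are the $H^\varepsilon$-lifts of the four pairings in~\eqref{e:fourTerms}. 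The strategy is to absorb the worst factor of $\|w^\mL\|_{H^{1+\varepsilon}}$ appearing in each $J_k$ into the dissipation on the left and show that everything else is bounded by $\Psi_t \|w^\mL\|_{H^\varepsilon}^2 + R_t$ for some $\Psi_t, R_t \in L^1([0, T \wedge T^{\mathrm{fin}}])$ whose norms are controlled by $M$, $T$ and $\mathbf{N}_T^\kappa$.

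For the singular operator term, I expect a variant of Proposition~\ref{prop:def-A} combined with Lemma~\ref{lem:low-freq} to yield $\langle (-\Delta)^\varepsilon w^\mL, \mA_t^{\lambda_t} w^\mL\rangle \leqslant \tf14 \|w^\mL\|_{H^{1+\varepsilon}}^2 + \mathbf{m}_\varepsilon(\mathbf{N}_t^\kappa)\|w^\mL\|_{H^\varepsilon}^2$ for $\varepsilon < \kappa$, while the renormalisation $\mf{r}_{\lambda_t}$ contributes at most $\log(1+M)$ by the hypothesis $\sup_{t \leqslant T \wedge T^{\mathrm{fin}}}\|w^\mL_t\|^2 \leqslant M$. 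For the remaining three $J_k$, I would rerun the proofs of Lemmas~\ref{lem:interp}, \ref{lem:bd-rest} and~\ref{lem:bd-4} with every Sobolev index shifted by $\varepsilon$: since $\varepsilon < \kappa$ all the regularity constraints from those lemmas survive, and factors such as $\|w^\mL\|_{H^{1-\frac{3}{2}\kappa}}$ become $\|w^\mL\|_{H^{1+\varepsilon - \frac{3}{2}\kappa}} \lesssim \|w^\mL\|_{H^{1+\varepsilon}}$, which we absorb via Young's inequality. The cubic contribution from $\langle (-\Delta)^\varepsilon w^\mL, \div((w^\mL)^{\otimes 2})\rangle$ reduces, after a Ladyzhenskaya-type estimate and interpolation between $L^2$ and $H^1$, to a term of the form $C(\mathbf{N}^\kappa_t)\|w^\mL\|_{H^1}^2 \|w^\mL\|_{H^\varepsilon}^2$ plus dissipation, so that its prefactor is $L^1_t$ by assumption. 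The $w^\mH$ terms are treated via Lemma~\ref{lem:high-feq-ref} with $\delta = 1/\mf{a}$ exactly as in Proposition~\ref{prop:nrg-estimate}.

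Collecting these bounds yields an inequality of the form $\partial_t \|w^\mL\|_{H^\varepsilon}^2 \leqslant \Psi_t \|w^\mL\|_{H^\varepsilon}^2 + R_t$, to which the differential Grönwall inequality applies and produces the desired $L^\infty_t H^\varepsilon$ bound depending only on $T$, $M$ and $\mathbf{N}_T^\kappa$. The main obstacle I foresee is the bookkeeping for the commutator term $C^\para(w, Q^\mH)$: its treatment in Lemma~\ref{lem:bd-4} hinged on a tight balance between the interpolation exponents $p$ and $q$ and the decay gain $\lambda_t^{-\gamma}$, with the constraints $2p+q\leqslant 2$ and $3-2p-q\leqslant \gamma\mf{a}$. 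Lifting the argument to $H^\varepsilon$ forces a recomputation of $(\gamma, \eta, p, q)$ shifted by $\varepsilon$, and one needs to verify that for all sufficiently small $\varepsilon$ (say $\varepsilon < \kappa < \kappa_0$) both constraints continue to hold with strict inequality margin, which is where the smallness of $\kappa_0$ in the statement enters.
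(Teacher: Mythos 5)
Your overall scheme --- testing \eqref{e:wL} against $(-\Delta)^{\ve}w^{\mL}$, rerunning the four bounds of \eqref{e:fourTerms} with all Sobolev indices shifted by $\ve$, extracting from the cubic term $\langle(-\Delta)^{\ve}w^{\mL},\div((w^{\mL})^{\otimes2})\rangle$ a Gr\"onwall factor $\|w^{\mL}\|_{H^{1}}^{2}\in L^{1}_{t}$ via a Ladyzhenskaya/Kato--Ponce estimate and interpolation, and closing with Gr\"onwall --- is exactly the paper's, and your treatment of the analogues of \eqref{eqn:bd-interpolation-frac}--\eqref{eqn:bd-4-frac} matches it. The one genuine gap is the operator term: you invoke ``a variant of Proposition~\ref{prop:def-A}'' giving $\langle(-\Delta)^{\ve}w^{\mL},\mA_{t}^{\lambda_{t}}w^{\mL}\rangle\leqslant\frac{1}{4}\|w^{\mL}\|_{H^{1+\ve}}^{2}+\mathbf{m}_{\ve}(\mathbf{N}_{t}^{\kappa})\|w^{\mL}\|_{H^{\ve}}^{2}$ uniformly in the cutoff. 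No such statement is available: Proposition~\ref{prop:def-A} only controls the $L^{2}$ quadratic form and resolvent of the paracontrolled operator, and a bound for the twisted pairing against $(-\Delta)^{\ve}$, uniform in $\lambda$, would require redoing the paracontrolled commutator analysis at fractional regularity (a naive paraproduct estimate on $\langle(-\Delta)^{\ve}\varphi,\,2\nabla_{\mathrm{sym}}\mL_{\lambda}X\,\varphi\rangle$ loses $\ve+\kappa$ derivatives and does not close uniformly in $\lambda$). As written, this step does not follow from anything proved in the paper.

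The fix --- and the paper's actual route --- is to observe that under the hypotheses of the lemma no renormalised operator is needed at all: since $\sup_{t\leqslant T\wedge T^{\mathrm{fin}}}\|w^{\mL}_{t}\|^{2}\leqslant M$ and $\|w^{\mH}_{t}\|\lesssim\mathbf{N}_{t}^{\kappa}$ by Lemma~\ref{lem:high-feq-ref}, one has $\lambda_{t}\lesssim(1+\sqrt{M}+C\mathbf{N}_{T}^{\kappa})^{\mf{a}}$, so $\mL_{\lambda_{t}}X$ is a bona fide bounded function with norm controlled by $M$ and $\mathbf{N}_{T}^{\kappa}$. The paper then estimates directly $\langle(-\Delta)^{\ve}w^{\mL},\div(2(\mL_{\lambda_{t}}X)\sotimes w^{\mL})\rangle\leqslant C(M,\mathbf{N}_{T}^{\kappa})\|w^{\mL}\|_{H^{2\ve}}\|w^{\mL}\|_{H^{1}}\leqslant\delta\|w^{\mL}\|_{H^{1+\ve}}^{2}+C(M,\mathbf{N}_{T}^{\kappa},\delta)$, with neither $\mA^{\lambda_{t}}_{t}$ nor $\mf{r}_{\lambda_{t}}$ appearing; you already half-use this boundedness of $\lambda_{t}$ when bounding $\mf{r}_{\lambda_{t}}\lesssim\log(1+M)$, so you should push it one step further and drop the unproven variant. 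The same observation defuses the bookkeeping you worry about for $C^{\para}(w,Q^{\mH})$: since the constant is allowed to depend on $M$ and $\lambda_{t}\leqslant C(M,\mathbf{N}_{T}^{\kappa})$, the tight $(\gamma,\eta,p,q)$ balance of Lemma~\ref{lem:bd-4} is not needed here --- the paper simply reruns those steps and arrives at $C(M,\mathbf{N}_{T}^{\kappa})(1+\|w^{\mL}\|_{H^{1+2\ve-\frac{3}{2}\kappa}}^{2})$, which is absorbed into the dissipation since $\ve\leqslant\kappa$.
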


\begin{proof}

   To control the $ H^{\ve} $ norm we have to control  $
   \langle w^{\mL}, (- \Delta)^{\ve} w^{\mL} \rangle
   $. Here we find, as in \eqref{eqn:bd-operator}, \eqref{eqn:bd-interpolation},
\eqref{eqn:bd-3} and \eqref{eqn:bd-4}:
\minilab{e:fourTerms2}
   \begin{equs}
      \partial_{t} \langle w^{\mL}&, (- \Delta)^{\ve}
      w^{\mL} \rangle =  2 \langle (- \Delta)^{\ve}w^{\mL}, \partial_{t} w^{\mL}  \rangle \\
      &=  2\langle (- \Delta)^{\ve} w^{\mL}, \Delta
   w^{\mL} +  \div ( 2 ( \mL_{\lambda_{t}} X ) \sotimes
   w^{\mL}) \rangle \label{eqn:bd-operator-frac} \\ 
   & \quad +  2\langle (- \Delta)^{\ve}w^{\mL},  \div ( 2
   (\mH_{\lambda_{t}} X) \sotimes w^{\mL} - 2
   (\mH_{\lambda_{t}} X ) \rpara w^{\mL} ) \rangle
   \label{eqn:bd-interpolation-frac} \\
   & \quad + 2 \langle (- \Delta)^{\ve}w^{\mL},  \div ( 2 X \sotimes w^{\mH} - 2
   (\mH_{\lambda_{t}} X ) \rpara w^{\mH} )\rangle\label{eqn:bd-3-frac} \\ 
   & \quad + 2 \langle (- \Delta)^{\ve} w^{\mL},   \div
   (w^{\otimes 2} + 2 Y \sotimes w  +
   C^{\para }(w, Q^{\mH}) +  Y^{\otimes 2})\rangle \;.\quad 
   \label{eqn:bd-4-frac}
   \end{equs}
   We bound the right-hand side one term at the time. The value of the constants $
C >0$ may change from line to line. All calculations hold only for $
\kappa_{0}$ sufficiently small. For \eqref{eqn:bd-operator-frac}
   we have for any $ \delta \in (0, 1) $
   \begin{equs}
      \langle (- \Delta)^{\ve} w^{\mL}, \Delta w^{\mL} +
      \div ( 2 ( \mL_{\lambda_{t}} X ) \sotimes w^{\mL})
      \rangle
      & \leqslant - (1 - \delta)\| w^{\mL} \|_{H^{1 + \ve}}^{2} +
      C(M, \mathbf{N}_{T}^{\kappa}, \delta)\;,
   \end{equs}
where we used the estimate
\begin{equs}
\langle (- \Delta)^{\ve} w^{\mL}, \div ( 2 ( \mL_{\lambda_{t}} X ) \sotimes w^{\mL}) 
      \rangle & \leqslant C(M, \mathbf{N}_{T}^{\kappa}) \| w^{\mL}
\|_{H^{2 \ve}} \| w^{\mL} \|_{H^{1}} \\
& \leqslant \delta \| w^{\mL}
\|_{H^{1 + \ve}}^{2} + C(M, \mathbf{N}_{T}^{\kappa}, \delta) \;,
\end{equs}
for $ \ve $ sufficiently small.
   For \eqref{eqn:bd-interpolation-frac} we follow the proof of
   Lemma~\ref{lem:interp} to obtain for any $ \delta \in (0, 1) $
   \begin{equs}
      \langle (- \Delta)^{\ve} w^{\mL}, \div ( 2
      (\mH_{\lambda_{t}} X) & \sotimes w^{\mL} - 2
      (\mH_{\lambda_{t}} X ) \rpara w^{\mL} ) \rangle \\
      & \leqslant
      C(\mathbf{N}_{T}^{\kappa}) \| w^{\mL} \|_{H^{1/2 +3 \kappa + \ve}} \|
      w^{\mL} \|_{H^{1/2 - 2 \kappa + \ve }} \\
      & \leqslant \delta \| w^{\mL} \|_{H^{1 + \ve}}^{2} + C(M, \mathbf{N}_{T}^{\kappa}, \delta) \;,
   \end{equs}
   where the last bound follows by
   interpolation on the Sobolev norms.

   Next, following the proof of the first
   bound of Lemma~\ref{lem:bd-rest}, we obtain for \eqref{eqn:bd-3-frac} and
any choice of $ \delta \in (0, 1) $
   \begin{equs}
      \langle (- \Delta)^{\ve}w^{\mL},\div ( 2 X \sotimes w^{\mH} - 2
      (\mH_{\lambda_{t}} X ) \rpara w^{\mH} )\rangle 
      & \leqslant \| w^{\mL} \|_{H^{\eta+ 2\ve }} C(M,
      \mathbf{N}_{T}^{\kappa}) \\
& \leqslant \delta  \| w^{\mL} \|_{H^{1 + \ve }}^{2} +  C(M,
      \mathbf{N}_{T}^{\kappa}, \delta )\;,
   \end{equs}
for any $ \eta > 2/3+ \kappa $, making use of the estimate $ \lambda_{t} \lesssim
M^{\frac{\mf{a}}{2} } $ and since $ \eta + 2 \ve +
\kappa < 1 + \ve $ for $ \kappa_{0} $ sufficiently small.

   Finally, for \eqref{eqn:bd-4-frac} we start by estimating the cubic term. We
   can rewrite $ w^{\otimes 2} = ( w^{\mL})^{\otimes 2} +
   ( w^{\mH})^{\otimes 2} + 2 w^{\mL} \sotimes w^{\mH}$, and
   we will estimate one addend at a time.
   Starting with $ (w^{\mL})^{\otimes 2} $, we obtain 
   \begin{equs}
      \langle (- \Delta)^{\ve} w^{\mL},   \div
      ( (w^{\mL})^{\otimes 2}) \rangle &\leqslant C \| w^{\mL} \|_{H^{1 + \ve}}
      \| (w^{\mL})^{\otimes 2} \|_{H^{\ve}} \;.
   \end{equs}
Next, we can estimate via a Kato--Ponce type inequality (see for example
\cite[Thm~A.13]{KPV}) in dimension $ d =2 $ and by interpolation
\begin{equs}
\| (w^{\mL})^{\otimes 2} \|_{H^{\ve}} & \lesssim \| w^{\mL}
\|_{H^{\frac{1}{2} + \ve} } \|
w^{\mL} \|_{H^{\frac{1}{2}}}  \lesssim \| w^{\mL} \|^{\frac{1}{2}} \|
w^{\mL} \|^{\frac{1}{2}}_{H^{1}} \| w^{\mL} \|_{H^{\ve}}^{\frac{1}{2}
} \| w^{\mL} \|_{H^{1 + \ve}}^{\frac{1}{2}} \\
& \lesssim M  \|
w^{\mL} \|^{\frac{1}{2}}_{H^{1}} \| w^{\mL} \|_{H^{\ve}}^{\frac{1}{2}
} \| w^{\mL} \|_{H^{1 + \ve}}^{\frac{1}{2}}\;.
\end{equs}
Therefore we obtain that
\begin{equs}
\langle (- \Delta)^{\ve} w^{\mL},   \div
      ( (w^{\mL})^{\otimes 2}) \rangle & \leqslant C(M) \| w^{\mL}
\|_{H^{1 + \ve}}^{\frac{3}{2}} \|
w^{\mL} \|^{\frac{1}{2}}_{H^{1}} \| w^{\mL} \|_{H^{\ve}}^{\frac{1}{2} } \\
& \leqslant \delta \| w^{\mL} \|_{H^{1 + \ve}}^{2} + C(M, \delta) \|
w^{\mL} \|^{2}_{H^{1}} \| w^{\mL} \|_{H^{\ve}}^{2}\;,
\end{equs}
where we used Young's inequality with conjugate exponents $ 4/3 $ and $ 4
$.

   For the other two terms we follow the proof of Lemma~\ref{lem:bd-rest}. In
   particular, we use the bound \eqref{e:boundwH} to obtain
   \begin{equs}
      \langle ( - \Delta)^{\ve}  w^{\mL}, \div ( (w^{\mH})^{\otimes 2} ) \rangle
      & \leqslant C \| w^{\mL} \|_{H^{3/4 + 3 \kappa + 2 \ve}} \| w \|
      (\mathbf{N}_{t}^{\kappa})^{2} \\
      & \leqslant C(M, \mathbf{N}_{T}^{\kappa}) \| w^{\mL} \|_{H^{3/4
      + 3 \kappa + 2 \ve}}\\
      & \leqslant \delta \| w^{\mL} \|_{H^{1 +  \ve}} + C (M,
\mathbf{N}_{T}^{\kappa}, \delta ) \;,
   \end{equs}
since $ \ve \in (0, 1/6) $.
   As for the term involving $ w^{\mL} \sotimes w^{\mH}
   $, we follow once more the proof of Lemma~\ref{lem:bd-rest} to obtain
   \begin{equ}
      \| \div( w^{\mL} \sotimes w^{\mH}) \|_{H^{- 1/2 - 1/
      \mf{a} - 2 \kappa}} \lesssim \| w^{\mL} \|_{H^{1 - 1/
      \mf{a} - \kappa}} \mathbf{N}_{t}^{\kappa} \;,
   \end{equ}
   so that for some $ \eta \in (0, 1)$ (assuming that $
   \kappa_{0} $ is sufficiently small and since $ \mf{a} =3 $) and any $
   \delta \in (0, 1) $:
\begin{equs}
   \langle (- \Delta)^{\ve} w^{\mL}, \div ( w^{\mL} \sotimes w^{\mH})
      \rangle & \lesssim \| w^{\mL} \|_{H^{1/2 + 1/ \mf{a} + 2 \kappa + 2 \ve}} \|
      w^{\mL} \|_{H^{1 - 1/ \mf{a}- \kappa}} \mathbf{N}_{t}^{\kappa}\\
      & \leqslant C(\mathbf{N}_{T}^{\kappa})  \| w^{\mL} \|_{H^{\eta}}^{2} \\
      & \leqslant \delta \| w^{\mL} \|_{H^{1 + \ve}}^{2} +
      C( M,\mathbf{N}_{T}^{\kappa}, \delta)\;,
   \end{equs}
for $ \ve $ sufficiently small.

   To conclude our
   estimate for \eqref{eqn:bd-4-frac}, we have to bound
   \begin{equ}
    L \eqdef  \langle (- \Delta)^{\ve} w^{\mL}, \div ( 2 Y
      \sotimes w + C^{\para }(w, Q^{\mH}) + Y^{\otimes 2}) \rangle \;.
   \end{equ}
   Following the same steps as in Lemma~\ref{lem:bd-4}, we obtain
   \begin{equs}
L \leqslant C(M, \mathbf{N}_{T}^{\kappa}) (1 + \| w^{\mL} \|_{H^{1 + 2 \ve - \frac{3}{2}
      \kappa}}^{2}) 
      \leqslant \delta \| w^{\mL} \|_{H^{1 + \ve}}^{2} +  C(M,
      \mathbf{N}_{T}^{\kappa}, \delta) \;,
   \end{equs}
   since $ \ve \leqslant \kappa $, which is again of the desired order. 

Overall, choosing $ \delta \in
(0, 1)  $
   sufficiently small we have obtained
   \begin{equ}
      \partial_{t} \| w^{\mL} \|^{2}_{H^{\ve}} \leqslant
      C(M, T, \mathbf{N}_{T}^{\kappa}) (1 + \| w^{\mL} \|_{H^{1}}^{2} +  \|
      w^{\mL} \|^{2}_{H^{\ve}}\| w^{\mL} \|_{H^{1}}^{2}) \;.
   \end{equ}
Therefore, if we define $ g_{t} = 1 + \| w^{\mL}_{t} \|^{2}_{H^{\ve}} $, we
have
\begin{equs}
\partial_{t} g_{t} \leqslant C(M, T, \mathbf{N}_{T}^{\kappa}) (1 + \|
w^{\mL }_{t} \|_{H^{1}}^{2}) g_{t} \;,
\end{equs}
so that by Gronwall's inequality
\begin{equ}
g_{t} \leqslant g_{0} \exp \Big( C(M, T, \mathbf{N}_{T}^{\kappa})
\int_{0}^{T} (1 + \| w^{\mL}_{s} \|_{H^{1}}^{2}) \ud s \Big) \;,
\end{equ}
for all $ t \in [0, T] $, which is the desired bound.
\end{proof}
Finally, we can deduce our $ L^{2} $ blow-up criterion.

   \begin{corollary}\label{cor:explosion-condition}
      Under Assumption~\ref{assu:L2IC}, 
      if $ T^{\fin} < \infty $, then $ \limsup_{t\uparrow T^{\mathrm{fin}}} \|
      w_{t} \| = \infty$.
   \end{corollary}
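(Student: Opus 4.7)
The plan is to argue by contradiction. Assume $T^{\fin} < \infty$ but $M \eqdef \sup_{0 \leqslant t < T^{\fin}} \|w_t\| < \infty$. By the definition \eqref{eqn:def-T} of the stopping times, $\|w_{T_i}\| \geqslant i$ whenever $T_i < T^{\fin}$ and $i > i_0(u_0)$, so only finitely many $T_i$ can lie in $[0,T^{\fin})$. Hence there is some maximal $i^\ast \in \NN$ with $T_{i^\ast} < T^{\fin} \leqslant T_{i^\ast+1}$, and $\lambda_t \leqslant \lambda^{i^\ast}$ uniformly on $[0,T^{\fin})$. In particular, the definition of the high/low frequency split $w = w^{\mH} + w^{\mL}$ is piecewise constant in $\lambda_t$ and changes only finitely many times on $[0,T^{\fin})$.

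Next I would iterate Corollary~\ref{cor:unifrm-bd} across the finitely many intervals $[T_i,T_{i+1})$, $i \leqslant i^\ast$, to deduce that
\begin{equ}
   \sup_{0 \leqslant t < T^{\fin}} \|w^{\mL}_t\|^2 + \int_0^{T^{\fin}} \|w^{\mL}_s\|_{H^1}^2 \ud s \leqslant C(M,T^{\fin},\mathbf{N}^{\kappa}_{T^{\fin}}) < \infty\;,
\end{equ}
where at each stopping time one absorbs the (bounded) jump in $w^{\mH}$ using Lemma~\ref{lem:high-feq-ref}. This controls the quantity $M$ appearing in the hypothesis of Lemma~\ref{lem:higher-reg}.

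To upgrade this $L^2$-type control to a Hölder/Sobolev bound suitable for the restart step, I would localise in time: pick a small $t_0 \in (0,T^{\fin})$. By Proposition~\ref{prop:wp-phi} the solution satisfies $w_{t_0} \in \mC^{3\kappa/2}$, which embeds into $H^{\ve}$ for $\ve \in (0,3\kappa/2)$, and by Lemma~\ref{lem:high-feq-ref} the high-frequency piece $w^{\mH}_{t_0}$ is bounded in a space at least as regular as $H^{\ve}$, so $w^{\mL}_{t_0} \in H^{\ve}$ too. Applying Lemma~\ref{lem:higher-reg} to the time-shifted problem on $[t_0, T^{\fin})$ then yields
\begin{equ}
   \sup_{t_0 \leqslant t < T^{\fin}} \|w^{\mL}_t\|_{H^{\ve}} \leqslant C(T^{\fin},M,\mathbf{N}^{\kappa}_{T^{\fin}})\;.
\end{equ}
Combining this with the uniform control of $w^{\mH}_t$ in a higher-regularity Besov space from Lemma~\ref{lem:high-feq-ref}, we conclude $\sup_{t_0 \leqslant t < T^{\fin}} \|w_t\|_{H^{\ve}} < \infty$.

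Finally, since on $\TT^2$ one has the Besov embedding $H^{\ve} \hookrightarrow \mC^{\ve - 1} \subseteq \mC^{-1+2\kappa}$ (for $\ve$ chosen appropriately in terms of $\kappa$), there exists $t_1 \in (t_0,T^{\fin})$ arbitrarily close to $T^{\fin}$ with $\|w_{t_1}\|_{\mC^{-1+2\kappa}}$ uniformly bounded. Proposition~\ref{prop:wp-phi} applied with $w_{t_1}$ as initial condition then provides a mild solution on $[t_1,t_1 + \tau)$ with $\tau = \tau(\|w_{t_1}\|_{\mC^{-1+2\kappa}}, \mathbf{L}^{\kappa}_{T^{\fin}+1}) > 0$ bounded below uniformly in $t_1$. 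Choosing $t_1$ so close to $T^{\fin}$ that $t_1 + \tau > T^{\fin}$ and invoking uniqueness in Theorem~\ref{thm:dpd} contradicts the maximality of $T^{\fin}$. The main obstacle is bookkeeping around the discrete jumps of $\lambda_t$ (both when iterating Corollary~\ref{cor:unifrm-bd} and when verifying that Lemma~\ref{lem:higher-reg} still applies across the finitely many regime changes), and verifying that the local existence time in Proposition~\ref{prop:wp-phi} depends on the initial condition only through a bounded norm on $[t_0,T^{\fin})$, so that the restart genuinely extends past $T^{\fin}$.
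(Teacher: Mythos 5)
Your proposal is correct and follows essentially the same route as the paper: boundedness of $\|w_t\|$ forces all but finitely many $T_i$ to equal $T^{\mathrm{fin}}$, Corollary~\ref{cor:unifrm-bd} together with Lemma~\ref{lem:higher-reg} (started from a positive time $t_0$ where Proposition~\ref{prop:wp-phi} supplies the needed $H^{\ve}$ regularity) gives a uniform $H^{\ve}$ bound on $w^{\mL}$ up to $T^{\mathrm{fin}}$, and Proposition~\ref{prop:wp-phi} then restarts the mild solution past $T^{\mathrm{fin}}$, contradicting maximality. The only cosmetic point is the embedding at the end: since $\ve < \kappa$, one should restart the local theory with a smaller H\"older parameter (e.g.\ $\kappa' = \ve/2$, which Proposition~\ref{prop:wp-phi} permits) rather than land in $\mC^{-1+2\kappa}$, exactly as the paper implicitly does.
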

   \begin{proof}
	It suffices to prove that $ T_{i} < T^{\mathrm{fin}} $ for all $ i \in
      \NN$.
      Note that under Assumption~\ref{assu:L2IC}, by
      Proposition~\ref{prop:wp-phi} for any $ \zeta < 1 - \kappa $ and $ 0 < t
      < T^{\mathrm{fin}} $ we have $ \| w^{\mL}_{t} \|_{H^{\zeta}} < \infty $.
      If by contradiction there exists an $ i_{\mathrm{fin}} \in \NN $ such that
      $ T_{i} = T^{\mathrm{fin}} $ for all $ i \geqslant i_{\mathrm{fin}}  $,
      then by Corollary~\ref{cor:unifrm-bd} and Lemma~\ref{lem:higher-reg} we
      would find a $ C( T^{\mathrm{fin}},
      \mathbf{N}^{\kappa}_{T^{\mathrm{fin}}}) >0 $ and an $ \ve> 0 $ such that
      \begin{equ}
         \sup_{T^{\mathrm{fin}}/2 \leqslant t < T^{\mathrm{fin}}} \| w_t^{\mL}
         \|_{H^{\ve}} \leqslant C (T^{\mathrm{fin}},
         \mathbf{N}^{\kappa}_{T^{\mathrm{fin}}}) \;,
      \end{equ}
      and as an application of Proposition~\ref{prop:wp-phi} we would be able
      to extend the domain of definition of the mild solution.
   \end{proof}

   \begin{proof}[of Theorem~\ref{thm:existence-uniqueness-global-solutions}]
      Suppose by contradiction that $ T^{\mathrm{fin}} < \infty $ so that, by
      Corollary~\ref{cor:explosion-condition}, $ T_{i} <
      T^{\mathrm{fin}} $ for every $ i \in \NN $. Since on the other hand,
      Lemma~\ref{lem:interval} implies that for $ \kappa >0 $
      sufficiently small
      \begin{equ}
         \sum_{i \in \NN } \big(T_{i + 1} - T_{i}\big) \geqslant \sum_{i \in
         \NN}\frac{1}{C (\mathbf{N}^{\kappa}_{T^{\mathrm{fin}}})(1 + \log{(1 +i)}
         )} \cdot \log{ \left( \frac{i^{2} + 2 i -
               C(\mathbf{N}^{\kappa}_{T^{\mathrm{fin}}})}{ i^{2}+
         C(\mathbf{N}^{\kappa}_{T^{\mathrm{fin}}})}  \right) } = \infty \;,
      \end{equ}
      our initial assumption must be false.
   \end{proof}

   \section{Global high-low weak solutions}
   In this section we prove Theorem~\ref{thm:weak}, regarding existence and
   uniqueness of  global
   weak solutions to \eqref{eqn:daPrato-Debussche} with initial datum $ u_{0} $ in $
   L^{2} $. 
   We start by introducing a suitable concept of weak solution to
   \eqref{eqn:daPrato-Debussche}.
   \begin{definition}[HL Solutions]\label{def:weak}
     For $ u_{0} \in L^{2} $ with $ \div(u_{0}) = 0 $ we say that a divergence-free process  $ v $ in
      $C([0, \infty); \mS^{\prime}
      (\TT^{2}; \RR^{2})) $ is a global high-low weak solution to
      \eqref{eqn:daPrato-Debussche} (HL solution for short) with initial condition $
      u_{0} $ if the following are satisfied by $ w = v - Y $, with $ Y
      $ given by \eqref{eqn:Y}:
      \begin{enumerate}
	\item For any $ T > 0 $ there exists a $ \lambda_{T} > 0 $ such that
	  for any $ \lambda \geqslant \lambda_{T} $ and $ t \in [0, T] $ the solution $ w $ is of the form $
	  w_{t} = w^{\mL, \lambda}_{t} + w^{\mH, \lambda}_{t} $, with $
w^{\mL, \lambda} $ and $ w^{\mH, \lambda} $ satisfying for 
            all $ \kappa \in (0, 1) $ 
            \begin{equs}
	      w^{\mL, \lambda} & \in L^{2}([0, T]; H^{1}) \cap L^{\infty}([0,
	      T]; L^{2}) \;, \\ 
		w^{\mH, \lambda} & \in L^{2}([0, T]; \mC^{1 - 2\kappa}_{4}) \cap
                L^{\infty} ([0, T]; L^{2})\;.
            \end{equs}
            In addition for $ Q_{t} $ as defined in \eqref{e:Q}, $
	    w^{\mH} $ is defined by 
	    \begin{equ}
	      w^{\mH, \lambda}_{t} = \div(w_{t} \para \mH_{\lambda} Q_{t})\;.
	    \end{equ}
         \item Equation \eqref{eqn:w} is satisfied by $ w $ in the weak sense.
            Namely, for any $ T > 0 $ and  $ \varphi \in C^{\infty}([0, T] \times
            \TT^{2}; \RR^{2}) $ satisfying $ \div(\varphi) = 0 $:
            \begin{equs}
               \langle w_{T}, \varphi_{T} \rangle &-  \langle w_{0},
               \varphi_{0} \rangle \\
               & \quad = \int_{0}^{T} \langle w,
               (\partial_{t} \varphi + \Delta \varphi)  \rangle + \langle 
	       \div(w^{\otimes 2} + D \sotimes w + Y^{\otimes
	       2} ), \varphi \rangle \ud s \;.
            \end{equs}
      \end{enumerate}
   \end{definition}
   Now we can establish existence of weak solutions.

   \begin{lemma}\label{lem:existence-weak}
      Let $ \mN^{\prime} $ be the null set of
      Lemma~\ref{lem:prob-space}. Then for any $ \omega \not\in \mN^{\prime}
      $ and $ u_{0} \in L^{2} $ with $
      \div(u_{0}) = 0 $ there exists an HL solution to
      \eqref{eqn:daPrato-Debussche} with initial condition $ u_{0} $.
   \end{lemma}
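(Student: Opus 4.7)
The plan is to approximate the critical initial datum $u_0 \in L^2$ by more regular data, construct mild solutions for each approximation via Theorem~\ref{thm:existence-uniqueness-global-solutions}, derive uniform energy estimates on arbitrary intervals $[0,T]$ using the technology of the previous section, and pass to the limit. Concretely, I would set $u_0^n = P_{1/n} u_0$, which lies in $C^\infty \subseteq \mC^{-1+\kappa}$, is divergence-free, has vanishing mean, and satisfies $u_0^n \to u_0$ in $L^2$ with $\|u_0^n\| \leqslant \|u_0\|$. Denote by $w^n$ the corresponding global mild solution to \eqref{eqn:w}.

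First I would extract uniform $L^2$ control on any fixed interval $[0,T]$. Since the index $i_0(u_0^n)$ from \eqref{e:i0} is uniformly bounded by $\lceil \|u_0\| \rceil + 1$, applying Lemma~\ref{lem:interval} to each $w^n$ yields a lower bound on $T_{i+1}^n - T_i^n$ that is uniform in $n$ (depending only on $\|u_0\|$ and $\mathbf{N}_T^\kappa$). Hence only finitely many of the stopping times lie in $[0,T]$ and their number is uniform in $n$, giving $\sup_n \sup_{0 \leqslant t \leqslant T} \|w^n_t\| \leqslant M(T, \|u_0\|, \mathbf{N}_T^\kappa)$. Choosing $\lambda_T$ large enough (depending on $M$) and fixing any $\lambda \geqslant \lambda_T$, I would set $w^{n,\mH,\lambda}_t = \mathbf{P}\div(w^n_t \para \mH_\lambda Q_t)$ and $w^{n,\mL,\lambda} = w^n - w^{n,\mH,\lambda}$; the estimate of Lemma~\ref{lem:high-feq-ref}, now applied with a fixed $\lambda$ exceeding $(1 + M)^{\mf{a}}$, ensures that $w^{n,\mH,\lambda}$ is bounded in $L^\infty(0,T; \mC^{1-2\kappa}_4)$ uniformly in $n$ by a constant depending on $\lambda_T$.

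The equation satisfied by $w^{n,\mL,\lambda}$ is identical in form to \eqref{e:wL} with $\lambda$ in place of $\lambda_t$, so rerunning the proof of Proposition~\ref{prop:nrg-estimate} with the bounds of Lemmas~\ref{lem:interp}--\ref{lem:bd-4} (all of which remain valid since $\lambda \geqslant \lambda_t$ in the relevant regime) and applying the Grönwall argument of Corollary~\ref{cor:unifrm-bd} yields uniform bounds
\begin{equs}
\sup_n \sup_{0 \leqslant t \leqslant T} \|w^{n,\mL,\lambda}_t\|^2 + \sup_n \int_0^T \|w^{n,\mL,\lambda}_s\|_{H^1}^2 \, ds \leqslant C(T, \|u_0\|, \mathbf{N}_T^\kappa, \lambda_T).
\end{equs}
From the weak formulation a corresponding bound on $\partial_t w^{n,\mL,\lambda}$ in $L^2(0,T; H^{-s})$ for suitable $s$ follows, and Aubin--Lions delivers a subsequence converging strongly in $L^2(0,T;L^2)$ and weak-$*$ in $L^\infty(0,T;L^2) \cap L^2(0,T;H^1)$ to some limit $w^{\mL,\lambda}$. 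The high-frequency component $w^{n,\mH,\lambda}$ converges along the same subsequence by continuity of the paraproduct in $w^n$, and the limit is $\mathbf{P}\div(w \para \mH_\lambda Q)$ with $w = w^{\mL,\lambda} + w^{\mH,\lambda}$.

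It remains to pass to the limit in the weak formulation. The linear terms are immediate; the quadratic term $w^{\otimes 2}$ is handled via the 2D Sobolev embedding $H^1 \hookrightarrow L^p$ combined with strong $L^2(0,T;L^2)$ convergence; and $Y^{\otimes 2}$ is deterministic in $n$. The singular term $D \sotimes w$ is split along the high-low decomposition: $D \sotimes w^{\mL,\lambda}$ is defined by $\mC^{-\kappa}$-$H^{1/2+\kappa}$ duality using strong convergence in an interpolated space, while $D \sotimes w^{\mH,\lambda}$ is defined via its paracontrolled structure, reducing to paraproduct estimates and the commutator $C^{\para}$ analysed in Section~\ref{sec:hl-freq}. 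I expect the main obstacle to be verifying that the energy estimate of Proposition~\ref{prop:nrg-estimate} adapts cleanly to a fixed frequency cutoff $\lambda$ rather than the dynamical $\lambda_t$; this requires a careful rereading of the proofs of Lemmas~\ref{lem:interp}--\ref{lem:bd-4} to ensure each bound involving $\lambda_t^{1/3}$ or $\lambda_t^{-\delta}$ is replaced by a harmless constant depending on $\lambda_T$, and to confirm that the renormalisation constant $\mf{r}_\lambda(t)$ stays bounded uniformly on the fixed scale.
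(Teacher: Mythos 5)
Your overall strategy \dash approximate, run the high\slash low energy machinery to get uniform bounds on $[0,T]$, extract a limit by Aubin--Lions, and pass to the limit in the weak formulation \dash is the same as the paper's, but your approximation scheme is genuinely different. The paper mollifies \emph{both} the data and the noise ($u_0^n=\mL_n u_0$, $X^n=\mL_n X$, $Y^n$, $D^n$), works with classical smooth solutions of the regularised PDE, and therefore has to introduce the approximate enhanced noises $\mathbf{N}^{n,\kappa}_t$ and control $\overline{\mathbf{N}}^\kappa_T=\sup_n\mathbf{N}^{n,\kappa}_T$, plus the convergence $D^n\sotimes w^n\to D\sotimes w$, $(Y^n)^{\otimes2}\to Y^{\otimes2}$. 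You mollify only the initial condition and invoke Theorem~\ref{thm:existence-uniqueness-global-solutions} for the smoothed data; since the noise is untouched, the constants in Proposition~\ref{prop:nrg-estimate}, Corollary~\ref{cor:unifrm-bd} and Lemma~\ref{lem:interval} (which the paper states for mild solutions under Assumption~\ref{assu:L2IC}, so your use of them is legitimate) are automatically uniform in $n$, and the stochastic input is fixed along the whole sequence. This buys a cleaner Step~1 at the price of leaning on the global well-posedness theorem rather than on classical theory for a smooth approximating equation; both are available at this point of the paper, so the route is admissible. Your observation that with a fixed $\lambda\geqslant(1+M)^{\mf a}$ the equation \eqref{e:wL} and the bounds of Lemmas~\ref{lem:interp}--\ref{lem:bd-4} go through with $\lambda_t$-dependent factors replaced by constants depending on $\lambda_T$ is exactly what the paper does to obtain \eqref{e:unif-bar-L}; one small point you should add is that $\lambda$ must be chosen in the countable set $\{\lambda^i\}$ (or $\mathbf{N}^\kappa$ only controls the renormalised resonant product along that sequence, cf.\ Remark~\ref{rem:a}).

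Two steps in your limit passage are under-justified, though both are fixable with tools already in the paper. First, you apply Aubin--Lions only to $w^{n,\mL,\lambda}$ and then claim $w^{n,\mH,\lambda}$ converges ``by continuity of the paraproduct in $w^n$''; but $w^{n,\mH,\lambda}=\mathbf{P}\div(w^n\para\mH_\lambda Q)$ depends on the full $w^n$, so you must first upgrade convergence of $w^{n,\mL,\lambda}$ to convergence of $w^n$. This follows from the contraction property of $w\mapsto\mathbf{P}\div(w\para\mH_\lambda Q)$ for $\lambda$ large (operator norm $\lesssim\lambda^{-(1-\kappa)}\mathbf{N}^\kappa_T$), the same inversion used in the uniqueness proof around \eqref{e:z11} \dash state it explicitly. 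Second, strong convergence in $L^2(0,T;L^2)$ alone does not let you pass to the limit in $D\sotimes w^n$, since pairing with $D\in\mC^{-\kappa}$ needs positive regularity; you must interpolate the strong $L^2_tL^2$ convergence against the uniform $L^2_tH^{1-\kappa}$-type bound (as in \eqref{e:l4}) to get strong convergence in $L^2(0,T;H^\beta)$ for some $\beta\in(\kappa,1-\kappa)$. The paper short-circuits both issues by applying Aubin--Lions directly to $w^n$, using the bound \eqref{e:dtwn} on $\partial_t w^n$ in $L^2([0,T];H^{-2-\kappa})$, which yields \eqref{e:strong} and handles the quadratic term, the $D\sotimes w^n$ term and the paraproduct limit in one stroke; you may want to adopt that variant rather than patching your two-step argument.
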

   
   \begin{proof}
      We construct a sequence of solutions to smooth
      approximations of \eqref{eqn:w} and prove a uniform energy estimate that
      guarantees compactness of the sequence. Let us define $
      u_{0}^{n} = \mL_{n} u_{0} $ and $ X^{n} = \mL_{n} X $ and $ Y^{n} $ the
      solution to \eqref{eqn:Y} with $ X $ replaced by $ X^{n} $. Then set $
      D^{n} = 2 (X^{n} + Y^{n}) $ and let $ w^{n} $ be the smooth
      solution to
      \begin{equ}[e:phi-n]
         \partial_{t} w^{n} = \Delta w^{n} +\mathbf{P} \div (
            (w^{n} )^{\otimes 2} + D \sotimes w^{n} + (Y^{n})^{\otimes 2})\;,
      \end{equ}
      with $ w^{n}_{0} = u_{0}^{n} $. Furthermore, we introduce the following
analogues of \( \mathbf{N}^{\kappa}_{t} \) and $
\mathbf{L}^{\kappa}_{t} $, cf.\ \eqref{e:def-N}:
\begin{equs}
  \mathbf{L}_{t}^{n, \kappa} & = 1+ \sup_{0 \leqslant s \leqslant t}
   \left\{ \| X_{s}^{n} \|_{\mC^{- \kappa}}+ \| Y_{s}^{n} \|_{\mC^{2 \kappa}}
\right\} \;,\\
   \mathbf{N}_{t}^{n, \kappa} & = \mathbf{L}_{t}^{n, \kappa} + \sup_{0 \leqslant s
   \leqslant t}   \sup_{i \in \NN} \Big\{\| (2 \nabla_{\mathrm{sym}
      } \mL_{\lambda^{i}}X_{s}^{n} ) \reso P^{\lambda^{i}, n}_{s} -
   \mf{r}_{\lambda^{i}}^{n} (s) \mathrm{Id} \|_{\mC^{- \kappa}} \Big\} \;,
\end{equs}
where we have defined for all $ \lambda \geqslant 1 $
   \begin{equs}
      P^{\lambda, n} (t, x) & = (-\Delta /2 + 1)^{-1} 2 \nabla_{\mathrm{sym}}
      \mL_{\lambda} X^{n} (t, x) \;, \\
	\mf{r}_{\lambda}^{n} (t) & = \frac{1}{4} \sum_{k \in \ZZ^{2}_{*}}
      \frac{\mf{l}(| k |/ \lambda) \mf{l}(| k |/ n)}{| k |^{2}/2 + 1}(1 - e^{- 2| k |^{2} t}) \;,
      \qquad \mf{r}_{\lambda}^{n} (t) \leqslant c \log{ (\lambda \wedge n)} \;.
   \end{equs}
With this definition we have that $ \lim_{n \to \infty}
   \mathbf{N}^{n , \kappa}_{t}  (\omega) = \mathbf{N}^{\kappa}_{t} (\omega) $
   for all $ \omega \not\in \mN^{\prime} $, the null set of
   Lemma~\ref{lem:prob-space}, so in particular
   \begin{equ}
      \overline{\mathbf{N}}_{t}^{\kappa} (\omega) \overset{\mathrm{def}}{=}
      \sup_{n \in \NN} \mathbf{N}_{t}^{n, \kappa} (\omega) < \infty \;, \qquad
      \forall \omega \not\in \mN^{\prime}\;.
   \end{equ}

      \textit{Step 1: A priori bound.} Our first objective is to show that for
      any $ T > 0 $ and $ \kappa > 0 $ sufficiently small there exists a $ C (T,
      \overline{\mathbf{N}}_{T}^{\kappa}) $ such that
      \begin{equ}[e:unif-approx]
         \sup_{n \in \NN} \bigg\{  \sup_{0 \leqslant t \leqslant T}  \|
            w_{t}^{n, \mL} \|^{2} + \int_{0}^{T} \|
         w_{t}^{n, \mL} \|_{H^{1}}^{2} \ud t  \bigg\}\leqslant C(T,
            \overline{\mathbf{N}}_{T}^{\kappa}) \;.
      \end{equ}
   To this aim, in analogy to Definition~\ref{def:stop-time-and-lambda} and
   \eqref{eqn:def-phi-high}, let us define $ T_{0}^{n} = 0 $ and
\begin{equ}
   T_{i+1}^{n} (\omega, u_{0}^{n}) = \inf \{ t \geqslant T_{i}^{n}  \; \colon
      \; \| w^{n}_{t} \| \geqslant i+1 \} \;,
\end{equ}
and set $\lambda_{t}^{n}
\eqdef (1 + \| w^{n}_{T_{i}}
\|)^{3}$ for all $T_{i} \leqslant t < T_{i+1}$. And finally,
for $ Q^{n} $ solving $ (\partial_{t} - \Delta) Q^{n} = 2 X^{n} $ with $
Q_{0}^{n} = 0$ and $ Q^{n, \mH}_{t} = \mH_{\lambda_{t}^{n}} Q^{n}_{t}$ we set
\begin{equ}
   w^{n, \mH} = \mathbf{P} \div ( w^{n} \para Q^{n, \mH})\;, \qquad
   w^{n, \mL} = w^{n} - w^{n , \mH} \;.
\end{equ}
Now we want can follow verbatim the proofs of Corollary~\ref{cor:unifrm-bd} and
Proposition~\ref{prop:nrg-estimate}, so that we obtain for any $ T, \kappa > 0 $
   \begin{equs}[e:wn-bd]
      \ &\sup_{T_{i}^{n} \leqslant t < T_{i+1}^{n}} \| w^{n, \mL}_{t} \|^{2} + 
      \int_{T_{i}^{n}}^{T_{i+1}^{n}} \|
      w^{n, \mL} \|_{H^{1}}^{2}\ud s \\
      & \leqslant \left( \| w^{n, \mL}_{T_{i}^{n}} \|^{2} +  C_{3} (
      \mathbf{N}^{n, \kappa}_{T_{i+1}^{n}}) \right) \cdot \exp \{
         (T_{i+1}^{n} - T_{i}^{n}) [C_{2} ( \mathbf{N}^{n, \kappa }_{T_{i
      +1}^{n}}) + C_{1} \log{(\lambda_{T_{i}^{n}} \wedge n )}] \}\;.
   \end{equs}

   In particular, since for fixed $ n \in \NN $ the solution $ w^{n} $ is
   smooth for all times, we can follow Lemma~\ref{lem:interval} to obtain that
   for some increasing $ C \colon \RR_{+} \to \RR_{+} $ and uniformly over $
   n, i \geqslant i_{0}(u_{0}) $ where $ i_{0} (u_{0})$ is as in \eqref{e:i0}:
   \begin{equ}
      T_{i+1}^{n} - T_{i}^{n} \geqslant \frac{1}{C
         ( \overline{\mathbf{N}}_{T_{i+1}^{n}})(1 + \log{(1 +i)}
      )} \cdot \log{ \left( \frac{i^{2} + 2 i -
      C( \overline{\mathbf{N}}_{T_{i+1}^{n}})}{ i^{2}+ C(
      \overline{\mathbf{N}}_{T_{i+1}^{n}})}  \right) } \;.
   \end{equ}
   Hence, from the divergence of the sequence $ \sum_{i \in \NN_{+}} (i
   \log{i})^{-1} = \infty $ we deduce that for every $ T>0, i \in \NN, i
   > i_{0} (u_{0}) $ there exists
   a time $ \mf{t}(i, \overline{\mathbf{N}}^{\kappa}_{T}) \in (0, T] $,
   satisfying \(\mf{t}(i) = T \) for all $ i $ sufficiently large, such that
   \begin{equ}[e:bd-T]
      \inf_{n \in \NN} T_{i}^{n} \geqslant \mf{t}(i,
      \overline{\mathbf{N}}^{\kappa}_{T})\;, \qquad \forall i> 
      i_{0} (u_{0}) \;.
   \end{equ}
   In addition we deduce that there exists a $ \overline{\lambda}_{T} > 0 $
   such that
   \begin{equ}[e:lbar]
     \lambda^{n}_{t} \leqslant \overline{\lambda}_{T} \;, \qquad \forall t \in
     [0, T] \;, n \in \NN  \;.
   \end{equ}
   Then from \eqref{e:wn-bd} and \eqref{e:bd-T} we can conclude that \eqref{e:unif-approx} holds
   true. 
   In fact, we can go one step further and use \eqref{e:lbar} to introduce the
   processes
   \begin{equ}
     w^{n, \mH, \lambda} = \div (w^{n} \para \mH_{\lambda} Q^{n})\;, \qquad
     w^{n, \mL, \lambda} = w^{n} - w^{n, \mH, \lambda} \;, \qquad \forall \lambda
     \geqslant \overline{\lambda}_{T} \;.
   \end{equ}
   Then following all the previous calculations we obtain that for any $ \lambda
   \geqslant \overline{\lambda}_{T} $
   \begin{equ}[e:unif-bar-L]
     \sup_{n \in \NN} \bigg\{  \sup_{0 \leqslant t \leqslant T}  \|
       w_{t}^{n, \mL, \lambda} \|^{2} + \int_{0}^{T} \|
     w_{t}^{n, \mL, \lambda} \|_{H^{1}}^{2}  \ud t  \bigg\}\leqslant C( \lambda, T,
     \overline{\mathbf{N}}^{\kappa}_{T}) \;.
   \end{equ}
   This leaves us roughly in the classical setting for solutions to the Navier--Stokes
   equations and we can follow, with a few modifications, \cite[Chapter 3, Theorem
   3.1]{temam2001navier}. 

   \textit{Step 2: More a-priori estimates.} Of course control on
   $ w^{n, \mL} $ alone is not sufficient, since we are interested in $
   w^{n} = w^{n, \mH} + w^{n, \mL} $, so let us now include the
   high-frequency term. We find by Lemma~\ref{lem:parap} that for any $
   \alpha < 1 - \kappa - 1/ \mf{a} $ and some $ C > 0 $
   \begin{equ}[e:wn-firstbd]
      \| w^{n}_{t} \|_{H^{\alpha}}  \leqslant \| w^{n, \mH}_{t}
      \|_{H^{\alpha}} + \| w^{n, \mL}_{t} \|_{H^{\alpha} }
       \leqslant C \mathbf{N}_{t}^{\kappa}  +  \| w^{n, \mL}_{t}
      \|_{H^{\alpha}} \;,
   \end{equ}
   where we have used Lemma~\ref{lem:high-feq-ref}. In particular, we obtain
   by \eqref{e:unif-approx}
   that for any $ \alpha < 1 - \kappa- 1 / \mf{a} $
   \begin{equ}[e:wn]
     \sup_{n \in \NN}\left\{  \sup_{0 \leqslant t \leqslant T} \|
       w^{n}_{t} \|^{2} + \int_{0}^{T} \| w^{n}_{t} \|_{H^{\alpha}}^{2} \ud t \right\}\leqslant C(T,
      \overline{\mathbf{N}}^{\kappa}_{T})\;.
   \end{equ}
   Via the bound $ \| w^{n}_{t} \|_{H^{1- \kappa}} \lesssim \|
w^{n}_{t} \| \| Q^{n, \mH}_{t} \|_{\mC^{2 - \kappa}} + \| w^{n, \mL}_{t} \|_{H^{1 - \kappa}} $ we
   can further improve our estimate to obtain by Sobolev embedding
   \begin{equ}[e:l4]
     \sup_{n \in \NN} \| w^{n} \|_{L^{2}([0, T]; \mC^{- \kappa}_{4}) \cap
     L^{2}([0, T]; H^{1- \kappa})} \leqslant C (T, \overline{\mathbf{N}}^{\kappa}_{T}) \;.
   \end{equ}
   Next, to obtain
   the high-low frequency decomposition for $ w $ we have to establish a bound
   on $ w^{n, \mH, \lambda} $. From \eqref{e:l4} we obtain, for
   every $ \lambda \geqslant 1 $
   \begin{equ}[e:unif-bar-H]
      \| w^{n, \mH, \lambda} \|_{L^{2}([0, T]; \mC^{1 - 2
     \kappa}_{4})} \leqslant C( T, \overline{\mathbf{N}}^{\kappa}_{T}) \;.
   \end{equ}
   Now we are ready to deduce the required convergence.

   \textit{Step 3: Convergence and conclusion.} In view of \eqref{e:wn} and  \eqref{e:l4} there
   exists a
   subsequence $ \{ n_{k} \}_{k \in \NN} $ we have for $ k \to \infty $
   and some $ w $:
   \begin{equ}[e:conv-2]
      w^{n_{k}} \overset{*}{\rightharpoonup} w \text{ in } L^{\infty}([0, T];
      L^{2}) \;, \qquad w^{n_{k}} \rightharpoonup w \text{ in } L^{2}([0, T];
      H^{1 - \kappa })\;.
   \end{equ}
   where the arrows $ \rightharpoonup $ and $ \overset{*}{\rightharpoonup} $
   indicate weak and weak-$ * $ convergence respectively.
   Now, weak convergence in $
   L^{2}(\TT^{2}) $ is not sufficient to deduce that the limiting process
   satisfies \eqref{eqn:w}. For this purpose we want to additionally establish
   the following  strong convergence, for any $ \beta < 1- \kappa $:
   \begin{equ}[e:strong]
      w^{n_{k}} \to w \;, \qquad \text {
      (strongly) in } L^{2}([0, T]; H^{\beta}) \;.
   \end{equ}
   To obtain this result, we would like to apply the Aubin--Lions lemma, so we bound via \eqref{e:phi-n} and for $
   \kappa > 0 $ sufficiently small 
   \begin{equs}
      \| \partial_{t} w^{n} \|_{H^{-2 - \kappa }} & \lesssim \| w^{n} \| + \| (w^{n}
      )^{\otimes 2} + D \sotimes w^{n} + (Y^{n})^{\otimes 2})\|_{H^{-1 - \kappa}}\\
      & \lesssim \| w^{n} \| + \| w^{n} \|^{2} + \| D \|_{\mC^{- \kappa} } \|
      w^{n} \|_{H^{1- \kappa}}+ (\overline{\mathbf{N}}_{T}^{\kappa})^{2} \\
      & \lesssim  \| w^{n} \|(1 + \| w^{n} \|) + \overline{\mathbf{N}}_{T}^{\kappa}  \|
      w^{n} \|_{H^{1- \kappa}}+ (\overline{\mathbf{N}}_{T}^{\kappa})^{2}\;,
   \end{equs}
   where we used the compact embedding $ (w^{n})^{\otimes 2} \in L^{1} \subseteq
   H^{- 1 - \kappa}  $ in dimension $ d =2 $ for the nonlinear term. We therefore conclude that
   \begin{equ}[e:dtwn]
      \sup_{n \in \NN} \| \partial_{t} w^{n} \|_{ L^{2} ([0, T]; H^{-2 -
      \kappa} )} \leqslant C(T, \overline{\mathbf{N}}_{T}^{\kappa}) \;,
   \end{equ}
   so that \eqref{e:strong} follows indeed from Aubin--Lions. 
   Now we can deduce that the limit $ w $ of the
   subsequence $ w^{n_{k}} $ is a weak solution to \eqref{eqn:w}. In fact by \eqref{e:strong}
   \begin{equ}
      (w^{n})^{\otimes 2} \to w^{\otimes 2}\;, \qquad  D^{n} \sotimes w^{n} \to D
      \sotimes w \;, \qquad (Y^{n})^{\otimes 2} \to Y^{\otimes 2}
   \end{equ}
   strongly in $ L^{2}([0, T]; L^{1}), L^{2}([0, T]; \mC^{- \kappa}_{2}) $ and $
   C([0, T]; \mC^{2 \kappa}) $ respectively. Finally, we have to establish the
   high-low frequency decomposition for $ w $. From the strong convergence in
   \eqref{e:strong} we obtain that as $ n \to \infty $
   \begin{equ}
      w^{n, \mH, \lambda} =  \div(w^{n} \para \mH_{\lambda}
      Q^{n}) \to \div(w \para \mH_{\lambda} Q) = w^{\mH, \lambda} \;.
   \end{equ}
   In addition \eqref{e:unif-bar-H} guarantees that $ w^{\mH, \lambda} $ has
   the required regularity $ L^{2}([0, T]; \mC^{1- 2 \kappa}_{4} ) $. That $
   w^{\mH, \lambda} $ lies in $ L^{\infty}([0, T]; L^{2}) $ follows from
   \eqref{e:wn-firstbd} with $ \alpha = 0 $.
   Similarly, from \eqref{e:unif-bar-L} we obtain that $ w^{\mL, \lambda} = w -
   w^{\mH, \lambda}$ lies in $ L^{\infty}([0, T]; L^{2}) \cap L^{2}([0, T];
   H^{1}) $, as required. This completes the proof.
  \end{proof}
   Next we show that HL solutions are unique.

   \begin{lemma}\label{lem:uniq-weak}
      Let $ \mN^{\prime} $ be the null set of
      Lemma~\ref{lem:prob-space}. Then for any $ \omega \not\in \mN^{\prime}
      $ and any initial condition $ u_{0} \in L^{2} $ with $ \div(u_{0})=0 $
      there exists at most one HL solution to
      \eqref{eqn:daPrato-Debussche} with initial condition $ u_{0} $ as in Definition~\ref{def:weak}.
   \end{lemma}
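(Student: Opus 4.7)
The plan is to adapt the energy estimate of Proposition~\ref{prop:nrg-estimate} to the \emph{difference} of two HL solutions. Fix $\omega\not\in\mN^\prime$ and $T>0$, and let $v^{(1)}, v^{(2)}$ be two HL solutions starting from $u_0$. Setting $w^{(i)} = v^{(i)}-Y$ and $\delta = w^{(1)}-w^{(2)}$, the difference satisfies weakly
\begin{equ}
\partial_t\delta = \Delta\delta + \mathbf{P}\div\bigl(\delta\sotimes(w^{(1)}+w^{(2)})+D\sotimes\delta\bigr)\;,\qquad \delta_0=0\;.
\end{equ}
By Definition~\ref{def:weak}(1) applied to both solutions, we can pick one $\lambda\geq\max(\lambda_T^{(1)},\lambda_T^{(2)})$ so that both $w^{(i)}$ admit their high-low decompositions at the same level $\lambda$. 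Then $\delta$ inherits the decomposition $\delta = \delta^{\mL,\lambda}+\delta^{\mH,\lambda}$ with $\delta^{\mH,\lambda} = \mathbf{P}\div(\delta\para \mH_\lambda Q)$.

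The first step is to show that the high-frequency remainder is small: combining Lemma~\ref{lem:parap} with the bound \eqref{e:QH-bd} gives, for some $\sigma>0$,
\begin{equ}
\|\delta^{\mH,\lambda}_t\|\;\lesssim\; \|\delta_t\|\,\lambda^{-\sigma}\,\mathbf{N}_T^\kappa\;,
\end{equ}
so that choosing $\lambda = \lambda(T,\mathbf{N}_T^\kappa)$ large enough forces $\|\delta_t\|\leq 2\|\delta^{\mL,\lambda}_t\|$ for all $t\in[0,T]$. Hence it suffices to show $\delta^{\mL,\lambda}\equiv 0$. Following the derivation of \eqref{e:wL} and the commutator identity \eqref{e:com}, the low-frequency difference satisfies
\begin{equs}
\partial_t\delta^{\mL,\lambda}=\Delta\delta^{\mL,\lambda}&+\mathbf{P}\div\bigl(\delta\sotimes(w^{(1)}+w^{(2)})+D\sotimes\delta -2(\mH_\lambda X)\rpara\delta\bigr)\\
&+\mathbf{P}\div\,C^\para(\delta,\mH_\lambda Q)\;.
\end{equs}

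Testing against $\delta^{\mL,\lambda}$ and splitting the Laplacian as in \eqref{e:lap-split} produces the quadratic form $\langle\delta^{\mL,\lambda},\mA_t^\lambda \delta^{\mL,\lambda}\rangle$, bounded thanks to Proposition~\ref{prop:def-A}, plus the renormalisation constant $\mf{r}_\lambda$ acting as a linear zero-order term. The remaining contributions are estimated exactly as in Lemmas~\ref{lem:interp}, \ref{lem:bd-rest}, \ref{lem:bd-4}; the only genuinely new term is the cubic coupling $\langle\delta^{\mL,\lambda},\div(\delta\sotimes w^{(i)})\rangle$, absent from the existence proof. For this one decomposes $\delta = \delta^{\mL,\lambda}+\delta^{\mH,\lambda}$, bounds the high-frequency piece using Lemma~\ref{lem:high-feq-ref}, and estimates the leading contribution by interpolation,
\begin{equ}
\bigl|\langle\delta^{\mL,\lambda},\div(\delta^{\mL,\lambda}\sotimes w^{(i)})\rangle\bigr|
\;\lesssim\; \|w^{(i)}\|_{L^4}\,\|\delta^{\mL,\lambda}\|\,\|\delta^{\mL,\lambda}\|_{H^1}\;,
\end{equ}
where $\|w^{(i)}\|_{L^4}\in L^2([0,T])$ by Sobolev embedding in dimension two together with $w^{(i),\mL,\lambda}\in L^2_tH^1_x$ and $w^{(i),\mH,\lambda}\in L^2_t\mC^{1-2\kappa}_4$ from Definition~\ref{def:weak}.

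Absorbing half of $\|\delta^{\mL,\lambda}\|_{H^1}^2$ on the left and collecting, we arrive at $\partial_t\|\delta^{\mL,\lambda}\|^2 \leq \Psi_t\,\|\delta^{\mL,\lambda}\|^2$ with $\Psi\in L^1([0,T])$ depending on $\lambda$, $\overline{\mathbf{N}}_T^\kappa$ and $\|w^{(i)}\|_{L^4}^2$. Since $\delta^{\mL,\lambda}_0 = 0$, Grönwall's inequality forces $\delta^{\mL,\lambda}\equiv 0$ on $[0,T]$, and arbitrariness of $T$ completes the argument. The main obstacle is Step~3: although we have pointwise smallness $\|\delta^{\mH,\lambda}\|\leq\tfrac12\|\delta\|$ from the fixed (non-dynamic) choice of $\lambda$, the cubic interaction with $w^{(i)}$ forces us to work with \emph{linear} Grönwall in $\|\delta^{\mL,\lambda}\|^2$, using the full $H^1$-coercivity of the Laplacian to absorb the gradient factor — a delicate balance because the paracontrolled structure only survives at level $\lambda$, so the $\lambda$-dependent constants must be tracked carefully to remain integrable in time.
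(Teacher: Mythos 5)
Your overall strategy is the paper's own: fix one sufficiently large frequency level $\lambda$ (no dynamic $\lambda_t$ is needed for uniqueness), use the paracontrolled high\slash low splitting of the difference $z=w^{(1)}-w^{(2)}$ at that level, show the high part is dominated by the low part, run an $L^2$ energy estimate on $z^{\mL,\lambda}$ and close with a Gr\"onwall-type argument whose coefficient is integrable in time thanks to the regularity built into Definition~\ref{def:weak} (the paper phrases the last step as a contraction on short intervals, which is equivalent). Your use of $\mA^\lambda_t$ and Proposition~\ref{prop:def-A} for the $\mL_\lambda X$ term is heavier than necessary but harmless: since $\lambda$ is fixed, the paper simply bounds $\|\mL_\lambda X\|_\infty\lesssim \lambda^{2\kappa}\mathbf{N}^\kappa_T$ and absorbs by Young.

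There is, however, an error in the one estimate you single out as the crux. The displayed bound
$\bigl|\langle\delta^{\mL,\lambda},\div(\delta^{\mL,\lambda}\sotimes w^{(i)})\rangle\bigr| \lesssim \|w^{(i)}\|_{L^4}\,\|\delta^{\mL,\lambda}\|\,\|\delta^{\mL,\lambda}\|_{H^1}$
is false: after integrating by parts you get $\|\delta^{\mL,\lambda}\|_{H^1}\|\delta^{\mL,\lambda}\|_{L^4}\|w^{(i)}\|_{L^4}$, and $\|\delta^{\mL,\lambda}\|_{L^4}$ cannot be replaced by $\|\delta^{\mL,\lambda}\|_{L^2}$; Ladyzhenskaya gives instead
$\lesssim \|w^{(i)}\|_{L^4}\,\|\delta^{\mL,\lambda}\|^{1/2}\,\|\delta^{\mL,\lambda}\|_{H^1}^{3/2}$,
so Young's inequality (exponents $4/3$ and $4$) leaves you with $\|w^{(i)}\|_{L^4}^{4}\|\delta^{\mL,\lambda}\|^{2}$, i.e.\ you need $w^{(i)}\in L^4([0,T];L^4)$, not merely $\|w^{(i)}\|_{L^4}\in L^2_t$ as you argue. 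This is repairable within your scheme: $w^{\mL,\lambda}\in L^\infty_tL^2\cap L^2_tH^1\subset L^4_tL^4$ and $w^{\mH,\lambda}\in L^\infty_tL^2\cap L^2_t\mC^{1-2\kappa}_4\subset L^4_tL^4$, and this quartic balancing is precisely what the paper carries out in its Step~4 via Gagliardo--Nirenberg and the norm $\opnorm{\cdot}_\lambda$. A second point you gloss over: the commutator $C^{\para}(z,\mH_\lambda Q)$ is \emph{not} estimated ``exactly as in Lemma~\ref{lem:bd-4}'', because $(\partial_t-\Delta)z$ is given by the difference equation and thus contains $\div(z\sotimes(w^{(1)}+w^{(2)}))$; its estimate (paper, Step~3) produces norms like $\|z\|_{H^{3\kappa}}$, which must then be traded for $\|z^{\mL,\lambda}\|$-quantities. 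For that you need the domination of the high part by the low part in $H^\alpha$ for $\alpha$ up to $1-2\kappa$ (the paper's bounds \eqref{e:z11}--\eqref{e:z22}), not only in $L^2$ as you state; the mechanism is the same (choose $\lambda$ large), but it must be invoked at those regularities and the resulting quadratic-in-$(w^{(1)},w^{(2)})$ factors again handled by interpolation and Young so that the Gr\"onwall coefficient stays in $L^1_t$.
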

   \begin{proof}
      Consider two HL solutions $ v = w + Y, \overline{v} = \overline{w} + Y $
      to \eqref{eqn:daPrato-Debussche} and write
     $ z = w - \overline{w} $ and for any $ \lambda \geqslant
     \lambda_{T} $ define $ z^{\mL, \lambda} = w^{\mL, \lambda} -
     \overline{w}^{\mL, \lambda} , z^{\mH, \lambda}= z - z^{\mL, \lambda}$.
     Since in the first few steps we do not care about the choice of $ \lambda
     $, we omit it from the notation (meaning that we write $ z^{\mL},
     z^{\mH} $ in place of $ z^{\mL, \lambda} $ and $ z^{\mH, \lambda} $), up to the last step.
     We can compute via \eqref{e:wL}:
\minilab{e:fourTerms3}
     \begin{equs}
       \partial_{t} \frac{1}{2} \| z^{\mL} \|^{2} &=  \langle z^{\mL}, \Delta
	z^{\mL} +  \div ( 2 ( \mL_{\lambda} X ) \sotimes z^{\mL}) \rangle
	\label{e:z1} \\
	&\quad + \langle z^{\mL},  \div ( 2 (\mH_{\lambda} X) \sotimes z^{\mL} -
	2 (\mH_{\lambda} X ) \rpara z^{\mL} )\rangle \label{e:z2} \\
	&\quad + \langle z^{\mL},  \div ( 2 X \sotimes z^{\mH} - 2 (\mH_{\lambda}
	X ) \rpara z^{\mH}  + C^{\para}(z, \mH_{\lambda}
	Q))\rangle \label{e:z3} \\
	&\quad + \langle z^{\mL},   \div (w^{\otimes 2} - \overline{w}^{\otimes 2} +
	2 Y \sotimes z )\rangle \label{e:z4}\;.
      \end{equs}
      Let us note that the equality we have written has to be justified, but
      its proof follows from the regularity assumptions in
      Definition~\ref{def:weak} along the same estimate we will use below to
      obtain uniqueness. Also, recall that contrary to many previous calculations, $
      \lambda $ is a fixed and arbitrary large parameter. As usual we
      proceed one term at a time.

      \textit{Step 1: \eqref{e:z1}.} We estimate via
      Lemma~\ref{lem:low-freq}
      \begin{equs}
	\langle z^{\mL}, \Delta z^{\mL} +  \div ( 2 ( \mL_{\lambda} X ) \sotimes
	z^{\mL}) \rangle & \leqslant - \| z^{\mL} \|_{H^{1}} + C \|
	\mL_{\lambda} X \|_{\infty} \| z^{\mL} \| \| z^{\mL} \|_{H^{1}} \\
  & \leqslant - \| z^{\mL} \|_{H^{1}} + C \lambda^{2 \kappa}
  \mathbf{N}^{\kappa}_{T} \| z^{\mL} \| \| z^{\mL} \|_{H^{1}}\;.
  \label{e:step1}
      \end{equs}

      \textit{Step 2: \eqref{e:z2}.} Here we obtain
for any $ \delta \in (0, 1) $
      \begin{equs}
	\langle z^{\mL},  \div ( 2 & (\mH_{\lambda} X) \sotimes z^{\mL} -
	2 (\mH_{\lambda} X ) \rpara z^{\mL} )\rangle \\
	&=  \langle z^{\mL},  \div ( 2 (\mH_{\lambda} X) \reso z^{\mL} + 2
	(\mH_{\lambda} X ) \para z^{\mL} )\rangle \\
	&\leqslant C  \| z^{\mL} \|_{H^{1}} \| (\mH_{\lambda} X) \reso z^{\mL} +
	(\mH_{\lambda} X ) \para z^{\mL} \|\\
	&\leqslant C  \| z^{\mL} \|_{H^{1}} \mathbf{N}_{T}^{\kappa} \|
	z^{\mL} \|_{H^{2 \kappa}} \\
	&\leqslant \delta  \| z^{\mL} \|_{H^{1}}^{2} + C(\delta ,
  \mathbf{N}_{T}^{\kappa}) \| z^{\mL} \|^{2}\;, \label{e:step2}
      \end{equs}
      where the last inequality follows by interpolation $ \|
      z^{\mL} \|_{H^{2 \kappa}} \lesssim \| z^{\mL} \|^{1 - 2 \kappa} \|
      z^{\mL} \|^{2 \kappa}_{H^{1}} $ and by
      Young's inequality for products with $ p = 2/(1+ 2\kappa), q =
      2/(1 - 2 \kappa) $.

      \textit{Step 3: \eqref{e:z3}.} Here we estimate
      \begin{equs}
	\langle z^{\mL},   \div &  ( 2 X \sotimes z^{\mH} - 2 (\mH_{\lambda}
	X ) \rpara z^{\mH}  + C^{\para}(z, \mH_{\lambda}
	Q))\rangle \\
	& \lesssim  \| z^{\mL} \|_{H^{1}} (\| X \sotimes z^{\mH} -  (\mH_{\lambda}
	X ) \rpara z^{\mH} \| + \|C^{\para}(z, \mH_{\lambda} Q)\|) \\
	& \lesssim  \| z^{\mL} \|_{H^{1}}( \mathbf{N}^{\kappa}_{T} + \|
	  \mL_{\lambda} X\|_{\infty} )\| z^{\mH} \|_{H^{2 \kappa}} + \|
	C^{\para}(z, \mH_{\lambda}Q) \| ) \;.
      \end{equs}
      For the commutator $ C^{\para} $ we then proceed similarly to the proof
      of Lemma~\ref{lem:bd-4}, namely from \eqref{e:com}
      \begin{equ}
	\| C^{\para}(z, \mH_{\lambda} Q) \| \leqslant \| \mathrm{Tr}( \nabla z
	\para \nabla \mH_{\lambda}Q)\| +\| ( (\partial_{t} - \Delta) z) \para \mH_{\lambda}
	Q \|\;.
      \end{equ}
      For the first term we have, since $ Q \in \mC^{2 - \kappa} $:
      \begin{equ}
	\| \mathrm{Tr}( \nabla z \para \nabla \mH_{\lambda}Q)\| \lesssim \|
	\nabla z \|_{H^{-1 + 2 \kappa}} \| \nabla \mH Q \|_{\mC^{1 - \kappa}}
	\lesssim \mathbf{N}_{T}^{\kappa} \| z \|_{H^{2 \kappa}} \;.
      \end{equ}
      For the second term we have
      \begin{equ}
	\| ( (\partial_{t} - \Delta) z) \para \mH_{\lambda} Q \| \leqslant \|
	( \partial_{t} - \Delta) z \|_{H^{-2 + 2 \kappa}} \| Q
	\|_{\mC^{2 - \kappa}}\;,
      \end{equ}
      and from \eqref{eqn:w} we obtain that $ z $ solves
      \begin{equ}
	(\partial_{t} - \Delta)z = \mathbf{P} \div ( z \sotimes (w +
	\overline{w}) + D \sotimes z ) \;,
      \end{equ}
      so that
      \begin{equ}
	\| (\partial_{t} - \Delta)z \|_{H^{-2 + 2 \kappa}} \lesssim \| z
	\sotimes (w + \overline{w}) \|_{H^{-1 + 2 \kappa}} + \| D \sotimes z \|
	_{H^{-1 + 2 \kappa}}\;.
      \end{equ}
      Then, for the first quantity we have by Sobolev embedding that
      \begin{equ}
	\| z \sotimes (w + \overline{w}) \|_{H^{-1 + 2 \kappa}} \lesssim \|
	z \sotimes (w + \overline{w}) \|_{\mC_{1}^{3 \kappa}} \lesssim \| z
	\|_{H^{3 \kappa}} \| w + \overline{w} \|_{H^{3 \kappa}}\;,
      \end{equ}
      and for the second quantity
	$\| D \sotimes Z \|_{H^{-1 + 2 \kappa}} \lesssim \| D \|_{\mC^{- \kappa}}
	\| z \|_{H^{2 \kappa}}$.
	Hence in total for the commutator
      \begin{equ}
	\| C^{\para} (z, \mH_{\lambda}Q) \| \lesssim
	\mathbf{N}^{\kappa}_{T} \| z \|_{H^{3 \kappa} } (1 + \| w +
	\overline{w} \|_{H^{3 \kappa}}) \;.
      \end{equ}
      So overall, via Lemmma~\ref{lem:high-feq-ref} and by Young's inequality
      for products we can conclude that for any $ \delta \in (0, 1) $
      \begin{equs}
         \langle z^{\mL}&,   \div   ( 2 X \sotimes z^{\mH} - 2 (\mH_{\lambda} X
         ) \rpara z^{\mH}  + C^{\para}(z, \mH_{\lambda} Q))\rangle  \label{e:step3} \\
         & \leqslant C \| z^{\mL} \|_{H^{1}} \{ ( \| z^{\mH} \|_{H^{3 \kappa}} +
         \| z \|_{H^{3 \kappa} } )\mathbf{N}^{\kappa}_{T} (1 + \lambda^{ 2
         \kappa} + \| w + \overline{w} \|_{H^{3 \kappa}}) \} \\
         & \leqslant  \delta \| z^{\mL} \|^{2}_{H^{1}} + C(\delta,
         \mathbf{N}^{\kappa}_{T}, \lambda )  \{ ( \| z^{\mH} \|_{H^{3 \kappa}} +
         \| z \|_{H^{3 \kappa} } ) (1  + \| w +
      \overline{w} \|_{H^{3 \kappa}}) \}^{2}.
      \end{equs}

      \textit{Step 4: \eqref{e:z4}.} Here we estimate
      \begin{equ}
	\langle z^{\mL},   \div (w^{\otimes 2} - \overline{w}^{\otimes 2} +
	2 Y \sotimes z )\rangle \lesssim \| z^{\mL} \|_{H^{1}}( \| z \sotimes
	(w + \overline{w}) \| + \mathbf{N}_{T}^{\kappa} \| z \| )\;.
      \end{equ}
      Regarding the term involving $ z \sotimes (w + \overline{w}) $, we
      decompose it as
      \begin{equs}
	z \sotimes (w + \overline{w}) &=  z^{\mL} \sotimes (w^{\mL} +
	\overline{w}^{\mL}) + z^{\mL} \sotimes (w^{\mH} +
	\overline{w}^{\mH}) \\
	&\quad +z^{\mH} \sotimes (w^{\mL} +
	\overline{w}^{\mL}) +  z^{\mH} \sotimes (w^{\mH} +
	\overline{w}^{\mH})\;,
      \end{equs}
      in order to use the different regularity and integrability bounds on the
      high and low frequency terms.
      For the low frequency term we use
      Gagliardo--Nirenberg to bound
      \begin{equs}
         \| z^{\mL} \sotimes (w^{\mL} + \overline{w}^{\mL}) \| & \lesssim \|
         z^{\mL} \|_{L^{4}} \| w^{\mL} + \overline{w}^{\mL} \|_{L^{4}} \\
         & \lesssim
         \| z^{\mL} \|^{\frac{1}{2}} \| z^{\mL} \|^{\frac{1}{2}}_{H^{1}} \|
         w^{\mL} + \overline{w}^{\mL} \|^{\frac{1}{2}} \| w^{\mL} +
         \overline{w}^{\mL} \|^{\frac{1}{2}}_{H^{1}}\;.
      \end{equs}
      For the cross term we bound via the Riesz--Thorin interpolation theorem
      \begin{equs}
         \| z^{\mL} \sotimes (w^{\mH} + \overline{w}^{\mH}) \| & \lesssim \|
         z^{\mL}\|_{L^{4}} \| w^{\mH} + \overline{w}^{\mH} \|_{L^{4}} \\
         & \lesssim \| z^{\mL} \|^{\frac{1}{2}} \| z^{\mL}
         \|_{H^{1}}^{\frac{1}{2}} \| w + \overline{w} \|^{\frac{1}{2}} \| w +
         \overline{w} \|_{\infty}^{\frac{1}{2}} \;,
      \end{equs}
      and in addition by Besov embeddings we find that (provided $ \kappa $ is
      sufficiently small), as we are in dimension $ d=2 $ 
      \begin{equ}
         \| w + \overline{w} \|_{\infty} \lesssim \| w + \overline{w}
         \|_{\mC^{1 - 3 \kappa}_{4}} \;.
      \end{equ}
      We can use similar bounds on all the remaining terms to eventually obtain
      \begin{equ}
         \| z \sotimes (w + \overline{w}) \| \lesssim \| z
         \|^{\frac{1}{2}}  \opnorm{z}^{\frac{1}{2}}_{\lambda} (\| w \| + \| \overline{w}
         \|)^{\frac{1}{2}} (\opnorm{w}_{\lambda} + \opnorm{
         \overline{w}}_{\lambda} )^{\frac{1}{2}}
         \;,
      \end{equ}
      where for a function $ \varphi $, which for any $ \lambda \geqslant
\lambda_{T} $
      can be decomposed as $ \varphi =
      \varphi^{\mL, \lambda} + \varphi^{\mH, \lambda} $ we have defined
      \begin{equ}[e:nnorm]
         \opnorm{\varphi}_{\lambda} = \| \varphi^{\mL, \lambda} \|_{H^{1}} + \|
         \varphi^{\mH, \lambda } \|_{\mC^{1 - 3 \kappa}_{4}} \;.
      \end{equ}
      Then by Young's inequality for products with conjugate exponents $ p = 4/3, q = 4 $ we can
      bound for any $ \delta \in (0, 1) $ and a suitable $ C(\delta ) > 0 $:
      \begin{equs}
	\| z^{\mL}&  \|_{H^{1}}( \| z \sotimes
	(w + \overline{w}) \| + \mathbf{N}_{T}^{\kappa} \| z \| )\\
	&\leqslant \delta  
  \opnorm{z}^{2}_{\lambda} + C(\delta, \mathbf{N}_{T}^{\kappa}) \| z \|^{2}
  (1 + (\| w \|+ \|\overline{w} \|)^{2} (\opnorm{ w}_{\lambda} +
  \opnorm{\overline{w}}_{\lambda})^{2}) \;. \label{e:step4}
      \end{equs}

      \textit{Step 5: Conclusion.} Before we put together all our estimates,
      let us observe that in Steps $ 3 $ and $ 4 $ our bounds depend
      on the norms of $ z $ and $ z^{\mH, \lambda } $ and not just $ z^{\mL,
      \lambda} $. Here to
      obtain uniqueness we will make use of our freedom of choice for $ \lambda
      $. Let us start by considering the $ H^{\alpha} $ norm of $  z  $ for $
      0 \leqslant  \alpha < 1 - \kappa  $. By Lemma~\ref{lem:high-freq-reg-loss} we obtain:
      \begin{equs}
         \| z \|_{H^{\alpha}} & \leqslant \| z^{\mL, \lambda} \|_{H^{\alpha}} +
         \| z^{\mH, \lambda} \|_{H^{\alpha}} = \|
         z^{\mL, \lambda} \|_{H^{\alpha}} + \| \div ( z \para \mH_{\lambda} Q )
         \|_{H^{\alpha}} \\
         & \leqslant \| z^{\mL, \lambda} \|_{H^{\alpha}}  + C\| z
         \|_{H^{\alpha}} \| \mH_{\lambda} Q \|
         _{\mC^{\alpha + 1}} \\
         & \leqslant \| z^{\mL, \lambda} \|_{H^{\alpha}} + C
         \| z \|_{H^{\alpha}} \lambda^{-(1 - \kappa - \alpha)} \| Q \|_{\mC^{2 - \kappa}} \;.
      \end{equs}
      In particular, if we choose $ \overline{\lambda}(\alpha, \kappa, T)\geqslant 1 $ sufficiently large,
      so that
      \begin{equ}
         C \mathbf{N}^{\kappa}_{T} \leqslant \frac{1}{2} \{  \overline{\lambda}
         (\alpha, \kappa, T) \}^{1 - \kappa - \alpha}\;,
      \end{equ}
      we obtain for all $ \lambda \geqslant \overline{\lambda}(\alpha, \kappa,
      T) \vee \lambda_{T} $:
      \begin{equs}[e:z11]
         \| z \|_{H^{\alpha}} & \leqslant 2 \| z^{\mL, \lambda} \|_{H^{\alpha}}
         \;, \\
         \| z^{\mH, \lambda} \|_{H^{\alpha}}
         & \leqslant \| z \|_{H^{\alpha}}  + \| z^{\mL, \lambda } \|_{H^{\alpha}} \leqslant
         3 \| z^{\mL, \lambda} \|_{H^{\alpha}} \;.
      \end{equs} 
      Then choose $ \alpha = 1 - 2 \kappa $, so that from the
      Besov embedding $ H^{\alpha} \subseteq B^{\alpha -1}_{4, \infty} $,  for all $
      \lambda \geqslant \overline{\lambda} (1 -2 \kappa, \kappa, T) \vee
\lambda_{T}   $ we obtain
      \begin{equ}[e:z22]
         \| z^{\mH, \lambda} \|_{\mC^{1 - 3 \kappa}_{4}} \lesssim \| z^{\mL,
         \lambda } \|_{H^{1 - 2 \kappa}} \;.
      \end{equ}
      We deduce that for $ \opnorm{\cdot}_{\lambda} $ defined by
      \eqref{e:nnorm} we have $ \opnorm{z}_{\lambda} \lesssim \| z^{\mL, \lambda}
      \|_{H^{1}} $.

      We are now ready to collect the bounds from the
      previous steps: \eqref{e:step1}, \eqref{e:step2}, \eqref{e:step3} and
      \eqref{e:step4} (for sufficiently small $ \delta \in (0, 1) $), in
      combination with \eqref{e:z11} and \eqref{e:z22}. We find
      that for any $ \lambda \geqslant \overline{\lambda} (1 - 2\kappa, \kappa,
      T ) \vee \lambda_{T} $, choosing $ \delta > 0 $ sufficiently small in the bounds above
      \begin{equs}
         \partial_{t} \frac{1}{2} \| z^{\mL, \lambda} \|^{2} \leqslant & -
         \frac{3}{4}  \| z \|_{H^{1}}^{2} \\
         & + C( \mathbf{N}^{\kappa}_{T}, \lambda) (\| z^{\mL, \lambda } \| + \|
         z^{\mL, \lambda} \|_{H^{3 \kappa}} )^{2} (1 + \|
         w \|_{H^{3 \kappa}} + \| \overline{w} \|_{H^{3 \kappa}})^{2} \\
         & + C ( \mathbf{N}^{\kappa}_{T}) \| z^{\mL, \lambda} \|^{2} (1 + (\| w \|+
         \|\overline{w} \|)^{2} (\opnorm{ w}_{\lambda} +
      \opnorm{\overline{w}}_{\lambda})^{2})\;.
      \end{equs}
      As for the $ H^{3 \kappa} $ norm of $ z^{\mL, \lambda} $, by interpolation $ \|
      z^{\mL, \lambda } \|_{H^{3 \kappa}} \lesssim \| z^{\mL, \lambda} \|^{1 - 3 \kappa} \|
      z^{\mL, \lambda} \|^{3 \kappa}_{H^{1} }$. Hence by Young's inequality for products
      with conjugate exponents $ p = \frac{2}{6 \kappa}, q = \frac{2}{2 - 6
      \kappa}$ (as usual this is well-defined only for $ \kappa $ small) we
obtain for any $ A > 0, \delta \in (0,1)  $
      \begin{equ}
         \| z^{\mL, \lambda} \|^{2}_{H^{3 \kappa}} A  \leqslant  \delta \| z^{\mL,
         \lambda} \|_{H^{1}}^{2} +
         C(\delta) A^{\frac{q}{2}} \| z \|^{2}
         = \delta \| z^{\mL, \lambda} \|_{H^{1}}^{2} + C(\delta) A^{\frac{q}{2}} \| z
         \|^{2}\;.
      \end{equ}
      Of course, we want to apply this inequality with $ A^{\frac{1}{2}
      } = 1 + \| w \|_{H^{3 \kappa}} + \| \overline{w} \|_{H^{3 \kappa}} $, and
      for the last two terms we can apply the same line of inequalities to
      obtain, for $ \vt(\kappa) = \frac{4 \kappa}{1 - 4\kappa} $ 
      \begin{equ}
         \| w \|^{q}_{H^{3 \kappa}} \lesssim \| w \|^{q (1 - \vt(\kappa) )} \|
         w \|^{q \vt(\kappa)}_{H^{1 - 3\kappa}} = \| w \|^{\frac{1}{(1 -
         3\kappa)^{2}}}
         \| w \|^{\frac{3 \kappa}{(1 - 3 \kappa)^{2}}}_{H^{1 - 3\kappa} } \;.
      \end{equ}
      Hence, since for $ \kappa $ small $ \frac{3 \kappa}{(1 - 3
      \kappa)^{2}} \leqslant 2 $, we can further simplify our estimate to
      obtain
      \begin{equ}
         \partial_{t} \frac{1}{2} \| z^{\mL, \lambda} \|^{2} \leqslant  -
         \frac{1}{2} \| z \|_{H^{1}}^{2}
         + C( \mathbf{N}^{\kappa}_{T}, \lambda, M_{T}) \| z^{\mL, \lambda}
          \|^{2}  (1 + (\opnorm{ w}_{\lambda} +
          \opnorm{\overline{w}}_{\lambda})^{2})\;,
      \end{equ}
      where we have additionally defined $ M_{T} = \| w
      \|_{L^{\infty}([0, T]; L^{2})} + \| \overline{w}
      \|_{L^{\infty}([0, T]; L^{2})}$, which is finite from the definition of
      HL solutions. We can now deduce that for any $ \mf{t} \in (0, T] $
      \begin{equs}[e:finalZ]
         \sup_{0 \leqslant s \leqslant \mf{t}} & \| z^{\mL, \lambda}_{s} \|^{2} \\
         & \leqslant
         \Big( \sup_{0 \leqslant s \leqslant \mf{t}} \| z^{\mL, \lambda}_{s}
         \|^{2} \Big) C( \mathbf{N}_{T}^{\kappa}, \lambda, M_{T})
         \int_{0}^{\mf{t}} 1 + \opnorm{w_{s}}^{2}_{\lambda} + \opnorm{ \overline{w}_{s}
         }^{2}_{\lambda}\ud s \;.
      \end{equs}
      From the regularity assumptions on HL solutions in
      Definition~\ref{def:weak} we know that $ \int_{0}^{T}1 +
      \opnorm{w_{s}}^{2}_{\lambda} + \opnorm{w_{s}}^{2}_{\lambda} \ud s < \infty $. In particular
      by dominated convergence
      \begin{equ}
         \, [0, T] \ni \mf{t} \mapsto g( \mf{t} ) \eqdef
         \int_{0}^{\mf{t}} 1 + \opnorm{w_{s}}^{2}_{\lambda} + \opnorm{
         \overline{w}_{s}}^{2}_{\lambda}\ud s
      \end{equ}
      is a continuous map (and hence equicontinuous on the compact interval $
      [0, T] $).
      From \eqref{e:finalZ} we can conclude, by a contraction argument, that $ z = 0 $ on $
      [0, \mf{t}] $, for $ \mf{t} $ sufficiently small depending only on the
      modulus of continuity of $ g $. Hence we obtain also $ z_{\mf{t}} = 0 $
      for all $ \mf{t} > 0$ by iterating the argument and the proof is complete.
   \end{proof}

\section{The symmetrised operator}\label{sec:sym-op}
This section is devoted to the construction of the time-dependent operator $ t
\mapsto \mA_{t}  $ as in \eqref{eqn:def-A} and its approximations $
(\mA^{\lambda}_{t})_{\lambda \geqslant 1} $ as in \eqref{eqn:def-A-l}. The
construction is overall analogous to the construction of the 2D Anderson
Hamiltonian by Allez and Chouk \cite{AllezChouk}, although presently we are treating a vector-valued and
time-dependent case. 
The fundamental step is the construction of a
continuous map between the space $ \Xi $ of enhanced noises and the space $
\mathbf{C}_{\mathrm{op}} $ of closed self-adjoint operators with the graph
distance \cite[IV.2.4]{Kato1995} (convergence in this distance is implied by
convergence in the resolvent sense, which is the only one we will use here). We
define the space of \textit{enhanced noises}
   $\Xi_{\kappa} \subseteq  \mC^{-1 - \kappa}(\TT^{2}; \mathbf{M}^{2}) \times
   \mC^{ - \kappa}(\TT^{2}; \mathbf{M}^{2})$ by
          \begin{equ}[e:modelled]
             \Xi_{\kappa} = \overline{\{ ( \mathbf{X}_{1},
                      \mathbf{X}_{1}  \reso
                   (- \Delta /2 + 1)^{-1} \mathbf{X}_{1}- c)  \;
                   \colon \; \mathbf{X}_{ 1}  \in \mS (\TT^{2} ;
             \mathbf{M}^{2}) \;, \  c \in \RR  \}} \;,
         \end{equ}
   where the closure is taken with respect to the $ \mC^{-1 - \kappa}(\TT^{2}; \mathbf{M}^{2}) \times
   \mC^{ - \kappa}(\TT^{2}; \mathbf{M}^{2})$ product norm. We refer to these as
\emph{enhanced} noises because our purpose is to define the operator
\begin{equ}
\mf{A} = \frac{1}{2} \Delta + \mathbf{X}_{1} \;,
\end{equ}
but if $ \mathbf{X}_{1} \in \mC^{-1 - \kappa} $ for some $ \kappa > 0 $, there
is no canonical definition of such an operator and some additional information
(in terms of functionals of $ \mathbf{X}_{1} $) is required. This is because, for
generic $ \mathbf{X}_{1} \in \mC^{-1 - \kappa} $, the product $\mathbf{X}_{1}  \reso
                   (- \Delta /2 + 1)^{-1} \mathbf{X}_{1} $ is not well defined,
cf.\ Lemma~\ref{lem:parap}.
Eventually, we will associate to each element in the space $ \mX_{\kappa} $ a
closed operator, which will have as domain the space of
so-called strongly paracontrolled functions, which embeds into the following
space (with slightly simpler structure), for any $ \mathbf{X} =
(\mathbf{X}_{1}, \mathbf{X}_{2}) \in \Xi_{\kappa}$, for some \(\kappa>0\):
\begin{equs}
   \mX_{\kappa} (\mathbf{X}) & = \{ \varphi \in L^{2}  \; \colon \; \varphi
      = \varphi \para P + \varphi^{\sharp} \;, \quad \varphi \in
   H^{1 - \kappa} \;, \quad \varphi^{\sharp} \in H^{2 - 2\kappa}  \}\;, \\
   P & = (-\Delta/2 + 1)^{-1} \mathbf{X}_{1} \;, \label{e:P}
\end{equs}
with the associated norm
\begin{equ}
   \| \varphi \|_{\mX_{\kappa}} = \| \varphi \|_{H^{1 - \kappa}} + \| \varphi -
   \varphi \para P \|_{H^{2 - 2 \kappa}} \;.
\end{equ}
Then let us recall the following result concerning singular Hamiltonians.

\begin{proposition}[Allez--Chouk \cite{AllezChouk}]\label{prop:def-A}
   There exists a $ \kappa_{0} > 0 $ and a unique map  $ \mf{A} \colon \Xi  \to
   \mathbf{C}_{\mathrm{op}} $, where
   \begin{equ}
      \Xi = \bigcup _{0 < \kappa < \kappa_{0}} \Xi_{\kappa} \;,
   \end{equ}
   such that the following two properties are
   satisfied:
   \begin{enumerate}
      \item For any smooth $ \mathbf{X}= (\mathbf{X}_{1},
         \mathbf{X}_{2}) \in \mS (\TT^{2}; \mathbf{M}^{2}) \times \mS
	 (\TT^{2}; \mathbf{M}^{2}) \subseteq \Xi $ and $ \varphi \in
	 H^{2} $  we have
         \begin{equ}
            \mf{A}(\mathbf{X}) \varphi =  \frac{1}{2} \Delta \varphi + \mathbf{X}_{1} \para
            \varphi + \mathbf{X}_{1} \rpara \varphi + \varphi^{\sharp} \reso
            \mathbf{X}_{1} + \varphi \para \mathbf{X}_{2} + C^{\reso} (\varphi, P ,
            \mathbf{X}_{1})\;,
         \end{equ}
	 with $ P $ as in \eqref{e:P} and the commutator
         \begin{equ}
            C^{\reso}(\varphi, P, \mathbf{X}_{1}) = \mathbf{X}_{1} \reso
            (\varphi \para P) - \varphi \para ( P \reso \mathbf{X}_{1}) \;.
         \end{equ}
         In particular, if $
         \mathbf{X}_{2} = P \reso \mathbf{X}_{1} $ we recover $
         \mf{A} (\mathbf{X}) \varphi = \frac{1}{2} \Delta \varphi + \mathbf{X}_{1} \varphi $.
      \item For any sequence $ \{\mathbf{X}^{n}\}_{n \in \NN} \subseteq  \mS
   (\TT^{2}; \mathbf{M}^{2}) \times \mS (\TT^{2}; \mathbf{M}^{2}) $ such that
   for some $ \kappa < \kappa_{0} $ and $ \mathbf{X} \in \Xi_{\kappa} $, 
   $ \mathbf{X}^{n} \to \mathbf{X} $ in $ \Xi_{\kappa} $ as $ n \to \infty $, we have
   that \(\mf{A}(\mathbf{X}^{n}) \) converges in resolvent sense to $
   \mf{A}(\mathbf{X}) $.
   \end{enumerate}
   In addition, for any $ \kappa < \kappa_{0} $, there exist two continuous maps $ \mathbf{m},
   \mathbf{c} \colon \Xi_{\kappa} \to \RR_{+} $ (depending on $ \kappa $) such that $ [\mathbf{m}(\mathbf{X}), \infty) \subseteq
   \varrho(\mf{A}(\mathbf{X}))$, for any $ \mathbf{X} \in \Xi_{\kappa} $, where
   $ \varrho( \cdot) $ indicates the resolvent
   set of an operator, with the bound
   \begin{equ}
      \| (- \mf{A}(\mathbf{X}) + m)^{-1} \varphi \|_{\mX_{\kappa}}
      \leqslant \mathbf{c}(\mathbf{X}) \| \varphi \|_{L^{2}}  \;, \qquad \forall m
      \geqslant \mathbf{m}(\mathbf{X}) \;.
   \end{equ}
\end{proposition}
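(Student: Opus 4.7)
My plan is to adapt the construction of the 2D Anderson Hamiltonian by Allez and Chouk \cite{AllezChouk} to the present vector-valued setting. The heuristic is that Bony's decomposition gives
\[
\mathbf{X}_{1}\varphi = \mathbf{X}_{1}\para\varphi + \mathbf{X}_{1}\rpara\varphi + \mathbf{X}_{1}\reso\varphi\;,
\]
the last term being ill-defined for $\mathbf{X}_{1} \in \mC^{-1-\kappa}$ and $\varphi \in L^{2}$. The paracontrolled ansatz $\varphi = \varphi\para P + \varphi^{\sharp}$ with $\varphi^{\sharp} \in H^{2-2\kappa}$ resolves this: since
\[
\mathbf{X}_{1}\reso\varphi = \mathbf{X}_{1}\reso\varphi^{\sharp} + \varphi\para(\mathbf{X}_{1}\reso P) + C^{\reso}(\varphi, P, \mathbf{X}_{1})\;,
\]
and each term is well-defined as soon as we interpret $\mathbf{X}_{1}\reso P$ as $\mathbf{X}_{2}$ (which is the role of the second component of the enhanced noise) and invoke the standard commutator estimate from paracontrolled calculus. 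This motivates defining $\mf{A}(\mathbf{X})\varphi$ on $\mX_{\kappa}(\mathbf{X})$ by the expression in item (1).

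First I would verify that, for $\mathbf{X} \in \Xi_{\kappa}$ and $\varphi \in \mX_{\kappa}(\mathbf{X})$, every term in the defining expression of $\mf{A}(\mathbf{X})\varphi$ lies in $L^{2}$, with a bound of the form $\|\mf{A}(\mathbf{X})\varphi\|_{L^{2}} \lesssim (1+\|\mathbf{X}\|_{\Xi_{\kappa}})^{2}\|\varphi\|_{\mX_{\kappa}}$. The paraproduct terms are handled by Lemma~\ref{lem:parap}; the resonant term $\varphi^{\sharp}\reso\mathbf{X}_{1}$ uses that $\varphi^{\sharp} \in H^{2-2\kappa}$ so the sum of regularities is positive; and the trilinear commutator obeys $\|C^{\reso}(f, g, h)\|_{H^{\alpha}} \lesssim \|f\|_{H^{\sigma}}\|g\|_{\mC^{\beta}}\|h\|_{\mC^{\gamma}}$ for suitable exponents (see for instance \cite{GubinelliImkellerPerkowski2015}). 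Consistency with the smooth case ($\mathbf{X}_{2} = P\reso\mathbf{X}_{1}$) is then an algebraic identity, and this consistency together with continuity forces uniqueness of $\mf{A}$.

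Second, I would prove the resolvent bound: for $\mathbf{X} \in \Xi_{\kappa}$ there is $\mathbf{m}(\mathbf{X})$ such that for every $m \geqslant \mathbf{m}(\mathbf{X})$ and $\psi \in L^{2}$ the equation $(-\mf{A}(\mathbf{X}) + m)\varphi = \psi$ admits a unique solution in $\mX_{\kappa}(\mathbf{X})$ with $\|\varphi\|_{\mX_{\kappa}} \leqslant \mathbf{c}(\mathbf{X})\|\psi\|_{L^{2}}$. Plugging the ansatz $\varphi = \varphi\para P + \varphi^{\sharp}$ into the equation and collecting terms, one finds that $\varphi^{\sharp}$ satisfies a Helmholtz-type equation $(-\Delta/2 + m)\varphi^{\sharp} = \psi + F(\varphi, \mathbf{X})$, where $F$ collects all lower order multilinear terms and is controlled in $H^{-2\kappa}$ by $C(\mathbf{X})\|\varphi\|_{\mX_{\kappa}}$ with a sublinear loss. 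A fixed-point argument in $\mX_{\kappa}(\mathbf{X})$ for $m$ large enough (as a function of $\|\mathbf{X}\|_{\Xi_{\kappa}}$) then yields existence, uniqueness, and the resolvent bound. Symmetry of $\mf{A}(\mathbf{X})$ is easy when $\mathbf{X}$ is smooth and passes to the limit, so that together with the fact that $(-\mf{A}(\mathbf{X}) + m)$ is a bijection, standard criteria give self-adjointness.

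The main obstacle is the quantitative control of the trilinear commutator $C^{\reso}(\varphi, P, \mathbf{X}_{1})$ and the resonant term $\varphi^{\sharp}\reso\mathbf{X}_{1}$ in a way that is linear in $\varphi \in \mX_{\kappa}$, preserves enough regularity to close the fixed-point argument, and produces a constant depending continuously on the enhanced noise $\mathbf{X}$. Once this is achieved, continuity of $\mf{A}$ in the resolvent sense follows routinely: given $\mathbf{X}^{n} \to \mathbf{X}$ in $\Xi_{\kappa}$, the above bound applies uniformly, so the resolvents $(-\mf{A}(\mathbf{X}^{n}) + m)^{-1}$ converge strongly in $L^{2}$ by the continuous dependence of $F$ on $\mathbf{X}$, which is precisely convergence in the graph distance.
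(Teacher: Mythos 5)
The paper does not actually reprove this proposition: its ``proof'' consists of citing \cite[Proposition 4.13]{AllezChouk} and observing that the extension from the scalar to the vector-valued setting is immediate. Your sketch is essentially a recapitulation of that cited construction (paracontrolled ansatz, paraproduct and commutator estimates, a fixed point for the resolvent, self-adjointness from symmetry plus surjectivity, and resolvent continuity by uniformity of the bounds), so in substance it follows the same route rather than a genuinely different one.

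There is, however, one step that fails as written. You claim that for $\mathbf{X} \in \Xi_{\kappa}$ and every $\varphi \in \mX_{\kappa}(\mathbf{X})$ each term of the defining expression lies in $L^{2}$, with $\|\mf{A}(\mathbf{X})\varphi\|_{L^{2}} \lesssim (1+\|\mathbf{X}\|_{\Xi_{\kappa}})^{2}\|\varphi\|_{\mX_{\kappa}}$. This is not true on $\mX_{\kappa}(\mathbf{X})$: with $\varphi \in H^{1-\kappa}$ and $\varphi^{\sharp} \in H^{2-2\kappa}$ one only gets $\tfrac12\Delta\varphi^{\sharp} \in H^{-2\kappa}$ and $\mathbf{X}_{1}\para\varphi \in H^{-2\kappa}$ (the cancellation encoded in the ansatz removes only the $\mC^{-1-\kappa}$-regularity contribution $\mathbf{X}_{1}\rpara\varphi$), so the formula defines a map into $H^{-2\kappa}$ rather than into $L^{2}$. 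This is precisely why Allez--Chouk take as operator domain the strictly smaller space of \emph{strongly} paracontrolled functions (a second-order expansion), which the paper explicitly notes only embeds into $\mX_{\kappa}$; alternatively one can define the domain a posteriori as the image of the resolvent, i.e.\ those paracontrolled $\varphi$ for which the expression happens to lie in $L^{2}$. Your Helmholtz-type fixed point is compatible with the latter choice, but then the $L^{2}$-mapping bound of your first step must be abandoned, and the density of the domain, the symmetry of the operator on that domain, and the uniqueness claim all have to be run through the resolvent (or through the strong paracontrolled structure), as is done in the cited reference.
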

For a proof we refer for example to \cite[Proposition 4.13]{AllezChouk}: the
result is for the scalar setting, but its extension to the vector-valued
case is immediate.
Next we collect the Gaussian computations that are required for the
construction of the symmetrised operator $ \mA $. We start by
rewriting the driving noise in Fourier coordinates. In particular, we are
interested in the projection \(\mathbf{P} \Pi_{\times} \xi\) on divergence-free functions,
which can formally be represented in Fourier coordinates as follows:
\begin{align*}
   \mathbf{P} \Pi_{\times} \xi(t,x) = \sum_{k \in \ZZ^{2}_{*}} e^{2 \pi \iota k \cdot x} 
   \frac{( \partial_{t} \beta^{k, 1}_{t}  k_{2} - \partial_{t} \beta^{k,
   2}_{t} k_{1}) }{|k^{\perp}|} \frac{k^{\perp}}{|k^{\perp}|}\;,
\end{align*}
where \(\{ \beta_{t}^{k, i} \}_{i=1,2, k \in \ZZ^{2}_{*}}\) is a sequence of
complex Brownian motions with covariance structure
\[ \EE [ \partial_{t} \beta^{k , i}_{t} \partial_{t} \beta^{k^{\prime} ,
   j}_{s}] = \delta(t-s) 1_{\{i= j\}} 1_{\{k= -k^{\prime}\}}\;.\] 
   For our purposes it will be more convenient to set $ \zeta_{s}^{k} =
   (\beta^{k, 1}_{s} k_{1} - \beta^{k, 2}_{s} k_{2}) / | k | $, which is again
   a sequence of two-sided complex Brownian motions, with covariance structure
   \begin{equ}
      \EE [ \partial_{t} \zeta^{k}_{t} \partial_{t}
      \zeta^{k^{\prime}}_{s}] = \delta (t-s) 1_{\{ k = k^{\prime} \}} \;.
   \end{equ}
   With this notation, setting $ \{ e_{k} \}_{k \in \ZZ^{d}_{*}} $ a basis for
   the space of divergence-free functions, we can represent
\begin{equation*} 
   \mathbf{P} \xi(t, x) = \sum_{k \in \ZZ^{2}_{*}} \partial_{t}
   \zeta^{k}_{t}e_{k}(x) \;, \qquad e_{k}(x) = e^{2 \pi \iota k \cdot x }
   \frac{k^{\perp}}{|k^{\perp}|} \;.
\end{equation*}
In this context we can write $  \mL_{\lambda} X $ and $
(-\Delta / 2+1)^{-1} \mL_{\lambda} X$ in Fourier coordinates as
follows:
\begin{equs}[eqn:Fourier-X]
   \mL_{\lambda} X (t, x) & = \sum_{k \in \ZZ^{2}_{*}} e_{k}(x)
   F_{t}^{\lambda}(k; i, j) \;, \\
   ( -\Delta /2 + 1)^{-1} \mL_{\lambda} X (t, x) & = \sum_{k \in \ZZ^{2}_{*}}
   e_{k}(x) F_{t}^{\lambda} (k; i, j) (| k |^{2}+1)^{-1}  \;,
\end{equs}
with $ F $ being the stochastic integral
\begin{equ}
   F_{t}^{\lambda}(k) = \int_{0}^{t} e^{- | k |^{2}(t-s)} \ud
   \zeta^{k}_{s} \;.
\end{equ}
We are now ready to state the main result of this subsection, namely the
convergence of the stochastic terms required to make sense of the operator $
\mA $.

\begin{lemma}\label{lem:stochastic-bds}
   For any $ \kappa > 0 $, let $ \Xi_{\kappa} \subseteq  \mC^{-1 - \kappa}(\TT^{2}; \mathbf{M}^{2}) \times
   \mC^{ - \kappa}(\TT^{2}; \mathbf{M}^{2})$ be as in \eqref{e:modelled}.
   Furthermore, define for any $ \lambda > 1 $
   \begin{equ}[e:p-lambda]
      P^{\lambda} (t, x) = (-\Delta /2 + 1)^{-1} 2 \nabla_{\mathrm{sym}}
      \mL_{\lambda} X (t, x) \;.
   \end{equ}
   Then for any $ t \geqslant 0 $ there exists a
   distribution $  \nabla_{\mathrm{sym}}X_{t} \diamond
   P_{t}  $ in $  \mC^{- \kappa} (\TT^{2}; \mathbf{M}^{2 }) $, for which the
   following convergence holds, for any $ \kappa > 0 $, both in $ L^{p} ( \Omega ;  C_{\mathrm{loc}}(\RR_{+};
   \Xi_{\kappa}) )$ for any $ p \in [1, \infty) $ and almost surely:
   \begin{equ}
      ( 2 \nabla_{\mathrm{sym}}\mL_{\lambda^{n}} X \;, ( 2\nabla_{\mathrm{sym}}
      \mL_{\lambda^{n}} X) \reso P^{\lambda^{n}} - \mf{r}_{\lambda^{n}} \mathrm{Id} ) \to
      ( 2\nabla_{\mathrm{sym}}X, 2 \nabla_{\mathrm{sym}}X \diamond P) \;, 
   \end{equ}
   as \( n \to \infty \)\footnote{Here we view all random variables as
   time-dependent, so that the map $ \RR_{+} \ni t \mapsto
   (2 \nabla_{\mathrm{sym}}X_{t}, 2 \nabla_{\mathrm{sym}}X_{t} \diamond P_{t}) $ is
a continuous path with values in $ \Xi_{\kappa}$.}.
In addition, the renormalisation constants $ \mf{r}_{\lambda}(t) $ satisfy, for
some \( c > 0 \) and uniformly over all \( \lambda \geqslant 1 \) and $ t
\geqslant 0 $
   \begin{equ}[eqn:ren-const]
      \mf{r}_{\lambda} (t) = \frac{1}{4} \sum_{k \in \ZZ^{2}_{*}}
      \frac{\mf{l}(| k |/ \lambda)}{| k |^{2}/2 + 1}(1 - e^{- 2| k |^{2} t}) \;,
      \qquad \mf{r}_{\lambda} (t) \leqslant c \log{ \lambda} \;, 
   \end{equ}
   with $ \mf{l} $ as in Definition~\ref{def:cut-off}.
\end{lemma}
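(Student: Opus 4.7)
The plan is to exploit that $\nabla_{\mathrm{sym}} X$ and $P^\lambda$ both belong to the first inhomogeneous Wiener chaos generated by $\xi$, so that their resonant product admits a chaos decomposition into a zeroth-chaos part (the expectation) which diverges logarithmically as $\lambda \to \infty$ and a centred second-chaos part which is controlled by standard hypercontractivity. Concretely, I would first note that $2\nabla_{\mathrm{sym}} \mL_{\lambda^n} X \to 2\nabla_{\mathrm{sym}} X$ in $C_{\mathrm{loc}}(\RR_+; \mC^{-1-\kappa})$ almost surely and in every $L^p(\Omega)$ by a direct Littlewood--Paley computation using the Fourier representation \eqref{eqn:Fourier-X} together with Kolmogorov's continuity theorem (the blocks $\Delta_q \nabla_{\mathrm{sym}}(\mathrm{Id} - \mL_{\lambda^n}) X_t$ vanish once $2^q \lesssim \lambda^n$, and for $2^q \gtrsim \lambda^n$ the $L^2(\Omega)$ bound decays by a power of $2^{-q}/\lambda^n$). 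This takes care of the first component of the enhanced noise.

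The real work is the second component. Using \eqref{eqn:Fourier-X}, expanding the resonant product in Littlewood--Paley blocks, and applying Wick's formula to pair the two Gaussian factors $F^{\lambda}_t(k)$ and $\overline{F^{\lambda}_t(k')}$, the diagonal pairing $k = k'$ produces a deterministic contribution; after symmetrising via $2\nabla_{\mathrm{sym}}$ and using that the Leray basis functions $e_k$ point along $k^\perp/|k^\perp|$, the isotropy forces this diagonal piece to be precisely $\mf{r}_\lambda(t)\mathrm{Id}$ as in \eqref{eqn:ren-const}, modulo a remainder that is uniformly bounded in $\lambda$ and converges as $\lambda \to \infty$. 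The logarithmic bound $\mf{r}_\lambda(t) \leqslant c \log \lambda$ is then immediate from $1 - e^{-2|k|^2 t} \leqslant 1$ and the identity
\begin{equ}
\sum_{k \in \ZZ^2_*} \frac{\mf{l}(|k|/\lambda)}{|k|^2/2 + 1} \lesssim \sum_{1 \leqslant |k| \lesssim \lambda} |k|^{-2} \lesssim \log \lambda \;,
\end{equ}
which holds uniformly in $t \geqslant 0$.

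The centred remainder $(2\nabla_{\mathrm{sym}} \mL_{\lambda^n} X_t) \reso P^{\lambda^n}_t - \mf{r}_{\lambda^n}(t)\mathrm{Id}$ lies in the second Wiener chaos. I would bound each Littlewood--Paley block of the difference against the candidate limit $2\nabla_{\mathrm{sym}} X_t \diamond P_t$ in $L^2(\Omega)$ via a direct Fourier computation of the non-diagonal chaos contribution, obtaining an estimate of the form $\EE|\Delta_q(\cdot)|^2 \lesssim 2^{2q\kappa}(\lambda^n)^{-\delta}$ for some small $\delta > 0$. Nelson's hypercontractivity lifts this to $L^p(\Omega)$ for every $p < \infty$, and Besov embedding gives $\mC^{-\kappa}$ control. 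Analogous increment bounds $\EE|\Delta_q(\cdot_t - \cdot_s)|^{2p} \lesssim |t-s|^p 2^{-2pq\eta}$ combined with Kolmogorov's criterion yield joint continuity in time and convergence in $L^p(\Omega; C_{\mathrm{loc}}(\RR_+; \Xi_\kappa))$. Almost sure convergence along the geometric sequence $\{\lambda^n\}$ follows from Borel--Cantelli because the bounds are summable along this sequence.

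The main obstacle, I expect, will be handling the time dependence of $\mf{r}_\lambda(t)$ uniformly: since the renormalisation is time-dependent through $1 - e^{-2|k|^2 t}$, one must verify that the truncation errors in $t$ remain under control on compacta, and in particular that subtracting the correct $t$-dependent constant (rather than a purely $\lambda$-dependent one) is necessary to obtain a non-trivial limit. A related technical point is maintaining uniformity in $\lambda$ along the discrete scale $\lambda^i$ so that the null set provided by Borel--Cantelli is independent of all initial conditions, which is required for the application in Lemma~\ref{lem:prob-space}.
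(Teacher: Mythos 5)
Your proposal follows essentially the same route as the paper's proof: decompose the resonant product into its zeroth chaos, which an explicit Fourier/Wick computation identifies (up to a harmless bounded remainder from $\mf{l}$ versus $\mf{l}^2$) with $\mf{r}_\lambda(t)\mathrm{Id}$ and bounds by $c\log\lambda$, and a second-chaos remainder controlled by $L^2$ estimates on Littlewood--Paley blocks lifted to $L^p$ by hypercontractivity, with almost sure convergence along $\{\lambda^i\}$ obtained from summable decay in $i$ (note $\lambda^i=(1+i)^{\mf{a}}$ is polynomial rather than geometric, so one needs $p$ large, exactly as in the paper) and time regularity from analogous increment bounds. This matches the paper's argument, which is only slightly less explicit about the Kolmogorov-type step for continuity in time.
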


\begin{proof}
   We restrict ourselves to proving the convergence of the product $
   (\mL_{\lambda^{n}} \nabla_{\mathrm{sym}} X )\reso P^{\lambda^{n}} -
   \mf{r}_{\lambda^{n}} (t)$, as the
   convergence of $ \mL_{\lambda^{n}} \nabla_{\mathrm{sym}} X $ follows along
   similar calculations. We observe
   that both $ X $ and \( P^{\lambda} \) are
   Gaussian fields, so their product lives in the second and the zeroth chaos.
   We treat the two terms differently, since the renormalisation constant \(
   \mf{r}_{\lambda} \) is chosen exactly to cancel out the zeroth chaos.
   In components, the problem amounts to studying the following product:
   \begin{equ}
      ( 2\nabla_{\mathrm{sym}} \mL_{\lambda} X \reso P^{\lambda} )_{i, j} =
      \sum_{l= 1, 2} (\partial_{i} X^{l}_{\lambda} + \partial_{l}
      X^{i}_{\lambda}) \reso (-\Delta /2+1)^{-1} (\partial_{l} X^{j}_{\lambda} + \partial_{j}
      X^{l}_{\lambda}) \;,
   \end{equ}
   where $ X_{\lambda} = \mL_{\lambda} X = (X^{i}_{\lambda})_{i = 1, 2} $.
   Hence we are lead to consider a product of the
   following form, for $ i, j, l, m \in \{ 1, 2 \} $ 
   \begin{equ}
      \partial_{i} X^{j}_{\lambda} \reso (-\Delta /2 +1)^{-1} \partial_{l} X^{m}_{\lambda} \;.
   \end{equ}
   Using \eqref{eqn:Fourier-X} we can represent this product as follows:
   \begin{equs}
      \sum_{k, k^{\prime} \in \ZZ^{2}_{*}} \sum_{| c-d | \leqslant 1} e^{\iota k
      \cdot x} \varrho_{c} (k - k^{\prime} ) & \varrho_{d} (k^{\prime} )
      \mf{l} (| k - k^{\prime} | / \lambda ) \mf{l}(| k^{\prime} | / \lambda)
      \\
      & \cdot F_{t}(k - k^{\prime} ) F_{t}(k^{\prime} ) (| k^{\prime}  |^{2}/2+1)^{-1} 
      \mf{c}^{i, l}_{j , m} (k - k^{\prime} , k^{\prime}  ) \;.
   \end{equs}
   Here the coefficient $ \mf{c}^{i, l}_{j, m} $ is defined as
   \begin{equ}
      \mf{c}^{i, l}_{j, m} (k, k^{\prime})  = - k_{i} k^{\perp}_{j}
      k_{l}^{\prime} (k^{\prime} )^{\perp}_{m} | k |^{-1} | k^{\prime} |^{-1}
      \;.
   \end{equ}

   \textit{Zeroth chaos.} In particular, the zeroth chaos (the average) is
   given by the contraction along the line
   \( k - k^{\prime} = - k^{\prime} \), so that we find
   \begin{equs}
      \EE [\partial_{i} X^{j}_{\lambda} \reso (-\Delta/2+1)^{-1} \partial_{l}
      X^{m}_{\lambda}](x)& = \sum_{k \in \ZZ^{2}_{*}} \frac{\mf{l}^{2}(| k
      |/ \lambda)}{4 | k |^{4}(| k |^{2}/2 +1)}  (1 - e^{- 2 | k
      |^{2}t})\mf{c}^{i, l}_{j, m} (-k, k) \\
      &= \sum_{k \in \ZZ^{2}_{*}} \mf{l}^{2}(| k |/ \lambda)(1 - e^{- 2 | k
      |^{2}t})\frac{ k_{i} k^{\perp}_{j}
      k_{l}k^{\perp}_{m}}{4 | k |^{4}(| k |^{2}/2 + 1)}  \;,
   \end{equs}
   since $ \int_{0}^{t} e^{- 2 | k |^{2} (t -s)} \ud s =
   \frac{1}{2 | k |^{2}} (1 - e^{- 2 | k |^{2}t})$. In particular, it follows that the average is
   nonzero only in two cases, namely if either $ i = j = l=m $ or exactly two
   of the indices are $ 1 $ and the other two are $ 2 $ (in all other cases the
   sum is anti-symmetric). As a consequence of this observation we immediately
   obtain that
   \begin{equ}
      \EE [ ( 2 \nabla_{\mathrm{sym}} \mL_{\lambda} X \reso P^{\lambda} )_{i, j}]
      = 0 \;, \text{ if } i \neq j \;.
   \end{equ}
   We are thus left with computing the average $ \EE [(2\nabla_{\mathrm{sym}}
   \mL_{\lambda} X \reso P^{\lambda} )_{i, i}] $. This amounts to considering
   four different terms. We start by observing that
   \begin{equs}
      \EE \bigg[ \sum_{l = 1, 2} \partial_{i} X^{l}_{\lambda} & \reso (-\Delta
         /2+1)^{-1}
      \partial_{l} X^{i}_{\lambda} \bigg] \\
      & =  \sum_{k \in \ZZ^{2}_{*}} \sum_{l = 1,2}\mf{l}^{2}(| k |/ \lambda)\frac{ k_{i} k^{\perp}_{l}
      k_{l}k^{\perp}_{j}}{4 | k |^{4}( | k |^{2}/2 + 1)} (1 - e^{- 2 | k
      |^{2}t})\\
      & =  \sum_{k \in \ZZ^{2}_{*}} \mf{l}^{2}(| k |/ \lambda)\frac{ k_{i} \langle k^{\perp}, 
      k \rangle k^{\perp}_{j}}{4 | k |^{4}( | k |^{2}/2 + 1)} (1 - e^{- 2 | k
      |^{2}t})= 0 \;.
   \end{equs}
   Similarly also
   \begin{equ}
      \EE \left[ \sum_{l=1,2}  \partial_{l} X^{i}_{\lambda} \reso (-\Delta/2+1)^{-1}
      \partial_{i} X^{l}_{\lambda} \right] = 0 \;.
   \end{equ}
   In particular, we have reduced ourselves to computing
   \begin{equs}
      \EE [( 2 \nabla_{\mathrm{sym}} & \mL_{\lambda} X \reso P^{\lambda} )_{i,
      i}] \\
      & =  \sum_{l=1, 2} \EE \left[ \partial_{i} X^{l}_{\lambda} \reso
         (-\Delta/2+1)^{-1} (\partial_{i} X^{l}_{\lambda}) + \partial_{l} X^{i}_{\lambda} \reso
      (-\Delta/2+1)^{-1} \partial_{l} X^{i}_{\lambda} \right] \\ & =  \sum_{k \in
      \ZZ^{2}_{*}} \sum_{l =1,2} \frac{\mf{l}^{2}(| k |/ \lambda)}{4 | k |^{4}(
      | k |^{2}/2 +1) }  \left\{ k_{i}^{2} (k_{l}^{\perp})^{2} +
      k_{l}^{2}(k_{i}^{\perp})^{2} \right\} (1 - e^{- 2 | k
      |^{2}t})\\ 
      & = \sum_{k \in \ZZ^{2}_{*}} 
      \frac{\mf{l}^{2}(| k |/ \lambda)}{4 | k |^{2}(| k |^{2}/2 +1)}  \left\{ k_{i}^{2}  +
      (k_{i}^{\perp})^{2} \right\} (1 - e^{- 2 | k
      |^{2}t})\\
      & = \frac{1}{4} \sum_{k \in \ZZ^{2}_{*}} 
      \frac{\mf{l}^{2}(| k |/ \lambda)}{| k |^{2}/2 + 1} (1 - e^{- 2 | k
      |^{2}t})\;,
   \end{equs}
   which is the required quantity.

   \textit{Second chaos.} Instead, for the second chaos we can bound for any $
   p \geqslant 2 $ by Gaussian hypercontractivity
   \begin{equs}
      \EE \big[ \big\vert \Delta_{j} ( 2 \nabla_{\mathrm{sym}} \mL_{\lambda} X \reso P^{\lambda} )_{i,
      j} & - \mf{r}_{\lambda} (t) \big\vert^{p} (x) \big] \\
      & \lesssim \EE \left[
         \big\vert \Delta_{j} ( 2 \nabla_{\mathrm{sym}} \mL_{\lambda} X \reso
         P^{\lambda} )_{i, j} - \mf{r}_{\lambda} (t) \big\vert^{2} (x)
      \right]^{\frac{p}{2}} \;.
   \end{equs}
   Then, for the second moment we can estimate with $ \psi_{0}(k, k^{\prime}) =
   \sum_{| c -d | \leqslant 1} \varrho_{c} (k) \varrho_{d}(k^{\prime})$ 
   \begin{equs}
      \EE \big[
         \big\vert \Delta_{j} &( 2 \nabla_{\mathrm{sym}} \mL_{\lambda} X \reso
         P^{\lambda} )_{i, j}  - \mf{r}_{\lambda} (t) \big\vert^{2} (x)
      \big] \\
      &\lesssim \sum_{k, k^{\prime} \in \ZZ^{2}_{*} } \int_{0}^{t}
      \int_{0}^{t} \varrho_{j}^{2} (k+ k^{\prime}) \psi_{0}^{2} (k, k^{\prime})
      \mf{l}^{2} (| k | / \lambda ) \mf{l}^{2}(| k^{\prime} | / \lambda)
      \\
      & \qquad \qquad \qquad \qquad \cdot e^{- 2 (t -s) | k |^{2}} e^{- 2 (t -s^{\prime} ) | k^{\prime}
      |^{2}} (| k^{\prime}  |^{2}+1)^{-2} 
      |\mf{c}^{i, l}_{j , m} (k , k^{\prime}  ) |^{2} \ud s \ud s^{\prime} \\
      & \lesssim \sum_{k, k^{\prime} \in \ZZ^{2}_{*} }   \varrho_{j}^{2} (k
      ) \psi_{0}^{2} (k- k^{\prime} , k^{\prime})
      \mf{l}^{2} (| k - k^{\prime}  | / \lambda ) \mf{l}^{2}(| k^{\prime} | / \lambda)
      \frac{|\mf{c}^{i, l}_{j , m} (k - k^{\prime}  , k^{\prime}  ) |^{2}}{| k
      - k^{\prime} |^{2} | k^{\prime} |^{2} (| k^{\prime} |^{2}/2 + 1)^{2}} \\
      & \lesssim 2^{jd} \sum_{|k^{\prime}| \gtrsim 2^{j} }   
      \frac{|k^{\prime}  |^{4}}{|
      k^{\prime} |^{4} (| k^{\prime} |^{2} /2 + 1 )^{2}} \lesssim 2^{j d}
      \sum_{| k^{\prime} | \gtrsim 2^{j}} \frac{1}{| k^{\prime} |^{4}} \lesssim
      2^{j (d -2)} \lesssim 1 \;,
   \end{equs}
   since we are in dimension $ d =2 $. In this way we obtain for any $ \kappa >
   0 $ and $ p \geqslant 2 $ that
   \begin{equs}
      \sup_{\lambda \geqslant 1} \EE & \left[ \| (2 \nabla_{\mathrm{sym}}
      X^{\lambda}_{t}) \reso P^{\lambda}_{t} - \mf{r}_{\lambda}(t)
   \|_{\mB^{- \kappa}_{p,p}}^{p}\right] \\
   & \lesssim \sup_{\lambda \geqslant 1}
   \sup_{j \geqslant -1} \sup_{x \in \TT^{2}} \EE \left[ \big\vert \Delta_{j} (
         2 \nabla_{\mathrm{sym}} \mL_{\lambda} X \reso P^{\lambda} )_{i, j} -
      \mf{r}_{\lambda} (t) \big\vert^{2} (x)\right]^{\frac{p}{2}}   < \infty \;.
   \end{equs}
   From here to obtain convergence of the sequence in $ L^{p} $ for $ \lambda
   \to \infty $ follows along classical lines. Instead, let us address the
   almost sure convergence for the sequence $ \{ \lambda^{i} \}_{i \in \NN} $.
   To this aim, we have to bound for any $ i \in \NN $ the difference
   \begin{equ}
      \EE \left[ \| (2 \nabla_{\mathrm{sym}}
         X^{\lambda^{i}}_{t}) \reso P^{\lambda^{i}}_{t} -
         \mf{r}_{\lambda^{i}}(t) -  (2 \nabla_{\mathrm{sym}}
         X^{\lambda^{i+1}}_{t}) \reso P^{\lambda^{i+1}}_{t} +
         \mf{r}_{\lambda^{i+1}}(t)  \|_{\mB^{- \kappa}_{p,p}}^{p}\right]
   \end{equ}
   Following the previous calculation we are thus led to bound, for $ j
   \geqslant -1 $ and $ x \in \TT^{d} $
   \begin{equs}
      \EE & \left[ \left\vert \Delta_{j} \left[(2 \nabla_{\mathrm{sym}}
            X^{\lambda^{i}}_{t}) \reso P^{\lambda^{i}}_{t} -
         \mf{r}_{\lambda^{i}}(t) -  (2 \nabla_{\mathrm{sym}}
         X^{\lambda^{i+1}}_{t}) \reso P^{\lambda^{i+1}}_{t} +
      \mf{r}_{\lambda^{i+1}}(t)   \right]  (x) \right\vert^{2} \right] \\
      & \lesssim \sum_{k, k^{\prime} \in \ZZ^{2}_{*} } \int_{0}^{t}
      \int_{0}^{t} \varrho_{j}^{2} (k+ k^{\prime}) \psi_{0}^{2} (k, k^{\prime}) \\
      & \qquad \qquad \qquad \qquad \cdot \left\{ \mf{l} (| k | / \lambda^{i} ) \mf{l}(| k^{\prime} | /
         \lambda^{i}) -  \mf{l} (| k | / \lambda^{i+1}) \mf{l}(| k^{\prime} | /
      \lambda^{i+1})\right\}^{2} \\
      & \qquad \qquad \qquad \qquad \cdot e^{- 2 (t -s) | k |^{2}} e^{- 2 (t -s^{\prime} ) | k^{\prime}
      |^{2}} (| k^{\prime}  |^{2}+1)^{-2} 
      |\mf{c}^{i, l}_{j , m} (k , k^{\prime}  ) |^{2} \ud s \ud s^{\prime}\\
      & \lesssim \sum_{k, k^{\prime} \in \ZZ^{2}_{*}} \varrho_{j}^{2} (k+
      k^{\prime}) \psi_{0}^{2} (k, k^{\prime}) \frac{|\mf{c}^{i, l}_{j , m} (k
      , k^{\prime}  ) |^{2}}{| k |^{2} | k^{\prime} |^{2}( | k^{\prime} |^{2} +
      1)} \{ 1_{[\lambda^{i}, \lambda^{i+1}]} (k) + 1_{\{ [\lambda^{i},
      \lambda^{i+1}] \}}(k^{\prime}) \} \\
      & \lesssim  (\lambda^{i})^{- \frac{\kappa}{4}} \sum_{k, k^{\prime} \in
      \ZZ^{2}_{*}} \varrho_{j}^{2} (k+ k^{\prime}) \psi_{0}^{2} (k, k^{\prime})
      \frac{|\mf{c}^{i, l}_{j , m} (k
      , k^{\prime}  ) |^{2}}{| k |^{2 - \frac{\kappa}{4} } | k^{\prime} |^{2 -
         \frac{\kappa}{4} }( | k^{\prime} |^{2} + 1)} \\
         & \lesssim (\lambda^{i})^{- \frac{\kappa}{4}} 2^{\frac{\kappa}{2}
         j} \;,
   \end{equs}
   where in the last step we follow the previous calculations. We deduce that
   \begin{equs}
      \EE \bigg[ \| (2 \nabla_{\mathrm{sym}}
         X^{\lambda^{i}}_{t}) \reso P^{\lambda^{i}}_{t} -
         \mf{r}_{\lambda^{i}}(t) -  (2 \nabla_{\mathrm{sym}}
         X^{\lambda^{i+1}}_{t}) \reso P^{\lambda^{i+1}}_{t} + & 
         \mf{r}_{\lambda^{i+1}}(t)  \|_{\mB^{- \kappa}_{p,p}}^{p}\bigg] \\
         & \lesssim (\lambda^{i})^{- \frac{\kappa p }{2}} \;,
   \end{equs}
   so that the almost sure convergence follows, since by \eqref{e:def-lambda}
   we have $ \sum_{i \in \NN} (\lambda^{i})^{- \frac{\kappa p}{2}} < \infty$
   for \( p \geqslant 2 \) sufficiently large. The convergence uniformly in
   time follows by similar estimates, and this concludes the proof.
\end{proof}

\bibliographystyle{Martin}
\bibliography{bibliography}

\begin{thebibliography}{CCHS22b}
\def\myhref#1#2{\href{#2}{\nolinkurl{#1}}}

\bibitem[AC15]{AllezChouk}
\textsc{R.~Allez} and \textsc{K.~Chouk}.
\newblock The continuous {A}nderson hamiltonian in dimension two.
\newblock \emph{arXiv preprint} (2015).
\newblock \myhref{arXiv:1511.02718}{https://arxiv.org/abs/1511.02718}.

\bibitem[BCD11]{BahouriCheminDanchin2011FourierAndNonLinPDEs}
\textsc{H.~Bahouri}, \textsc{J.-Y. Chemin}, and \textsc{R.~Danchin}.
\newblock \emph{Fourier analysis and nonlinear partial differential equations},
  vol. 343 of \emph{Grundlehren der Mathematischen Wissenschaften [Fundamental
  Principles of Mathematical Sciences]}.
\newblock Springer, Heidelberg, 2011,  xvi+523.
\newblock
  \myhref{doi:10.1007/978-3-642-16830-7}{https://dx.doi.org/10.1007/978-3-642-16830-7}.

\bibitem[Bou96]{Bourgain1996}
\textsc{J.~Bourgain}.
\newblock Invariant measures for the {$2$}{D}-defocusing nonlinear
  {S}chr\"{o}dinger equation.
\newblock \emph{Comm. Math. Phys.} \textbf{176}, no.~2, (1996), 421--445.

\bibitem[CCHS22a]{chandra2022langevin}
\textsc{A.~Chandra}, \textsc{I.~Chevyrev}, \textsc{M.~Hairer}, and
  \textsc{H.~Shen}.
\newblock Langevin dynamic for the 2{D} {Y}ang--{M}ills measure.
\newblock \emph{Publications math{\'e}matiques de l'IH{\'E}S} (2022), 1--147.
\newblock \myhref{arXiv:2006.04987}{https://arxiv.org/abs/2006.04987}.

\bibitem[CCHS22b]{chandra2022stochastic}
\textsc{A.~Chandra}, \textsc{I.~Chevyrev}, \textsc{M.~Hairer}, and
  \textsc{H.~Shen}.
\newblock Stochastic quantisation of {Y}ang-{M}ills-{H}iggs in 3{D} (2022).
\newblock \myhref{arXiv:2201.03487}{https://arxiv.org/abs/2201.03487}.

\bibitem[Che22]{chevyrev2022stochastic}
\textsc{I.~Chevyrev}.
\newblock Stochastic quantisation of {Y}ang-{M}ills (2022).
\newblock \myhref{arXiv:2202.13359}{https://arxiv.org/abs/2202.13359}.

\bibitem[CMW19]{ChandraMoinat19}
\textsc{A.~Chandra}, \textsc{A.~Moinat}, and \textsc{H.~Weber}.
\newblock A priori bounds for the $\phi^4$ equation in the full sub-critical
  regime, 2019.
\newblock \myhref{arXiv:1910.13854}{https://arxiv.org/abs/1910.13854}.

\bibitem[DPD02]{DaPrato-Debusshe-2DNS-2002}
\textsc{G.~Da~Prato} and \textsc{A.~Debussche}.
\newblock Two-dimensional {N}avier-{S}tokes equations driven by a space-time
  white noise.
\newblock \emph{J. Funct. Anal.} \textbf{196}, no.~1, (2002), 180--210.
\newblock
  \myhref{doi:10.1006/jfan.2002.3919}{https://dx.doi.org/10.1006/jfan.2002.3919}.

\bibitem[GH19]{GubinelliHofmanova19Global}
\textsc{M.~Gubinelli} and \textsc{M.~Hofmanov\'{a}}.
\newblock Global solutions to elliptic and parabolic {$\Phi^4$} models in
  {E}uclidean space.
\newblock \emph{Comm. Math. Phys.} \textbf{368}, no.~3, (2019), 1201--1266.
\newblock
  \myhref{doi:10.1007/s00220-019-03398-4}{https://dx.doi.org/10.1007/s00220-019-03398-4}.

\bibitem[GH21]{GubinelliHofmanova21Phi4}
\textsc{M.~Gubinelli} and \textsc{M.~Hofmanov\'{a}}.
\newblock A {PDE} construction of the {E}uclidean {$\phi_3^4$} quantum field
  theory.
\newblock \emph{Comm. Math. Phys.} \textbf{384}, no.~1, (2021), 1--75.
\newblock
  \myhref{doi:10.1007/s00220-021-04022-0}{https://dx.doi.org/10.1007/s00220-021-04022-0}.

\bibitem[GIP15]{GubinelliImkellerPerkowski2015}
\textsc{M.~Gubinelli}, \textsc{P.~Imkeller}, and \textsc{N.~Perkowski}.
\newblock Paracontrolled distributions and singular {PDE}s.
\newblock \emph{Forum Math. Pi} \textbf{3}, (2015), e6, 75.
\newblock
  \myhref{doi:10.1017/fmp.2015.2}{https://dx.doi.org/10.1017/fmp.2015.2}.

\bibitem[GP02]{gallagher2002global}
\textsc{I.~Gallagher} and \textsc{F.~Planchon}.
\newblock On global infinite energy solutions to the {N}avier-{S}tokes
  equations in two dimensions.
\newblock \emph{Archive for Rational Mechanics and Analysis} \textbf{161},
  no.~4, (2002), 307--337.

\bibitem[GP17]{GubinelliPerkowski2017KPZ}
\textsc{M.~Gubinelli} and \textsc{N.~Perkowski}.
\newblock {KPZ} reloaded.
\newblock \emph{Comm. Math. Phys.} \textbf{349}, no.~1, (2017), 165--269.
\newblock \myhref{arXiv:1508.03877}{https://arxiv.org/abs/1508.03877}.

\bibitem[Hai14]{Hairer2014}
\textsc{M.~Hairer}.
\newblock A theory of regularity structures.
\newblock \emph{Invent. Math.} \textbf{198}, no.~2, (2014), 269--504.
\newblock
  \myhref{doi:10.1007/s00222-014-0505-4}{https://dx.doi.org/10.1007/s00222-014-0505-4}.

\bibitem[Hai21]{hairer2021introduction}
\textsc{M.~Hairer}.
\newblock Introduction to {M}alliavin calculus, 2021.

\bibitem[HSV07]{HairerStuart07SPDEs}
\textsc{M.~Hairer}, \textsc{A.~M. Stuart}, and \textsc{J.~Voss}.
\newblock Analysis of {SPDE}s arising in path sampling. {II}. {T}he nonlinear
  case.
\newblock \emph{Ann. Appl. Probab.} \textbf{17}, no. 5-6, (2007), 1657--1706.
\newblock \myhref{doi:10.1214/07-AAP441}{https://dx.doi.org/10.1214/07-AAP441}.

\bibitem[HZZ21a]{hofmanova2021globalST}
\textsc{M.~Hofmanov{\'a}}, \textsc{R.~Zhu}, and \textsc{X.~Zhu}.
\newblock Global existence and non-uniqueness for 3{D} {N}avier--{S}tokes
  equations with space-time white noise.
\newblock \emph{arXiv preprint} (2021).
\newblock \myhref{arXiv:2112.14093}{https://arxiv.org/abs/2112.14093}.

\bibitem[HZZ21b]{hofmanova2021global}
\textsc{M.~Hofmanov{\'a}}, \textsc{R.~Zhu}, and \textsc{X.~Zhu}.
\newblock Global-in-time probabilistically strong and {M}arkov solutions to
  stochastic 3{D} {N}avier--{S}tokes equations: existence and non-uniqueness.
\newblock \emph{arXiv preprint} (2021).
\newblock \myhref{arXiv:2104.09889}{https://arxiv.org/abs/2104.09889}.

\bibitem[HZZ22a]{hofmanova2022class}
\textsc{M.~Hofmanov{\'a}}, \textsc{R.~Zhu}, and \textsc{X.~Zhu}.
\newblock A class of supercritical/critical singular stochastic {PDE}s:
  existence, non-uniqueness, non-{G}aussianity, non-unique ergodicity.
\newblock \emph{arXiv preprint} (2022).
\newblock \myhref{arXiv:2205.13378}{https://arxiv.org/abs/2205.13378}.

\bibitem[HZZ22b]{hofmanova2022non}
\textsc{M.~Hofmanov{\'a}}, \textsc{R.~Zhu}, and \textsc{X.~Zhu}.
\newblock Non-unique ergodicity for deterministic and stochastic 3{D}
  {N}avier--{S}tokes and {E}uler equations.
\newblock \emph{arXiv preprint} (2022).
\newblock \myhref{arXiv:2208.08290}{https://arxiv.org/abs/2208.08290}.

\bibitem[Kat95]{Kato1995}
\textsc{T.~Kato}.
\newblock \emph{Perturbation theory for linear operators}.
\newblock Classics in Mathematics. Springer-Verlag, Berlin, 1995,  xxii+619.
\newblock Reprint of the 1980 edition.

\bibitem[KPV93]{KPV}
\textsc{C.~E. Kenig}, \textsc{G.~Ponce}, and \textsc{L.~Vega}.
\newblock Well-posedness and scattering results for the generalized
  {K}orteweg-de {V}ries equation via the contraction principle.
\newblock \emph{Comm. Pure Appl. Math.} \textbf{46}, no.~4, (1993), 527--620.
\newblock
  \myhref{doi:10.1002/cpa.3160460405}{https://dx.doi.org/10.1002/cpa.3160460405}.

\bibitem[MW17]{MourratWeber17Phi4}
\textsc{J.-C. Mourrat} and \textsc{H.~Weber}.
\newblock The dynamic {$\Phi^4_3$} model comes down from infinity.
\newblock \emph{Comm. Math. Phys.} \textbf{356}, no.~3, (2017), 673--753.
\newblock
  \myhref{doi:10.1007/s00220-017-2997-4}{https://dx.doi.org/10.1007/s00220-017-2997-4}.

\bibitem[Nel66]{Nelson}
\textsc{E.~Nelson}.
\newblock A quartic interaction in two dimensions.
\newblock In \emph{Mathematical {T}heory of {E}lementary {P}articles ({P}roc.
  {C}onf., {D}edham, {M}ass., 1965)},  69--73. M.I.T. Press, Cambridge, Mass.,
  1966.

\bibitem[Tem01]{temam2001navier}
\textsc{R.~Temam}.
\newblock \emph{Navier-Stokes equations: theory and numerical analysis}, vol.
  343.
\newblock American Mathematical Soc., 2001.

\bibitem[ZZ17]{zhu2017strong}
\textsc{R.~Zhu} and \textsc{X.~Zhu}.
\newblock Strong-{F}eller property for {N}avier-{S}tokes equations driven by
  space-time white noise.
\newblock \emph{arXiv preprint} (2017).
\newblock \myhref{arXiv:1709.09306}{https://arxiv.org/abs/1709.09306}.

\end{thebibliography}

\appendix

\section{Function spaces and paraproducts}\label{sec:function-spaces}
We define the space of Schwartz functions \(\mS(\TT^{2}; \RR^{d})= \bigcap_{k \in
\NN}C^{k}(\TT^{2}; \RR^{d})\) and their topological dual, the set of Schwartz
distributions \(\mS^{\prime}(\TT^{2}; \RR^{d})\). Then the Fourier transform
$\hat{\varphi}$ is defined for any distribution $\varphi \in \mS^{\prime}(\TT^2 ; \RR^d)$:
\[ \hat{\varphi} (k) =\mathcal{F} \varphi (k) = \int_{\TT^2} e^{- 2 \pi
   \iota k \cdot x} \varphi (x) \ud x, \qquad \hat{\varphi} : \ZZ^2
   \rightarrow \RR^d . \]
   We additionally define the space of mean-free Schwartz distributions $
   \mS^{\prime}_{\times} (\TT^{2}; \RR^{d}) = \{ \varphi \in  \mS^{\prime}
   (\TT^{2}; \RR^{d} )  \; \colon \; \hat{\varphi} (0) = 0  \}$.
   Then, for any $p\in [1,\infty], d\in\NN$ and \(O \subseteq \RR^{d}\) we denote with
$L^p(\TT^2;O)$ the
Banach space of measurable functions (modulo modifications on a null set) $\varphi \colon \TT^2 \to \RR^d$  such that the norm
\[ \| \varphi \|_{L^p(\TT^{2}; \RR^{d})} =\left( \int_{\TT^d} |\varphi (x)|^p \ud x\right)^{\frac{1}{p}}\]
is finite, with the usual convention for $ p = \infty $. For brevity we write
\begin{equ}
   \| \varphi \| = \| \varphi \|_{L^{2}} \;.
\end{equ}
Next we introduce the scale of mean-free Besov spaces
\(\mB^{\alpha}_{p,q}(\TT^{2} ; \RR^{d}) \subseteq \mS^{\prime}_{\times} (\TT^{2};
\RR^{d})\), for \(\alpha \in \RR\), \(p,q \in
[1, \infty]\). Having fixed a \(2-\)dimensional dyadic partition
of the unity \(\{\varrho_{j}\}_{j \geqslant -1}\) (see
\cite{BahouriCheminDanchin2011FourierAndNonLinPDEs}), the
spaces \(\mB^{\alpha}_{p,q}(\TT^{2}; \RR^{d})\) are defined via the norms:
\[ \| \varphi \|_{\mB^{\alpha}_{p,q}(\TT^{2}; \RR^{d})} = \bigg( \sum_{i
\geqslant -1} 2^{i \alpha q} \| \Delta_{i} \varphi \|_{L^{p}(\TT^{2}; \RR^{d})}^{q}
\bigg)^{\frac{1}{q}} \;,\]
with the Paley block $ \Delta_{i} \varphi $ defined in Section~\ref{sec:para}
below.
In particular we will distinguish the Bessel potential spaces, corresponding to $ p = q
= 2 $:
\[ H^{\alpha}(\TT^{2}; \RR^{d}) = \mB^{\alpha}_{2,2}(\TT^{2}; \RR^{d}) \;,\]
over which we will use the equivalent norm (recall that we are only considering
mean-free functions) 
\begin{equ}
   \| \varphi \|_{H^{\alpha}} = \| (- \Delta)^{\frac{\alpha}{2}} \varphi \| \;.
\end{equ}
Next we also distinguish the H\"older-Besov spaces
\[ \mC^{\alpha}(\TT^{2}; \RR^{d}) = \mB^{\alpha}_{\infty,
\infty}(\TT^{2}; \RR^{d}) \;, \qquad \mC^{\alpha}_{p} (\TT^{2}; \RR^{d}) =
\mB^{\alpha}_{p, \infty}(\TT^{2}; \RR^{d}) \;. \] 
For time-dependent functions we consider the space of Schwartz functions
\begin{equs}
   \mS(\RR \times \TT^{2}; O) = \Big\{ \varphi \colon &\RR \times  \TT^{2} \to O
 \ \colon \\
 &\sup_{t \in \RR, x \in \TT^{2}} \{ (1 + |t|)^{p} |
\partial^{\mu} \varphi |(t,x) \} < \infty, \ \forall p \geqslant 0, \mu \in
\NN^{3} \Big\}\;,
\end{equs}
and its topological dual \(\mS^{\prime}(\RR \times \TT^{2})\), the space of
Schwartz distributions.
For time-dependent measurable functions \(\varphi \colon [0,t] \to X\) for some
\(t>0\) and a Banach space \(X\) we introduce the spaces \(L^{p}_{t} X\),
for \(p \in [1, \infty]\) via the norm
\[ \| \varphi \|_{L^{p}_{t} X} = \bigg( \int_{0}^{t} \| \varphi (s)
\|_{X}^{p} \ud s \bigg)^{\frac{1}{p}}.\]

\subsection{Paraproducts}\label{sec:para}
Next consider, for \( \varphi \in \mS^{\prime} (\TT^{2}; \RR^{d}),\) the Paley
block
\[ \Delta_{i} \varphi(x) = \mF^{-1} \big( \varrho_{j}(\cdot) \mF \varphi
(\cdot) \big)(x) \;,\] 
as well as the paraproducts (the sum is only formal and its convergence has to be justified), for \( \varphi, \psi \in \mS^{\prime}
(\TT^{2}; \RR^{d}) \):
\begin{equs}
\varphi \para \psi (x) &= \sum_{ -1 \leqslant j \leqslant i -1}
\Delta_{j} \varphi (x) \sotimes \Delta_{i} \psi (x) \in \bM^{d} \;,\\
\varphi \reso \psi (x) & = \sum_{|i - j| \leqslant 1} \Delta_{i} \varphi (x)
\sotimes  \Delta_{j} \psi (x) \in \bM^{d} \;.
\end{equs}
So one can formally decompose the tensor product between two distributions
\( \varphi, \psi \) as:
\begin{equ}
\varphi \sotimes \psi  = \varphi \para \psi + \varphi\reso \psi +
\varphi \rpara \psi \;.
\end{equ}
In the hope that no confusion can occur, we will slightly abuse of the notation of
paraproducts, allowing it to denote both \emph{tensor products} as the one we
just described, and \emph{matrix multiplication} (which is just a contraction
along some index of the former). In particular, we will consider the situation
in which we are give a matrix $ M \in \mS^{\prime} (\TT^{2}; \mathbf{M}^{2}) $ and a
vector $ \varphi \in \mS^{\prime} (\TT^{2}; \RR^{2}) $. In this case we write
\begin{equ}
   (M \rpara \varphi)_{i} = \sum_{j =1}^{2} M_{i, j} \rpara \varphi_{j} \;,
   \qquad (\varphi \para M)_{i} = \sum_{j = 1}^{2} \varphi_{j} \para M_{j, i} \;,
\end{equ}
and similarly all other paraproducts. Note that in this definition $
\varphi \para M$ is \emph{not} the same as $ M \rpara \varphi $. Similarly, for
two matrices $ M, N \in \mS^{\prime} (\TT^{2}; \mathbf{M}^{2}) $ we define
\begin{equ}
   (M \para N)_{i, j}  = \sum_{k = 1}^{2} M_{i, k} \para N_{k, j}\;.
\end{equ}
The following lemma collects the fundamental estimates on paraproducts that we
will make use of: these hold both for vector-valued and matrix-valued
distributions.
\begin{lemma}[Theorems 2.82 and 2.85 \cite{BahouriCheminDanchin2011FourierAndNonLinPDEs}]\label{lem:parap}
  Fix \(\alpha, \beta \in \RR\) and \(p, q \in [1, \infty]\)
  such that \(\frac 1 r = \frac 1 p {+} \frac 1 q \leq 1\). Then uniformly over \(\varphi, \psi \in
  \mS^{\prime}\) 
  \begin{align*}
    \| \varphi \para \psi \|_{\mC^{\alpha}_r} & \lesssim \| \varphi
    \|_{L^{p}} \| \psi \|_{\mC^{\alpha}_{q}} \;, & &  \\ 
    \| \varphi \para \psi \|_{\mC^{\alpha {+} \beta}_{r}} &\lesssim \| \varphi
    \|_{\mC^{\beta}_{p}} \| \psi \|_{\mC^{\alpha}_{q}} \;, \ \ & &  \text{if} \ \ \beta  <
    0 \;, \\
    \| \varphi \reso \psi \|_{\mC^{\alpha {+} \beta}_{r}} & \lesssim \| \varphi
    \|_{\mC^{\beta}_{p}} \| \psi \|_{\mC^{\alpha}_{q}}\;, \ \ & & \text{if} \ \ \alpha
    {+} \beta > 0 \;.
  \end{align*}
\end{lemma}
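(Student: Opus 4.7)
The plan is to follow the classical Bony paraproduct argument, which proceeds by a careful analysis of the Fourier support of each dyadic block and then applies H\"older together with the elementary Bernstein-type bounds on Paley blocks. All three estimates reduce to bookkeeping of geometric sums, and the three conditions on the exponents ($\beta<0$ in (ii), $\alpha+\beta>0$ in (iii), none in (i)) correspond exactly to the convergence of the relevant series.

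First I would record the two spectral facts that drive everything. For the paraproduct $\varphi \para \psi = \sum_{j \leqslant i-1}\Delta_j \varphi \sotimes \Delta_i \psi$, each summand has Fourier support in an annulus of size $\sim 2^i$, so there is a universal $c>0$ such that $\Delta_k(\varphi \para \psi)$ receives contributions only from indices $i$ with $|i-k|\leqslant c$. For the resonant product $\varphi\reso\psi = \sum_{|i-j|\leqslant 1}\Delta_i\varphi \sotimes \Delta_j\psi$, by contrast, each summand has Fourier support in a \emph{ball} of radius $\sim 2^i$, so $\Delta_k(\varphi\reso\psi)$ picks up all $i\gtrsim k-c$. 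In addition, $\|\Delta_j\varphi\|_{L^p}\lesssim \|\varphi\|_{L^p}$ and $\|S_{i-1}\varphi\|_{L^p}=\|\sum_{j\leqslant i-1}\Delta_j\varphi\|_{L^p}\lesssim\|\varphi\|_{L^p}$ uniformly in $i,j$, by Young's inequality applied to the Fourier multipliers. Matrix versus vector versus scalar paraproducts are handled componentwise.

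For bound (i), using H\"older with $1/r = 1/p + 1/q$ and the above support analysis,
\begin{equs}
\| \Delta_k(\varphi \para \psi) \|_{L^r}
\lesssim \sum_{|i-k|\leqslant c} \| S_{i-1}\varphi \|_{L^p} \| \Delta_i \psi \|_{L^q}
\lesssim \| \varphi \|_{L^p}\, 2^{-k\alpha}\|\psi\|_{\mC^\alpha_q},
\end{equs}
and multiplying by $2^{k\alpha}$ and taking the supremum in $k$ gives the claim. For bound (ii), the improvement is that the geometric series
\begin{equs}
\| S_{i-1}\varphi \|_{L^p} \leqslant \sum_{j\leqslant i-1} 2^{-j\beta}\,(2^{j\beta}\|\Delta_j\varphi\|_{L^p})
\lesssim 2^{-i\beta}\|\varphi\|_{\mC^\beta_p}
\end{equs}
converges precisely when $\beta<0$; combining with $\|\Delta_i\psi\|_{L^q}\lesssim 2^{-i\alpha}\|\psi\|_{\mC^\alpha_q}$ and summing over $|i-k|\leqslant c$ yields $\|\Delta_k(\varphi\para\psi)\|_{L^r}\lesssim 2^{-k(\alpha+\beta)}\|\varphi\|_{\mC^\beta_p}\|\psi\|_{\mC^\alpha_q}$.

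For bound (iii), the ball support forces
\begin{equs}
\| \Delta_k(\varphi \reso \psi) \|_{L^r}
\lesssim \sum_{i\geqslant k-c}\|\Delta_i\varphi\|_{L^p}\|\Delta_i\psi\|_{L^q}
\lesssim \|\varphi\|_{\mC^\beta_p}\|\psi\|_{\mC^\alpha_q}\sum_{i\geqslant k-c}2^{-i(\alpha+\beta)},
\end{equs}
and the geometric series converges exactly under the hypothesis $\alpha+\beta>0$, giving the advertised bound after multiplication by $2^{k(\alpha+\beta)}$. The only genuine obstacle is the spectral support analysis in the resonant case — the distinction between an annulus (paraproduct) and a ball (resonant product) is the sole reason why $\reso$ needs the positivity condition $\alpha+\beta>0$ whereas $\para$ is essentially unconditional. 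Everything else is routine, and the proof is spelled out in Theorems 2.82 and 2.85 of \cite{BahouriCheminDanchin2011FourierAndNonLinPDEs}.
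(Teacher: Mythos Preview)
Your proposal is correct and follows the standard Bony paraproduct argument; the paper itself does not give a proof of this lemma but simply cites Theorems~2.82 and~2.85 of \cite{BahouriCheminDanchin2011FourierAndNonLinPDEs}, whose proofs proceed exactly along the lines you sketch.
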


\end{document}